\theoremstyle{plain}
\newtheorem{thm}{Theorem}[section]
\newtheorem{cor}[thm]{Corollary}
\newtheorem{lem}[thm]{Lemma}
\newtheorem{prop}[thm]{Proposition}
\theoremstyle{definition}
\newtheorem{ex}{Example}[section]
\newtheorem{defn}{Definition}[section]
\theoremstyle{remark}
\newtheorem{rmk}{Remark}
\newcommand{\numberset}{\mathbb}
\newcommand{\N}{\numberset{N}}
\newcommand{\R}{\numberset{R}}
\newcommand{\Q}{\numberset{Q}}
\newcommand{\E}{\numberset{E}}
\definecolor{codegreen}{rgb}{0,0.6,0}
\definecolor{codegray}{rgb}{0.5,0.5,0.5}
\definecolor{codepurple}{rgb}{0.58,0,0.82}
\definecolor{backcolour}{rgb}{0.95,0.95,0.92}
\lstdefinestyle{mystyle}{
    backgroundcolor=\color{backcolour},
    commentstyle=\color{codegreen},
    keywordstyle=\color{magenta},
    numberstyle=\tiny\color{codegray},
    stringstyle=\color{codepurple},
    basicstyle=\ttfamily\small,
    breakatwhitespace=false,
    breaklines=true,
    captionpos=b,
    keepspaces=true,
    numbers=left,
    numbersep=5pt,
    showspaces=false,
    showstringspaces=false,
    showtabs=false,
    tabsize=2
}
\begin{document}

\title{Multidimensional specific relative entropy between continuous martingales}

\author[J.\ Backhoff]{Julio Backhoff}

\thanks{
JB acknowledges support by the FWF through projects Y 00782 and P 36835.
}

\author[E.K.\ Bellotto]{Edoardo Kimani Bellotto}

\maketitle

\begin{abstract} 

In continuous time, the laws of martingales tend to be singular to each other. Notably, N.\ Gantert introduced the concept of \emph{specific relative entropy} between real-valued continuous martingales, defined as a scaling limit of finite-dimensional relative entropies, and showed that this quantity is non-trivial despite the aforementioned mutual singularity of martingale laws.

Our main mathematical contribution is to extend this object, originally restricted to one-dimensional martingales, to multiple dimensions. Among other results, we establish that Gantert's inequality, bounding the specific relative entropy with respect to Wiener measure from below by an explicit functional of the quadratic variation, essentially carries over to higher dimensions. We also prove that this lower bound is tight, in the sense that it is the convex lower semicontinuous envelope of the specific relative entropy. This is a novel result even in dimension one. Finally we establish closed-form expressions for the specific relative entropy in simple multidimensional examples.
 
\bigskip

\noindent\emph{Keywords:} Martingales, relative entropy, specific relative entropy, Ganter's inequality.

\smallskip

\noindent\emph{Mathematics Subject Classification (2010):} Primary XXX; Secondary XXX.
\end{abstract}

\section{Introduction}
 Recall that if $\mu$ and $\nu$ are probability measures on the same measurable space, then
    the relative entropy between $\mu$ and $\nu$ is defined as
    \[H(\mu|\nu):=\begin{cases}
        \int \log \left(\frac{d\mu}{d\nu}\right )d\mu \, \text{ if } \mu \ll \nu \\
        +\infty \, \text{ otherwise.}
    \end{cases} \]
     This object, although ubiquitous in mathematics, becomes trivial in the case when $\mu$ and $\nu$ are martingale laws in continuous time. Indeed, if e.g.\  $\mu$ is the law of a standard Brownian motion $B$, and $\nu$ is the law of  $2B$, then $\mu$ and $\nu$ are mutually singular (as they can be shown to be concentrated on disjoint sets) and hence their relative entropy is infinite. On the other hand observe that, in this same example, if $\mu_n$ and $\nu_n$ denotes respectively the law of $B$ and $2B$ restricted to times of the form $\{\frac{k}{n}: k=0,\dots,n\}$,  then $\mu_n$ and $\nu_n$ are in fact equivalent to each other and have a finite relative entropy which scales like $n$, as a few computations reveal. This is the motivation behind Gantert's definition of specific relative entropy in \cite{Ga91}, which we now recall. 

    Consider the (filtered) Wiener space $(C,\mathcal F,\{\mathcal{F}_t\}_{t\in [0,1]})$, where $C:=C([0,1])=\{ \omega : [0,1] \rightarrow \R \, |  \, \omega \, \text{ is continuous}\}$, $\mathcal F=\mathcal F_1$, and $\mathcal{F}_t:=\sigma(X_s : s \in [0,t])$ with $X=(X_t)_{t \in [0,1]}$ the canonical process, i.e. $$X_t(\omega)=\omega(t).$$ We define for all $n\in \mathbb{N}$ 
    \[\textstyle \mathcal{F}^n:=\sigma \left(X_{\frac{k}{n}}:k=0,1,\dots,n\right),\]
    and if $\mathbb P,\mathbb Q$ are probability measures on $(C,\mathcal{F})$ we denote by $H(\mathbb{Q}|\mathbb{P})|_{\mathcal{F}^n}$ the relative entropy between the restrictions of $\mathbb{Q}$ and $\mathbb{P}$ to the sub-sigma algebra $\mathcal F^n$. Equivalently, $H(\mathbb{Q}|\mathbb{P})|_{\mathcal{F}^n}$ is the relative entropy between the image measures of $\mathbb{Q}$ and $\mathbb{P}$ by the map
    $C\ni\omega\mapsto (\omega_{k/n})_{k=0}^n$.

With these objects at hand, we can introduce Gantert's specific relative entropy: 
 Let $\mathbb{Q},\mathbb{P} $ be probability measures on $(C,\mathcal{F})$ such that $X$ is a martingale under both of them.  Then the specific relative entropy between $\mathbb{Q}$ and $\mathbb{P}$ is  \[h_1(\mathbb{Q}|\mathbb{P}):=\liminf _{n \rightarrow \infty} 2^{-n}H(\mathbb{Q}|\mathbb{P})|_{\mathcal{F}^{2^n}}\]
 Crucially, this definition comes with an intriguing lower bound. To wit,  in \cite[Satz 1.3]{Ga91} Gantert observed that, if $X$ is a martingale and has an absolutely continuous quadratic variation under $\mathbb Q$ (say, $d\langle X \rangle_t/dt=\sigma^2_t$, $\mathbb Q$-a.s.), then 
 \begin{equation}\label{eq:Gantert_h} \textstyle 
     h_1(\mathbb{Q}|\mathbb{B}^1)\geq \mathbb{E}_{\mathbb Q}\left[\int_0^1 F_1(\sigma^2) dt\right ],
 \end{equation}
 where $\mathbb B^1$ denotes the standard one-dimensional Wiener measure and  
 \begin{equation}
     F_1(x):=\frac{1}{2}\{x-1-\log(x)\}.
 \end{equation}
 The subscript $1$ in $h_1$ resp.\ $F_1$ is there to stress that these objects pertain to one-dimensional martingales resp.\ real numbers.  Inequality \eqref{eq:Gantert_h} has been suitably extended by F\"ollmer to the case where the quadratic variation is not necessarily absolutely continuous \cite{Fo22a,Fo22b}. We will refer to \eqref{eq:Gantert_h} as \emph{Gantert's inequality}. 
 
 It can be argued that the r.h.s.\ of \eqref{eq:Gantert_h} is the most mathematically appealing. For instance, it can be more easily computed than the l.h.s. It can also happen that the l.h.s.\ is infinite while the r.h.s.\ is not; e.g.\ \cite{BeBe23} exhibit this with a continuous martingale which terminates in a Bernoulli distribution. Finally, it can be proved that \eqref{eq:Gantert_h} is an equality in a number of interesting situations (e.g.\ when $\mathbb Q$ is a concrete martingale, such as geometric Brownian motion, or a martingale diffusion with very a regular diffusion coefficient; see \cite{Ga91,BaUn22}). We will add yet another reason in favor of the r.h.s.\ of \eqref{eq:Gantert_h} in our Theorem \ref{thm:hull}, where we show that this lower bound is precisely the convex lower semicontinuous envelope of the l.h.s.\\

 \noindent \textbf{Applications of the specific relative entropy thus far:}
The specific relative entropy $h_1$ between continuous martingales appears in a number of applications in stochastic analysis and finance. 

Gantert \cite{Ga91} shows that $h_1$ is the rate function for the large deviations in a randomized version of Donsker's theorem. In \cite{Fo22a,Fo22b} F\"ollmer generalizes Talagrand's inequality for the Wiener measure to the setting of martingales by using $h_1$. 

In the setting of model calibration in finance, the work \cite{AvFrHoSa97} by Avellaneda et al proposes $h_1$ as a loss function, motivated by considering a sequence of discrete time trinomial models. A similar, but more precise analysis appears in \cite[Ch. 6]{CaGo07} by Cattiaux and Gozlan, involving large deviation techniques. In \cite{BeChLo24}, Benamou, Chazareix and Loeper obtain specific relative entropy minimization problems as a limit to discrete time counterparts. Beyond model calibration, Cohen and Dolinsky \cite{CoDo22} find that $h_1$ appears as a limit of scaled indifference pricing problems involving exponential utility functions. Different problems of entropy  minimization over a class of martingales are \cite{DM18,DMHL19} by De March et al, \cite{He19} by Henry-Labord\`ere, \cite{NuWi24} by Nutz and Wiesel, or \cite{ChCoReWa24} by Chen et al.

In \cite{BeBe23} the authors provide an answer to an open question by D.\ Aldous \cite{Al22a,Al22b}, concerning the binary prediction market which is most random, by minimizing the r.h.s.\ of \eqref{eq:Gantert_h} among all martingales finishing in a given Bernoulli distribution. See also \cite{GuPoRe23} for a concurrent work tackling essentially the same question, but where the $dt$-integral appearing in the r.h.s.\ of \eqref{eq:Gantert_h} runs until an exit time. It is possible to apply scaling limits to other divergences between time-discretized martingales other than the relative entropy; see \cite{BaZh24} for the case of the specific Wasserstein divergence.
\\

We now proceed to describe our contributions.

 \subsection{Multidimensional specific relative entropy}

 We identify the multivariate / multidimensional analogue to the specific relative entropy. Hence $X$ denotes now the canonical process on the space of continuous $\R^l$-valued paths. Suppose $\mathbb Q,\mathbb P$ are given, and such that $X$ is a martingale under both of these probability measures. We define their specific relative entropy just as in the one-dimensional case, i.e.
 \[h_l(\mathbb{Q}|\mathbb{P}):=\liminf _{n \rightarrow \infty} 2^{-n}H(\mathbb{Q}|\mathbb{P})|_{\mathcal{F}^{2^n}}.\]
 Our first result is to obtain the analogue to the bound \eqref{eq:Gantert_h} in multiple dimensions. Henceforth we denote by $\mathbb B^l$ the standard Wiener measure in dimension $l$, i.e.\ the law of standard $\R^l$-valued Brownian motion, and \begin{equation}
     F_l(\Sigma):=\frac{1}{2}\{\text{tr}(\Sigma)-l-\log(\text{det}(\Sigma))\},
 \end{equation}
 with tr denoting trace of a matrix and det denoting determinant.
 \begin{thm}[Gantert's inequality in $\mathbb  R^l$]\label{thm:gantert_ineq}
     Suppose $X$ is a square integrable $\mathbb Q$-martingale admitting an absolutely continuous quadratic covariation $d\langle X\rangle_t/dt = \Sigma^{\mathbb Q}_t$ and $X_0=0$ ($\mathbb Q$-a.s.). Then
      \begin{equation}\label{eq:Gantert_h_Rl} \textstyle 
     h_{l}(\mathbb{Q}|\mathbb{B}^l)\geq \mathbb{E}_{\mathbb Q}\left[\int_0^1 F_l(\Sigma_t^{\mathbb Q}) dt\right ].
 \end{equation}
 \end{thm}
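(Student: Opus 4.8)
The plan is to reduce the multidimensional bound to a dyadic-increment computation, exactly as in Gantert's original one-dimensional argument, and then to control the resulting conditional expectations via Jensen's inequality together with the convexity properties of $F_l$. Write $t_k^n := k/2^n$ for $k=0,\dots,2^n$ and $\Delta_k^n X := X_{t_{k+1}^n} - X_{t_k^n}$. First I would use the chain rule for relative entropy to decompose
\begin{equation}
H(\mathbb Q|\mathbb B^l)|_{\mathcal F^{2^n}} = \sum_{k=0}^{2^n-1} \mathbb E_{\mathbb Q}\!\left[ H\!\left( \mathbb Q(\Delta_k^n X \in \cdot \mid \mathcal F_{t_k^n}) \,\middle|\, \mathbb B^l(\Delta_k^n X \in \cdot \mid \mathcal F_{t_k^n}) \right) \right],
\end{equation}
using that under $\mathbb B^l$ the increment $\Delta_k^n X$ is independent of $\mathcal F_{t_k^n}$ and distributed as $N(0, 2^{-n} I_l)$. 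Crucially, since $X$ is a $\mathbb Q$-martingale with $X_0=0$, the conditional law $\mathbb Q(\Delta_k^n X \in \cdot \mid \mathcal F_{t_k^n})$ is a probability measure on $\mathbb R^l$ with mean zero.

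The next step is a pointwise (in $\omega$ and $k$) lower bound on the inner relative entropy. For any centered probability measure $m$ on $\mathbb R^l$ with covariance matrix $A$, one has $H(m \,|\, N(0,\tau I_l)) \ge H(N(0,A) \,|\, N(0,\tau I_l))$; this is the standard maximum-entropy fact that among centered laws with fixed covariance the Gaussian minimizes relative entropy to any centered Gaussian, and it follows from a short computation since $-\int \log(dN(0,\tau I_l)/dx)\, dm$ depends on $m$ only through its covariance. A direct evaluation gives
\begin{equation}
H(N(0,A)\,|\,N(0,\tau I_l)) = \tfrac12\left\{ \tfrac{1}{\tau}\mathrm{tr}(A) - l - \log\det(A) + l\log\tau \right\}.
\end{equation}
Applying this with $\tau = 2^{-n}$ and $A = A_k^n(\omega) := \mathbb E_{\mathbb Q}[\Delta_k^n X (\Delta_k^n X)^\top \mid \mathcal F_{t_k^n}]$, and noting that $\sum_k \mathbb E_{\mathbb Q}\mathrm{tr}(A_k^n) = \mathbb E_{\mathbb Q}|X_1|^2$ while $\sum_k \mathbb E_{\mathbb Q}[-l - l\log 2^{-n} \cdot(\text{counting term})]$ collapses, I would arrive at
\begin{equation}
2^{-n} H(\mathbb Q|\mathbb B^l)|_{\mathcal F^{2^n}} \ge \mathbb E_{\mathbb Q}\!\left[ \sum_{k=0}^{2^n-1} 2^{-n}\, F_l\!\left( 2^n A_k^n \right) \right],
\end{equation}
after checking that the trace and dimensional constants recombine into $F_l$ evaluated at the normalized conditional covariance $2^n A_k^n$.

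It remains to pass to the $\liminf$ and identify the right-hand side with $\mathbb E_{\mathbb Q}[\int_0^1 F_l(\Sigma_t^{\mathbb Q})\,dt]$. The quantity $2^n A_k^n$ is the normalized conditional second moment of the increment, and $\frac{1}{2^n}\sum_{k} (2^n A_k^n)\mathbf 1_{[t_k^n,t_{k+1}^n)}$ should be thought of as a discrete approximation of $d\langle X\rangle_t/dt = \Sigma_t^{\mathbb Q}$; more precisely $A_k^n = \mathbb E_{\mathbb Q}[\langle X\rangle_{t_{k+1}^n} - \langle X\rangle_{t_k^n}\mid \mathcal F_{t_k^n}]$. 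Since $F_l$ is convex on the cone of positive semidefinite matrices (the map $\Sigma \mapsto -\log\det\Sigma$ is convex and $\mathrm{tr}$ is linear), I would apply the conditional Jensen inequality in the form $F_l(\mathbb E[\,\cdot\mid \mathcal F_{t_k^n}]) \le \mathbb E[F_l(\cdot)\mid \mathcal F_{t_k^n}]$ to replace $F_l(2^n A_k^n)$ by a genuine time-average of $F_l(\Sigma_t^{\mathbb Q})$ over $[t_k^n, t_{k+1}^n)$, after writing $2^n A_k^n = \mathbb E_{\mathbb Q}[2^n\int_{t_k^n}^{t_{k+1}^n}\Sigma_s^{\mathbb Q}\,ds \mid \mathcal F_{t_k^n}]$. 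Summing over $k$ and taking $n\to\infty$, the Riemann sum $\sum_k 2^{-n}\,\mathbb E_{\mathbb Q}[\text{average of }F_l(\Sigma^{\mathbb Q})\text{ on the $k$-th interval}]$ converges to $\mathbb E_{\mathbb Q}[\int_0^1 F_l(\Sigma_t^{\mathbb Q})\,dt]$ by Fubini and the definition of the $dt$-integral, which yields \eqref{eq:Gantert_h_Rl}.

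The main obstacle I anticipate is the integrability/degeneracy bookkeeping in the final limiting step: $F_l$ is only bounded below by an affine function of the trace (via $-\log\det \le -l + \mathrm{tr}/\text{something}$-type estimates are the wrong direction), so one must be careful that $\mathbb E_{\mathbb Q}[F_l(2^n A_k^n)]$ is well-defined and that the $\log\det$ terms do not conspire to produce $-\infty$ against the $\liminf$; handling the case where $\Sigma_t^{\mathbb Q}$ is degenerate (so $\log\det = -\infty$ and the bound is vacuously $+\infty \ge$ something, or must be interpreted correctly) requires a truncation argument. A secondary technical point is justifying the maximum-entropy comparison $H(m|N(0,\tau I)) \ge H(N(0,A)|N(0,\tau I))$ when $m$ fails to have a density or a finite second moment — but on the event where the conditional second moment is infinite the left side is $+\infty$, so this is harmless, and square-integrability of $X$ under $\mathbb Q$ guarantees finiteness $\mathbb Q$-a.s. after summing.
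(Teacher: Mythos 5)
Your overall strategy coincides with the paper's: the maximum-entropy comparison $H(m\,|\,\mathcal N(0,\tau\mathcal I))\ge H(\mathcal N(0,A)\,|\,\mathcal N(0,\tau\mathcal I))$ for centered $m$ with covariance $A$ is exactly what the paper implements by introducing the intermediate measure $\mathbb Q^n$ whose increments are conditionally Gaussian with matched conditional covariances, and then discarding $H(\mathbb Q|\mathbb Q^n)|_{\mathcal F^n}\ge 0$. However, as written your plan has two genuine gaps, both of which are direction-of-inequality problems.

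First, your opening decomposition conditions the increments on the full filtration $\mathcal F_{t_k^n}$ and asserts equality. The chain rule for $H(\mathbb Q|\mathbb B^l)|_{\mathcal F^{2^n}}$ (Lemma \ref{lem1.3}) conditions on the \emph{discrete} past $\sigma(X_{t_0^n},\dots,X_{t_k^n})$; replacing it by the finer $\mathcal F_{t_k^n}$ turns the equality into ``$\le$'', since $\mathcal L_{\mathbb Q}(\Delta_k^nX\mid \text{discrete past})$ is the conditional average of $\mathcal L_{\mathbb Q}(\Delta_k^nX\mid\mathcal F_{t_k^n})$, the Brownian reference law is the same in both cases, and relative entropy is jointly convex (this is precisely the Jensen step in the proof of Lemma \ref{lem:equ_case_delay}, where it is used for the opposite inequality). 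Since you need a \emph{lower} bound on $H(\mathbb Q|\mathbb B^l)|_{\mathcal F^{2^n}}$, your subsequent lower bounds cannot be chained onto a ``$\le$''. The fix is to condition on the discrete past throughout, so that $A_k^n=\mathbb E_{\mathbb Q}\bigl[\int_{t_k^n}^{t_{k+1}^n}\Sigma_s^{\mathbb Q}\,ds\mid X_{t_0^n},\dots,X_{t_k^n}\bigr]$; this is exactly the paper's process $G_{2^n}=\mathbb E_{\lambda\otimes\mathbb Q}[G|\mathcal P_{2^n}]$ evaluated on the corresponding dyadic interval.

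Second, the final limit is not a Riemann-sum statement. Jensen applied to the convex $F_l$ gives only the \emph{upper} bound $2^{-n}\sum_k\mathbb E_{\mathbb Q}[F_l(2^nA_k^n)]\le\mathbb E_{\mathbb Q}[\int_0^1F_l(\Sigma_t^{\mathbb Q})\,dt]$, again the wrong direction. To conclude that the $\liminf$ of the left-hand side is $\ge$ the right-hand side one needs (i) the $\lambda\otimes\mathbb Q$-a.e.\ convergence of the discretized densities to $\Sigma^{\mathbb Q}$, which the paper obtains from the convergence theorem for closed (uniformly integrable) martingales because the sigma-algebras $\mathcal P_{2^n}$ are \emph{nested} and increase to the predictable sigma-algebra --- this is exactly where the restriction to dyadic partitions is used --- and (ii) Fatou's lemma together with $F_l\ge 0$. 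Combining the Fatou lower bound with the Jensen upper bound then identifies the limit. Your appeal to ``Fubini and the definition of the $dt$-integral'' skips precisely these ingredients. Your closing worries about degeneracy of $\Sigma^{\mathbb Q}$ are, by contrast, handled automatically: if some $A_k^n$ is singular on a set of positive measure the corresponding entropy term is $+\infty$ and the bound is trivial, and on the complementary event $F_l\ge 0$ makes Fatou applicable without truncation.
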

 We remark that the inequality is still true if $\mathbb Q$ has a general starting distribution, as long as $\mathbb{B}^l$ is replaced by the law of $\R^l$-valued Brownian motion with the same starting law. The proof of Theorem \ref{thm:gantert_ineq}, which follows closely the one-dimensional arguments in \cite[Satz 1.3]{Ga91}, is provided in Section \ref{sec:multidim_spec_ent}. We also explore a number of elementary properties of the multidimensional specific relative entropy. Notably,
 \begin{itemize}
     \item In Proposition \ref{thm2.2} we show that \eqref{eq:Gantert_h_Rl} is an equality in the case that $\mathbb Q$ is the law of a Gaussian integral. This is an extension of \cite[Satz 1.2]{Ga91} in the one-dimensional case.
     \item The tensorized lower bound (Lemma \ref{lem:tens_lb}):
     \[ \textstyle  h_l(\mathbb Q| \mathbb P^1\otimes \dots\otimes \mathbb P^l ) \geq \sum_{i=1}^l h_1(\mathbb Q^i|\mathbb P^i),\]
where $\mathbb Q^i$ is the projection of $\mathbb Q$ into its $i$-th spatial component, and each of the $\mathbb P^i$ is a one-dimensional martingale law. This result is in tandem with a tensorization inequality for relative entropies.
 \end{itemize}

\subsection{Tightness of Gantert's inequality}

Our next main result, Theorem \ref{thm:hull}, seems to have no precursor in the literature (even in dimension one). We show that Inequality \eqref{eq:Gantert_h_Rl} is sharp, in the sense that the lower bound therein is maximal among all lower bounds that are proper, convex, lower semicontinuous functions of $\mathbb Q$:
 
 \begin{thm}\label{thm:hull}
     The functional $$\textstyle \mathbb Q\mapsto \mathbb{E}_{\mathbb Q}\left[\int_0^1 F_l(\Sigma^{\mathbb Q}_t) dt\right ],$$
 is the convex lower semicontinuous envelope of the functional $$\mathbb Q\mapsto h_{l}(\mathbb{Q}|\mathbb{B}^l),$$ i.e.\ the former is the largest convex lower semicontinuous minorant of the latter.
 \end{thm}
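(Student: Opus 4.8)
\emph{Set-up and strategy.} Abbreviate $h(\mathbb Q):=h_l(\mathbb Q|\mathbb B^l)$ and $G(\mathbb Q):=\mathbb E_{\mathbb Q}\big[\int_0^1 F_l(\Sigma^{\mathbb Q}_t)\,dt\big]$, with the convention $G(\mathbb Q)=+\infty$ unless $X$ is a square integrable $\mathbb Q$-martingale with $X_0=0$ and absolutely continuous quadratic covariation; both are seen as functions on $\mathcal P(C([0,1];\R^l))$ with the topology of weak convergence. Since $h\ge 0$ and $h(\mathbb B^l)=0=G(\mathbb B^l)$, the convex lower semicontinuous envelope $\overline{\mathrm{co}}\,h$ is proper and equals the biconjugate of $h$ (Fenchel--Moreau), so the plan is to establish the two inequalities $G\le\overline{\mathrm{co}}\,h$ and $\overline{\mathrm{co}}\,h\le G$. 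The outer shell is routine convex analysis; the content is two estimates, one per inequality.

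\emph{The bound $G\le\overline{\mathrm{co}}\,h$.} Gantert's inequality (Theorem \ref{thm:gantert_ineq}) gives $G\le h$, so it is enough to check that $G$ is convex and lower semicontinuous, as it is then one of the convex lsc minorants whose supremum defines $\overline{\mathrm{co}}\,h$. Convexity is immediate: on its effective domain $G(\mathbb Q)$ is the integral against $\mathbb Q$ of the measure-independent pathwise functional $\omega\mapsto\int_0^1 F_l\big(\tfrac{d}{dt}\langle\omega\rangle_t\big)\,dt$ (the dyadic pathwise covariation agreeing a.s.\ with $\langle X\rangle$), hence affine there, and this domain is a convex set of measures. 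Lower semicontinuity is the first real obstacle. For $\mathbb Q^{(n)}\to\mathbb Q$ weakly with $\sup_n G(\mathbb Q^{(n)})<\infty$, the elementary estimate $F_l(\Sigma)\ge\tfrac14\mathrm{tr}(\Sigma)-\tfrac{l}{2}\log 2$ yields $\sup_n\mathbb E_{\mathbb Q^{(n)}}[|X_1|^2]=\sup_n\mathbb E_{\mathbb Q^{(n)}}[\mathrm{tr}\langle X\rangle_1]<\infty$; by Rebolledo's criterion the laws of $\langle X\rangle$ under $\mathbb Q^{(n)}$ are then tight, so along a subsequence the joint laws of $(X,\langle X\rangle)$ converge to some $\mathbb M$ with first marginal $\mathbb Q$. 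I would pass the martingale properties of $X$ and of $XX^\top-\langle X\rangle$ to the limit --- after localizing at the times $\{\,|X|\vee\mathrm{tr}\langle X\rangle\ge R\,\}$, whose probabilities are $O(1/R)$ uniformly in $n$, since $\{|X^{(n)}_t|^2\}$ need not be uniformly integrable --- to identify the second marginal of $\mathbb M$ with the quadratic covariation of its first. Finally, on continuous paths that are nondecreasing in the Loewner order, $A\mapsto\int_0^1 F_l(\dot A_t)\,dt$ is lower semicontinuous, being the supremum over $C^1$ test fields $g$ of the weakly continuous functionals $A\mapsto\int_0^1 g_t\!:\!dA_t-\int_0^1 F_l^*(g_t)\,dt$ (integrate by parts and use $F_l(\Sigma)=\sup_g\{\,g\!:\!\Sigma-F_l^*(g)\,\}$); Portmanteau then gives $G(\mathbb Q)\le\liminf_n G(\mathbb Q^{(n)})$.

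\emph{The bound $\overline{\mathrm{co}}\,h\le G$.} It suffices, for each $\mathbb Q$ with $G(\mathbb Q)<\infty$, to produce $\mathbb Q^{(n)}\to\mathbb Q$ weakly with $\limsup_n h(\mathbb Q^{(n)})\le G(\mathbb Q)$, because $\overline{\mathrm{co}}\,h$ is lsc and dominated by $h$, whence $\overline{\mathrm{co}}(h)(\mathbb Q)\le\liminf_n\overline{\mathrm{co}}(h)(\mathbb Q^{(n)})\le\liminf_n h(\mathbb Q^{(n)})\le G(\mathbb Q)$. First I would reduce to $\mathbb Q$ with $\tfrac1M I\preceq\Sigma^{\mathbb Q}\preceq MI$: representing $X_t=\int_0^{t}(\Sigma^{\mathbb Q}_s)^{1/2}\,dB_s$ with $B$ a Brownian motion and clipping the eigenvalues of $\Sigma^{\mathbb Q}$ at level $M$ yields $\mathbb Q_M\to\mathbb Q$ weakly with $G(\mathbb Q_M)\to G(\mathbb Q)$, since $F_l$ decreases under eigenvalue clipping while $F_l(\Sigma^{\mathbb Q})$ is $\mathbb Q\times dt$-integrable. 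For $\mathbb Q$ of this type, let $\mathbb Q^{(n)}$ be the law obtained by keeping the exact law of the dyadic skeleton $(X_{j2^{-n}})_j$ and interpolating on each block $[j2^{-n},(j+1)2^{-n}]$ by a scaled Brownian bridge of covariance $c^{(n)}_j:=\mathbb E_{\mathbb Q}\big[\,2^{n}\!\!\int_{j2^{-n}}^{(j+1)2^{-n}}\!\Sigma^{\mathbb Q}_s\,ds\ \big|\ \text{skeleton}\,\big]$. This $\mathbb Q^{(n)}$ is again a martingale law, because the conditional mean of a bridge is the linear interpolation of its endpoints and the skeleton carries $\mathbb Q$'s law; and it is conditionally Gaussian on each block, so the chain rule for relative entropy together with the exact identity $H\big(\mathcal N(0,c\Sigma)\,\big|\,\mathcal N(0,cI)\big)=F_l(\Sigma)$ shows that, for $m\ge n$, one has $2^{-m}H(\mathbb Q^{(n)}|\mathbb B^l)|_{\mathcal F^{2^m}}=(2^{-n}-2^{-m})\sum_j\mathbb E_{\mathbb Q^{(n)}}[F_l(c^{(n)}_j)]+2^{-m}\,H(\mathbb Q|\mathbb B^l)|_{\mathcal F^{2^n}}$, the last term being finite precisely by uniform ellipticity; letting $m\to\infty$ gives $h(\mathbb Q^{(n)})=2^{-n}\sum_j\mathbb E_{\mathbb Q^{(n)}}[F_l(c^{(n)}_j)]=G(\mathbb Q^{(n)})$. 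Two uses of Jensen's inequality --- for the conditional expectation defining $c^{(n)}_j$ and for the block time-average, both exploiting convexity of $F_l$ --- give $G(\mathbb Q^{(n)})\le G(\mathbb Q)$. Finally $\mathbb Q^{(n)}\to\mathbb Q$ weakly: dyadic finite-dimensional distributions are exactly those of $\mathbb Q$ once $n$ is large, and tightness follows from the bridge structure and the moment bounds furnished by $\Sigma^{\mathbb Q}\preceq MI$.

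\emph{Main obstacle.} The delicate points are the lower semicontinuity of $G$ in the first bound and the weak convergence of the discretized measures in the second. For the former, the crux is that weak convergence of martingale laws must neither create nor destroy covariation mass, i.e.\ the second marginal of the limit $\mathbb M$ must be exactly the covariation of the first; this is what forces the $L^2$-bound, the Rebolledo-type tightness and the localization argument. For the latter, the relative-entropy telescoping only functions when $\mathbb Q^{(n)}$ is conditionally Gaussian block by block, so the approximation is rigid; the passage to uniformly elliptic $\Sigma^{\mathbb Q}$ is exactly what keeps the finite-dimensional relative entropies finite and the $\mathbb Q^{(n)}$ genuine martingale laws, and is therefore load-bearing rather than a convenience.
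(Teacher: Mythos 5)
Your overall architecture is exactly the paper's: write the envelope as the biconjugate, get one inequality from Gantert's bound plus convexity and lower semicontinuity of $G=L_l$, and get the other by approximating any $\mathbb Q$ with $L_l(\mathbb Q)<\infty$ by measures on which $h_l$ can be computed and bounded by $L_l(\mathbb Q)$ via Jensen. However, the key construction in your second half has a genuine gap: the measure $\mathbb Q^{(n)}$ obtained by keeping the dyadic skeleton of $\mathbb Q$ and filling in each block with a scaled Brownian bridge is \emph{not} a martingale measure in general. The conditional mean of a bridge given its two endpoints is indeed the linear interpolation, which gives the martingale property along the coarse skeleton filtration; but for $s$ strictly inside a block one has
\begin{equation*}
\mathbb E[X_t\mid\mathcal F_s]=X_s+\tfrac{t-s}{(j+1)2^{-n}-s}\bigl(\mathbb E[X_{(j+1)2^{-n}}\mid\mathcal F_s]-X_s\bigr),
\end{equation*}
and $\mathbb E[X_{(j+1)2^{-n}}\mid\mathcal F_s]\neq X_s$ unless the conditional law of the endpoint is Gaussian with exactly the bridge's covariance (e.g.\ if that conditional law is supported on two points, $X_s$ can leave its convex hull while the conditional expectation cannot). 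Since the envelope in the theorem is taken over martingale measures, $h_l(\mathbb Q^{(n)})$ is not available and the chain $\overline{\mathrm{co}}\,h(\mathbb Q)\le\liminf_n h(\mathbb Q^{(n)})\le G(\mathbb Q)$ collapses; your telescoping entropy identity and the two Jensen steps are fine, but they are computed for an object outside the domain. The paper avoids this by approximating instead with laws of $M_t=M_0+\int_0^t\sigma_s\,dB_s$ where $\sigma_s$ is $\mathcal F_{(s-1/n)\vee 0}$-measurable (the class $\mathcal M^2_l(n)$, reached after regularizing $\Sigma^{\mathbb Q}$ from below, above, and in time): these are genuine martingales whose dyadic increments are conditionally Gaussian given $\mathcal F_{k2^{-n}}$, and for them Gantert's inequality is an equality (Lemma \ref{lem:equ_case_delay}). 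Your uniform-ellipticity reduction is the analogue of the paper's Steps 1--2, so the fix is to replace the bridge interpolation by the delayed-volatility construction.

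Two smaller points. First, your lower-semicontinuity argument for $G$ via the dual representation $\int_0^1 F_l(\dot A_t)\,dt=\sup_g\{\int g_t\!:\!dA_t-\int F_l^*(g_t)\,dt\}$ over continuous test fields only yields lower semicontinuity of the \emph{relaxed} functional, which on a limit whose quadratic covariation has a singular part assigns the finite recession value $\tfrac12\mathrm{tr}$ rather than $+\infty$ (since $F_l$ has only linear growth in the trace direction); so as written it does not establish lsc of $G$ with the convention $G=+\infty$ off the absolutely continuous class. The paper outsources exactly this point to an external result (\cite{BaPa22}) after an $\epsilon$-regularization of $\log\det$. Second, your reduction to $\tfrac1M\mathcal I\preceq\Sigma^{\mathbb Q}\preceq M\mathcal I$ only describes the upper clipping; the lower bound needs its own approximation (the paper perturbs by $\epsilon B_t$), though this is easily supplied.
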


It should be stressed that  $\mathbb Q\mapsto \mathbb{E}_{\mathbb Q}\left[\int_0^1 F_l(\Sigma^{\mathbb Q}_t) dt\right ]$ is an affine function. Indeed, one can identify an adapted matrix-valued process $\hat \Sigma$ such that, for each $\mathbb Q$ as in Theorem \ref{thm:gantert_ineq}, we have $\mathbb Q(\text{Leb}\{t\in[0,1]:\Sigma_t^{\mathbb Q}\neq\hat \Sigma_t\})=0$; see Remark \ref{rem:Karandakar}. 

In the authors' opinion, the previous theorem is the main reason why the functional in the r.h.s.\ of Gantert's inequality is the superior mathematical object. This point of view has been also defended and put to practice in \cite{BeBe23} in order to determine the most random prediction market. The latter pertains to a binary prediction market, i.e.\ where at a final date a binary outcome is revealed. A generalization to higher dimensions (corresponding to prediction markets where multiple outcomes can take place at a terminal time) is being carried out in \cite{BaWaZh24}, based on the idea of minimizing the r.h.s.\ of \eqref{eq:Gantert_h_Rl} over a class of suitable multidimensional martingales.

 \subsection{Closed-form expressions}

 For mathematical finance, and beyond, it is desirable to explicitly determine the specific relative entropy of a Black-Scholes model (i.e.\ geometric Brownian motion) with respect to Brownian motion, or between two Black-Scholes models with different parameters. Indeed, the lack of closed-form formulae in these cases would arguably make the concept of specific relative entropy irrelevant for some if not most applications. As it turns out, the multidimensional specific relative entropy takes an explicit form in these examples.

 We recall that a martingale Black-Scholes model $M^{\Gamma}$ is the solution to the SDE
 $dM_t^{\Gamma}:=diag(M_t^{\Gamma})\Gamma dB_t$, equivalently  $dM_t^i:=M_t^i(\sum_{k=1}^l\Gamma_{ik} d B_t^k)$ for all $i=1,\dots,l$, with $B$ a standard $\R^l$-valued Brownian motion. We take $\Gamma$ to be a constant matrix; an extension to time-dependent $\Gamma_t$ would be achievable. 
\begin{lem}\label{lem:Black-Scholes_vs_BM_intro}
{The specific relative entropy between a $\R^l$-valued martingale Black-Scholes models with parameter $\Gamma\in \mathbb{R}^{l \times l}$ and Brownian motion, both starting at the same point, is given by:
\begin{equation} \begin{split} \label{srE_GBM_BM_l} 
h_l(\mathcal L(M^{\Gamma})|\mathbb{B}^l)=\textstyle \frac{1}{2}\Big(\sum_{i=1}^l \big(e^{\sum_{k=1}^l \Gamma_{ik}^2}-1\big)-l+\frac{1}{2} tr(\Gamma \Gamma^T) - \log \big(\det (\Gamma \Gamma^T) \big)\Big).
\end{split}
\end{equation}}
\end{lem}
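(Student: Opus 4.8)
The plan is to establish \eqref{srE_GBM_BM_l} as a pair of matching bounds: the lower bound will come from Theorem \ref{thm:gantert_ineq}, the upper bound from a direct evaluation of the discretised relative entropies, which will moreover reveal that the $\liminf$ defining $h_l$ is a genuine limit. Write $\mathbb Q:=\mathcal L(M^\Gamma)$ and take the common starting value to be $\mathbf 1=(1,\dots,1)$. Reading off the SDE, the quadratic covariation of $M^\Gamma$ is absolutely continuous with density $\Sigma^{\mathbb Q}_t=\mathrm{diag}(M^\Gamma_t)\,\Gamma\Gamma^T\,\mathrm{diag}(M^\Gamma_t)$, and $M^\Gamma$ is a square-integrable martingale on $[0,1]$; thus the version of Theorem \ref{thm:gantert_ineq} allowing a nontrivial initial value (the remark after it) gives $h_l(\mathbb Q|\mathbb B^l)\ge \mathbb E_{\mathbb Q}\big[\int_0^1 F_l(\Sigma^{\mathbb Q}_t)\,dt\big]$. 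To evaluate the right-hand side I would use that each coordinate $M^{\Gamma,i}$ is a geometric Brownian motion, whence $\mathbb E_{\mathbb Q}[(M^{\Gamma,i}_t)^2]=e^{(\Gamma\Gamma^T)_{ii}t}$ and $\mathbb E_{\mathbb Q}[\log((M^{\Gamma,i}_t)^2)]=-(\Gamma\Gamma^T)_{ii}t$; combined with $\mathrm{tr}(\Sigma^{\mathbb Q}_t)=\sum_i (M^{\Gamma,i}_t)^2(\Gamma\Gamma^T)_{ii}$, $\log\det(\Sigma^{\mathbb Q}_t)=\sum_i\log((M^{\Gamma,i}_t)^2)+\log\det(\Gamma\Gamma^T)$, Fubini's theorem and the identity $\int_0^1 a\,e^{at}\,dt=e^{a}-1$, this collapses $\mathbb E_{\mathbb Q}\big[\int_0^1 F_l(\Sigma^{\mathbb Q}_t)\,dt\big]$ to exactly the right-hand side of \eqref{srE_GBM_BM_l}.

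For the matching upper bound I would compute $2^{-n}H(\mathbb Q|\mathbb B^l)|_{\mathcal F^{2^n}}$ directly. Both $M^\Gamma$ and $\R^l$-valued Brownian motion are Markov and start at the same deterministic point, so the chain rule for relative entropy along the grid $\{k/N\}_{k=0}^{N}$ with $N=2^n$ yields
\[ \textstyle 2^{-n}H(\mathbb Q|\mathbb B^l)|_{\mathcal F^{2^n}}=\frac1N\sum_{k=0}^{N-1}\mathbb E_{\mathbb Q}\big[\rho_N(M^\Gamma_{k/N})\big],\qquad \rho_N(x):=H\big(\mathcal L(M^\Gamma_{1/N}\mid M^\Gamma_0=x)\,\big|\,N(x,\tfrac1N I_l)\big), \]
the $\sigma(X_0)$-term being zero and $\rho_N$ independent of $k$ by the time-homogeneity of both processes. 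Conceptually, the one-step increment of $M^\Gamma$ from $x$ over a time-step $\Delta:=1/N$ is, to leading order, $N(0,\Delta\,\Sigma(x))$ with $\Sigma(x):=\mathrm{diag}(x)\Gamma\Gamma^T\mathrm{diag}(x)$, and the Gaussian--Gaussian formula gives $H(N(0,\Delta\Sigma(x))\,|\,N(0,\Delta I_l))=F_l(\Sigma(x))$ --- which is exactly the integrand in \eqref{eq:Gantert_h_Rl}. To turn this into a rigorous estimate I would compute $\rho_N(x)$ exactly: the one-step law of $M^\Gamma$ is explicitly lognormal, $M^\Gamma_{1/N}=\mathrm{diag}(x)\,Y$ with $\log Y\sim N(-\tfrac{\Delta}{2}\mathrm{diag}(\Gamma\Gamma^T),\,\Delta\,\Gamma\Gamma^T)$, so that $\rho_N(x)=-\mathrm{Ent}(\mathrm{diag}(x)Y)+\tfrac{l}{2}\log(2\pi\Delta)+\tfrac{1}{2\Delta}\mathbb E|\mathrm{diag}(x)Y-x|^2$, and both ingredients are available in closed form: $\mathrm{Ent}(\mathrm{diag}(x)Y)=\tfrac{l}{2}\log(2\pi e\Delta)+\tfrac12\log\det(\Gamma\Gamma^T)-\tfrac{\Delta}{2}\mathrm{tr}(\Gamma\Gamma^T)+\sum_i\log|x_i|$ (differential entropy of a lognormal vector, via its affine scaling rule) and $\mathbb E|\mathrm{diag}(x)Y-x|^2=\sum_i x_i^2(e^{\Delta(\Gamma\Gamma^T)_{ii}}-1)$ (using $\mathbb E[Y_i]=1$).

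Putting these together and expanding $e^{\Delta(\Gamma\Gamma^T)_{ii}}-1=\Delta(\Gamma\Gamma^T)_{ii}+O(\Delta^2)$, one gets $\rho_N(x)=F_l(\Sigma(x))+R_N(x)$ with $|R_N(x)|\le C\Delta(1+|x|^2)$ for a constant $C=C(\Gamma,l)$. Since $\sup_{t\in[0,1]}\mathbb E_{\mathbb Q}|M^\Gamma_t|^2<\infty$, averaging the remainder over $k=0,\dots,N-1$ gives an $O(\Delta)$ error, while $\frac1N\sum_{k=0}^{N-1}\mathbb E_{\mathbb Q}[F_l(\Sigma^{\mathbb Q}_{k/N})]$ is a Riemann sum of the continuous map $t\mapsto\mathbb E_{\mathbb Q}[F_l(\Sigma^{\mathbb Q}_t)]$ and hence converges to $\mathbb E_{\mathbb Q}[\int_0^1 F_l(\Sigma^{\mathbb Q}_t)\,dt]$, i.e.\ to the right-hand side of \eqref{srE_GBM_BM_l} by the first paragraph. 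Thus $h_l(\mathbb Q|\mathbb B^l)$ is a limit and equals the lower bound, which proves the lemma. (When $\Gamma$ is singular both sides are $+\infty$: on the right $\log\det(\Gamma\Gamma^T)=-\infty$, and on the left the one-step kernels of $M^\Gamma$ and Brownian motion are mutually singular, so $H(\mathbb Q|\mathbb B^l)|_{\mathcal F^{2^n}}=+\infty$ for all $n\ge1$.)

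The main obstacle is the exact computation of $\rho_N(x)$ with a uniformly controlled remainder. This requires handling the multivariate lognormal differential entropy with care --- it is the origin of the constant $\tfrac12\log\det(\Gamma\Gamma^T)$, hence of the $\log\det$ term in \eqref{srE_GBM_BM_l}, and the point where invertibility of $\Gamma$ is used --- expanding $e^{\Delta(\Gamma\Gamma^T)_{ii}}$ to the right order so that no $O(1)$ contribution is lost, and verifying that the remainder integrates against the uniformly bounded second moments of $M^\Gamma$ so that it disappears in the scaling limit. Everything else is bookkeeping.
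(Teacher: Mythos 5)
Your proof is correct, and its core coincides with the paper's: both reduce $H(\mathbb Q|\mathbb B^l)|_{\mathcal F^{N}}$ via the entropy chain rule and the Markov property to a sum of one-step relative entropies between a conditional lognormal and a Gaussian, and both compute that one-step quantity in closed form (your expression for $\rho_N(x)$, obtained from the lognormal differential entropy and the second moment $\sum_i x_i^2(e^{\Delta(\Gamma\Gamma^T)_{ii}}-1)$, agrees term by term with the paper's integrand). Where you genuinely diverge is in how the limit is taken. The paper integrates each one-step term against $\mathcal L(M^{\Gamma}_{(m-1)/N})$ to obtain an explicit function of $m$ and $N$, sums the resulting geometric series and passes to the limit in the closed-form expression; you instead write $\rho_N(x)=F_l(\mathrm{diag}(x)\Gamma\Gamma^T\mathrm{diag}(x))+R_N(x)$ with $|R_N(x)|\le C\Delta(1+|x|^2)$ --- which follows from $\tfrac1\Delta(e^{\Delta a}-1)-a\le \tfrac12\Delta a^2e^{a}$ --- and recognize the average as a Riemann sum for $\mathbb E_{\mathbb Q}\big[\int_0^1 F_l(\Sigma^{\mathbb Q}_t)\,dt\big]$, whose separate evaluation is exactly the content of Remark \ref{rem:eq_first_ex}. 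Your organization makes it structurally transparent that the limit is the Gantert lower bound (and in fact renders the appeal to Theorem \ref{thm:gantert_ineq} logically redundant, since the direct computation already identifies the limit), at the price of the uniform remainder estimate and the moment bound $\sup_{t\le 1}\mathbb E_{\mathbb Q}|M^{\Gamma}_t|^2<\infty$, both of which hold here; the paper's explicit summation avoids these estimates but hides the mechanism. Your parenthetical treatment of singular $\Gamma$ is a small bonus the paper leaves implicit.
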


 \begin{lem}\label{lem:Black-Scholes}
The specific relative entropy between two $\R^l$-valued martingale Black-Scholes models with parameters $\Gamma_1,\Gamma_2 \in \mathbb{R}^{l \times l}$, starting at the same point, is:
{\begin{equation}\label{equazioneB}
h_l(\mathcal L(M^{\Gamma_1})|\mathcal L(M^{\Gamma_2}))=\textstyle  \frac{1}{2}\Big(tr\big((\Gamma_2 \Gamma_2^T)^{-1}(\Gamma_1 \Gamma_1^T)\big)-l - \log \frac{\det (\Gamma_1 \Gamma_1^T)}{\det (\Gamma_2 \Gamma_2^T)}\Big) 
\end {equation}}
\end{lem}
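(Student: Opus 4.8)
The plan is to reduce the statement to a relative-entropy computation between two $\R^l$-valued Brownian motions with constant drift, by means of a pathwise logarithmic change of variables, and then to evaluate the resulting Gaussian expression and pass to the dyadic limit. Solving the defining SDE coordinatewise, the martingale Black--Scholes model has the closed form $M^{\Gamma,i}_t=M^{\Gamma,i}_0\exp\bigl((\Gamma B_t)_i-\tfrac12(\Gamma\Gamma^T)_{ii}\,t\bigr)$; it is in particular a square integrable martingale. Consequently the process $Y^{\Gamma}:=(\log M^{\Gamma,1},\dots,\log M^{\Gamma,l})$ is an $\R^l$-valued Brownian motion with constant drift and constant diffusion matrix, namely $Y^{\Gamma}_t=Y^{\Gamma}_0+\Gamma B_t-\tfrac t2\,\mathrm d_{\Gamma}$, where $\mathrm d_{\Gamma}:=\bigl((\Gamma\Gamma^T)_{11},\dots,(\Gamma\Gamma^T)_{ll}\bigr)\in\R^l$ and $Y^{\Gamma}_0=\log M^{\Gamma}_0$. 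In particular $Y^{\Gamma}$ has stationary independent increments; on the dyadic mesh of size $2^{-n}$ these increments are i.i.d.\ $\mathcal{N}\bigl(-2^{-n-1}\mathrm d_{\Gamma},\,2^{-n}\Gamma\Gamma^T\bigr)$, and $Y^{\Gamma_1}_0,Y^{\Gamma_2}_0$ share the same law since $M^{\Gamma_1},M^{\Gamma_2}$ start at the same point.

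Next one checks that $h_l$ is unaffected by the logarithm. Both $\mathcal{L}(M^{\Gamma_1})$ and $\mathcal{L}(M^{\Gamma_2})$ are concentrated on the Borel set of continuous paths with strictly positive coordinates, on which $\Phi\colon\omega\mapsto(\log\omega^1(\cdot),\dots,\log\omega^l(\cdot))$ is a bimeasurable bijection; moreover $\Phi$ intertwines the finite-dimensional projections onto dyadic times with the bimeasurable bijection $g\colon(0,\infty)^l\to\R^l$, $g(x)=(\log x^1,\dots,\log x^l)$, applied timepoint by timepoint. Since relative entropy is invariant under bimeasurable bijections, $H(\mathcal{L}(M^{\Gamma_1})|\mathcal{L}(M^{\Gamma_2}))|_{\mathcal{F}^{2^n}}=H(\mathcal{L}(Y^{\Gamma_1})|\mathcal{L}(Y^{\Gamma_2}))|_{\mathcal{F}^{2^n}}$ for all $n$, hence $h_l(\mathcal{L}(M^{\Gamma_1})|\mathcal{L}(M^{\Gamma_2}))=h_l(\mathcal{L}(Y^{\Gamma_1})|\mathcal{L}(Y^{\Gamma_2}))$.

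It then remains to compute the latter. Applying the deterministic bijective transformation that sends the vector of dyadic marginals of $Y^{\Gamma}$ to its initial value together with its consecutive increments $Y^{\Gamma}_{k2^{-n}}-Y^{\Gamma}_{(k-1)2^{-n}}$, $k=1,\dots,2^n$, the resulting joint law under $\mathcal{L}(Y^{\Gamma_j})$ is a product measure whose initial factors coincide; by additivity of relative entropy over independent coordinates, writing $A_j:=\Gamma_j\Gamma_j^T$ and $m_j:=-2^{-n-1}\mathrm d_{\Gamma_j}$,
\[
H(\mathcal{L}(Y^{\Gamma_1})|\mathcal{L}(Y^{\Gamma_2}))|_{\mathcal{F}^{2^n}}=2^{n}\,H\bigl(\mathcal{N}(m_1,2^{-n}A_1)\,\big|\,\mathcal{N}(m_2,2^{-n}A_2)\bigr).
\]
Inserting the standard closed form for the Kullback--Leibler divergence of two nondegenerate Gaussians, the factors $2^{-n}$ cancel in the trace and log-determinant terms, leaving
\[
2^{-n}H(\mathcal{L}(Y^{\Gamma_1})|\mathcal{L}(Y^{\Gamma_2}))|_{\mathcal{F}^{2^n}}=\tfrac12\Bigl(\mathrm{tr}\bigl(A_2^{-1}A_1\bigr)-l-\log\tfrac{\det A_1}{\det A_2}\Bigr)+O(2^{-n}),
\]
the $O(2^{-n})$ term being $2^{-n-3}(\mathrm d_{\Gamma_1}-\mathrm d_{\Gamma_2})^{T}A_2^{-1}(\mathrm d_{\Gamma_1}-\mathrm d_{\Gamma_2})$. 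Letting $n\to\infty$ the limit exists and equals the right-hand side of \eqref{equazioneB} (recall $A_j=\Gamma_j\Gamma_j^T$), which is therefore also the $\liminf$ in the definition of $h_l$.

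The arithmetic above is routine; the step that requires care is the invariance claim, i.e.\ that the $\liminf$ defining $h_l$ is genuinely unchanged by the pathwise logarithm. This rests on (i) pinning down the support of $\mathcal{L}(M^{\Gamma_j})$ inside the strictly positive continuous paths, so that $\Phi$ really is a bijection there, and (ii) the elementary but essential fact that relative entropy is preserved under bimeasurable bijections, applied at each dyadic level $\mathcal{F}^{2^n}$. One should also record the standing nondegeneracy hypothesis that $\Gamma_1,\Gamma_2$ are invertible (equivalently, $\Gamma_j\Gamma_j^T$ is positive definite), without which the right-hand side of \eqref{equazioneB} is undefined; in the degenerate case the same reasoning gives $h_l=+\infty$.
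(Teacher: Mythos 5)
Your proof is correct and follows essentially the same route as the paper's: both reduce the computation to the relative entropy between i.i.d.\ Gaussian increments of the log-process (via the invariance of relative entropy under bimeasurable bijections, the chain rule for relative entropy, and independence of increments), insert the closed-form Gaussian formula, and note that the mean-difference term is $O(2^{-n})$ and so vanishes in the limit. The only difference is presentational: you apply the coordinatewise logarithm once at the path level, whereas the paper passes to the ratios $M_{k/n}/M_{(k-1)/n}$ and takes logarithms increment by increment.
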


One important message coming from these lemmas is that Gantert's inequality \eqref{eq:Gantert_h_Rl} becomes an actual equality in these cases, just as in the one-dimensional situation. See Remarks \ref{rem:eq_first_ex} and \ref{rem:eq_second_ex}.

We refer the reader to \cite{Be24} for numerical simulation approximating the specific relative entropy (more precisely, its lower bound) between certain stochastic volatility models, such as SABR and Heston models, and Brownian motion. In these cases no closed-form expressions are known.

\section{The multidimensional specific relative entropy}
\label{sec:multidim_spec_ent}

Throughout $l\in\mathbb N$. We recall that the trace of a square matrix $A \in \R^{l \times l}$ is the sum of the diagonal elements of $A$, i.e. $tr(A):=\sum_{i=1}^l A_{ii}$. Furthermore, the trace is a linear map and it is commutative under the product of matrices. We will denote by $A^T$ the transpose of matrix $A$. Notice that if $x \in \R^l$ then $tr(Axx^T)=x^TAx$. We denote the $l\times l$-identity matrix with the symbol $\mathcal{I}$, the $l\times l$-null matrix with the symbol $\mathcal{O}_{l\times l}$, the $l$-null vector with the symbol $0_l$ {and the $l$-vector of all ones with $\mathbf{1}$.} Whenever $A$ is symmetric positive definite, we write $A^{1/2}$ for its unique symmetric positive definite square root.
If $Y$ is a random variable, we write $\mathcal L(Y)$ for its law. If the underlying probability space $(\Omega,\mathcal F,\mathbb P)$  is not obviously fixed from the context, we write $\mathcal L_{\mathbb P}(Y)$.

\subsection{Preliminaries}

As we recall in Appendix \ref{app:A}, for  two normal variables in $\mathbb R^l$ that have the same mean, we have:
    \begin{equation}\label{eq:prelim_ent_Gaussians} \textstyle 
        H(\mathcal{N}_l(\mu, \Sigma_1)|\mathcal{N}_l(\mu, \Sigma_2))=\frac{1}{2}\Big(tr(\Sigma_2^{-1}\Sigma_1)-l-\log\left( \frac{\det \Sigma_1}{\det \Sigma_2}\right)\Big).
    \end{equation}
 This suggests the definition: 
 
 \begin{defn}
    We write $$F_l(X):=\frac{1}{2}(tr(X)-l-\log(\det X)),$$ for every symmetric positive definite matrix $X\in \R^{l \times l}$. Thus 
\begin{align*}
  H(\mathcal{N}_l(\mu, \Sigma_1)|\mathcal{N}_l(\mu, \Sigma_2))
  & = \textstyle  \frac{1}{2}\Big(tr(\Sigma_2^{-1}\Sigma_1)-l-\log( \frac{\det \Sigma_1}{\det \Sigma_2})\Big) \\
  &=\textstyle  \frac{1}{2}\Big(tr(\Sigma_1^{1/2}\Sigma_2^{-1}\Sigma_1^{1/2})-l-\log( \det (\Sigma_1^{1/2}\Sigma_2^{-1}\Sigma_1^{1/2}))\Big) \\
  &=F_l(\Sigma_1^{1/2}\Sigma_2^{-1}\Sigma_1^{1/2})\\
  &=F_l(\Sigma_2^{-1/2}\Sigma_1\Sigma_2^{-1/2}) .
    \end{align*}
    With some abuse of notation we still write $F(\Sigma_2^{-1}\Sigma_1)$ for the above number, even if $\Sigma_2^{-1}\Sigma_1$ need not be symmetric.
    \end{defn}

We start observing some elementary properties concerning this definition.  We defer the proof of these facts to Appendix \ref{app:B}.
  
\begin{lem}
\label{F_l}
\begin{enumerate}
    \item  Let $\alpha \in \mathbb{R}_+$. then $F_l(\alpha \mathcal{I})=l F_1(\alpha)$.
    \item  Let $D=diag(\sigma_i^2)$ be a diagonal matrix with $\sigma_i^2 \in \mathbb{R}_+$ ($i=1, \dots, l$) on the diagonal. Then $F_l(D)=\sum _{i=1}^lF_1(\sigma_i^2)$.
    \item  Let $\Sigma \in \mathbb{R}^{l \times l}$ symmetric and positive definite, and let $D=A^{-1}\Sigma A=diag(eigenvalues\, of \, \Sigma)$. Then $F_l(\Sigma)=F_l(D)=\sum_{\sigma^2\in eigenvalues\, of \, \Sigma} F_1(\sigma^2)$. 
    \item 
    The function $X\mapsto F_l(X):= \frac{1}{2}(tr(X)-l-\log(\det X))$, from the set of $l\times l$ symmetric positive semidefinite matrices into the reals, is convex, non-negative, and it vanishes exclusively at $X=\mathcal I$.
    \end{enumerate}
\end{lem}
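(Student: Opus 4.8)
The plan is to reduce everything to the one-dimensional function $F_1(x)=\tfrac12(x-1-\log x)$ via diagonalization, and then to handle convexity by a separate, genuinely multidimensional argument. I would prove the four items essentially in the stated order, since each builds on the previous one.

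For item (1), I would simply plug $X=\alpha\mathcal I$ into the definition: $\operatorname{tr}(\alpha\mathcal I)=l\alpha$, $\det(\alpha\mathcal I)=\alpha^l$, hence $\log\det(\alpha\mathcal I)=l\log\alpha$, and therefore $F_l(\alpha\mathcal I)=\tfrac12(l\alpha-l-l\log\alpha)=l\,F_1(\alpha)$. For item (2), the same computation works with $D=\operatorname{diag}(\sigma_i^2)$: $\operatorname{tr}(D)=\sum_i\sigma_i^2$, $\det D=\prod_i\sigma_i^2$, so $\log\det D=\sum_i\log\sigma_i^2$, giving $F_l(D)=\tfrac12\sum_i(\sigma_i^2-1-\log\sigma_i^2)=\sum_i F_1(\sigma_i^2)$. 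For item (3), I would use that trace and determinant are invariant under conjugation: if $\Sigma$ is symmetric positive definite it is orthogonally diagonalizable as $\Sigma=ADA^{-1}$ with $D$ the diagonal matrix of its (positive) eigenvalues, so $\operatorname{tr}(\Sigma)=\operatorname{tr}(D)$ and $\det(\Sigma)=\det(D)$; hence $F_l(\Sigma)=F_l(D)$, and item (2) finishes the job. (This also shows the ``abuse of notation'' $F_l(\Sigma_2^{-1}\Sigma_1)$ is well posed, since $\Sigma_2^{-1}\Sigma_1$ is similar to the symmetric positive definite matrix $\Sigma_2^{-1/2}\Sigma_1\Sigma_2^{-1/2}$ and thus has positive real eigenvalues.)

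For item (4), non-negativity and the characterization of the zero set follow from item (3) together with the one-dimensional fact that $F_1\ge 0$ on $(0,\infty)$ with equality only at $x=1$ (immediate from $\log x\le x-1$, strict off $x=1$): thus $F_l(X)=\sum_{\lambda\in\operatorname{spec}(X)}F_1(\lambda)\ge 0$, with equality iff every eigenvalue equals $1$, i.e. iff $X=\mathcal I$ (using that $X$ is symmetric). Convexity is the one point where diagonalization does not directly help, because eigenvalues are not linear in the matrix. Here I would argue that $X\mapsto\tfrac12\operatorname{tr}(X)$ is linear, hence convex, and that $X\mapsto-\tfrac12\log\det X$ is convex on the cone of symmetric positive definite matrices — a classical fact; a clean proof restricts to a line segment $X_t=(1-t)X_0+tX_1$ and writes $\det X_t=\det(X_0^{1/2})\det(\mathcal I+t\,X_0^{-1/2}(X_1-X_0)X_0^{-1/2})\det(X_0^{1/2})$, reducing to the one-dimensional concavity of $\log$ applied to the (positive) eigenvalues of the symmetric matrix $X_0^{-1/2}(X_1-X_0)X_0^{-1/2}$, whence $t\mapsto\log\det X_t$ is concave and $F_l$ is convex as a sum of convex functions. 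The continuity/finiteness needed to extend these statements to the closed positive \emph{semi}definite cone is handled by noting $F_l\to+\infty$ as $\det X\to 0^+$, so the convention $F_l=+\infty$ on the boundary preserves convexity and lower semicontinuity.

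The main obstacle is the convexity of $-\log\det$ on the positive definite cone: the naive eigenvalue approach fails because $\operatorname{spec}(X_t)$ is not an affine function of $t$, so one must route through the determinant identity above (or, alternatively, through the integral representation $\log\det X=\operatorname{tr}\log X$ and operator-convexity arguments, or through the Hessian computation showing $D^2(-\log\det)[H,H]=\operatorname{tr}((X^{-1}H)^2)\ge 0$ for symmetric $H$). Everything else is bookkeeping with trace and determinant and the elementary inequality $\log x\le x-1$.
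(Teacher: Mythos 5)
Your proof is correct and follows essentially the same route as the paper: items (1)--(3) by direct computation and conjugation-invariance of trace and determinant, non-negativity and the zero set via item (3) and the one-dimensional $F_1$, and convexity of $-\log\det$ by conjugating the segment into $\mathcal I + t\,X_0^{-1/2}(X_1-X_0)X_0^{-1/2}$ and applying concavity of $\log$ eigenvalue by eigenvalue, which is exactly the paper's argument in a slightly different parametrization. One small slip: the eigenvalues of $X_0^{-1/2}(X_1-X_0)X_0^{-1/2}$ need not be positive (only the numbers $1+t\mu_i$, i.e.\ the eigenvalues of $X_0^{-1/2}X_tX_0^{-1/2}$, are), but this does not affect the validity of the concavity argument.
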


For later use the next remarks will be useful:

\begin{rmk} \label{ex_extra} {The relative entropy at time $t$ between a $l$-multivariate martingale Black-Scholes model $M_t^{\Gamma}$ with parameter $\Gamma \in \R^{l\times l}$, and a $l$-multivariate Brownian motion $B_t^l$, both starting at $\mathbf{1}$, is: \begin{equation*} \begin{split} 
&\textstyle H(\mathcal{L}(M_t^{\Gamma})| \mathcal{L}(B_t^l))=\frac{1}{2}\Big(-\log (\det (\Gamma \Gamma^T) ) + t \sum_{i=1}^l \sum_{k=1}^l \Gamma_{ik}^2 -l + \sum_{i=1}^l \frac{e^{t\sum
_{k=1}^l\Gamma_{ik}^2}-1 }{t} \Big).
\end{split}
\end{equation*}}
{We defer to Appendix \ref{app:B} for a proof.}
\end{rmk}

\begin{rmk}
\label{ex1.4}
     Given two $l$-multivariate lognormal distributions $ \sim lognormal(\mu_i,\Sigma_i)$ where $i=1,2$, $\mu_i \in \mathbb{R}^l$, $\Sigma_i \in \mathbb{R}^{l \times l}$, then the relative entropy between the two variables coincides with the relative entropy between two $l$-multivariate normal variables with the same parameters, i.e.: 
    \begin{equation*} \begin{split} 
    H(lognormal(\mu_1,\Sigma_1)|lognormal(\mu_2,\Sigma_2))=H(\mathcal{N}_l(\mu_1,\Sigma_1)|\mathcal{N}_l(\mu_2,\Sigma_2)).
    \end{split} \end{equation*}
Indeed, this is the consequence of a more general result, given by Lemma \ref{lem1.2} in the appendix with the choice $T(x)=e^x$. 
\end{rmk} 

\subsection{Wiener space setting and Gaussian computations}

We consider the Wiener space $(C,\mathcal{F})$ of $\R^l-$valued continuous paths, so $$C:=C([0,1];\R^l):=\{ \omega : [0,1] \rightarrow \R^l \, |  \, \omega \, \text{ is continuous}\},$$ and $\mathcal{F}:=\sigma(X_t : t \in [0,1])$, where $X=(X_t)_{t \in [0,1]}$ is the canonical process, i.e. $X_t(\omega)=\omega(t)$. Further we consider the filtration $\mathcal{F}_t=\sigma(X_s:s \leq t)$. 

Since we are interested in the laws of continuous martingales, we introduce:
\begin{defn}
  $\mathcal{M}_1(C)$ is the collection of all probability measures on $(C,\mathcal{F})$. A martingale measure is a probability measure $\Q \in \mathcal{M}_1(C) $ such that the canonical process X is a martingale with respect to the filtration $(\mathcal{F}_t)_{t \in [0,1]}$ under $\Q$. A martingale measure is called square integrable, denoted by $\mathbb Q\in \mathcal{M}^2_l$, if $\mathbb E_{\mathbb Q}[|X^i_1|^2]<\infty$ for $i=1,\dots, l$. 
\end{defn}

We will denote throughout by 
$$\mathbb{B}:= \mathbb{B}^l\in \mathcal{M}^2_l$$ the law of the standard Brownian motion in $\R^l$, i.e.\ the Wiener measure.

\begin{rmk}
    The relative entropy as a measure of ``distance" or ``discrepancy'' is only interesting when there is absolute continuity. Hence, it is not a useful tool for continuous martingale measures; see Example \ref{ex:two_BM} for an explicit computation.  To go over this case, one defines for all $n\in \mathbb{N}$ 
    \[\mathcal{F}^n:=\sigma (X_{k/n}:k=0,1,\dots,n).\]
    In this way, it might happen that $\mathbb{Q}$  is not absolutely continuous to $\mathbb{P}$ in the whole sigma-algebra $\mathcal{F}$, while it is in the smaller sigma-algebras $\mathcal{F}^n$ for all $n \in \mathbb{N}$. 
\end{rmk}

\begin{defn}
 Let $n \in \mathbb{N}$ and $\mathbb{P},\mathbb{Q} \in \mathcal{M}_1(\mathcal{C})$ martingale measures such that $\mathbb{Q}|_{\mathcal{F}^n}\ll \mathbb{P}|_{\mathcal{F}^n}$. 
We denote the Radon-Nikodym density of $\mathbb{Q}|_{\mathcal{F}^n}$ with respect to $\mathbb{P}|_{\mathcal{F}^n}$ on $(\mathcal{C},\mathcal{F}^n)$ as $\frac{d\mathbb{Q}}{d\mathbb{P}}|_{\mathcal{F}^n}$ and define: 
\begin{equation*} \begin{split}
H(\mathbb{Q}|\mathbb{P})|_{\mathcal{F}^n}:=\begin{cases}
    \int _{\mathcal{C}} \log \frac{d\mathbb{Q}}{d\mathbb{P}}|_{\mathcal{F}^n}d\mathbb{Q} \, \text{ if } \, \mathbb{Q}|_{\mathcal{F}^n} \ll \mathbb{P}|_{\mathcal{F}^n} \\
    +\infty \, \text{ otherwise.}
\end{cases}
\end{split} \end{equation*}
\end{defn}
Let $n\in \N$. For $k=1,\dots,n$, we use the following notation. For the $k$-th increment of a process $Y$ we write $$\Delta_k^n Y:= Y_{\frac{k}{n}}-Y_{\frac{k-1}{n}},$$ while for a function  $\xi $ we use the notation $\Delta_k^n\xi := \xi(\frac{k}{n})-\xi(\frac{k-1}{n})$.

The following useful lemma is immediate: 
\begin{lem} \label{Lemma2.1}
    Let $n \in \mathbb{N}$ and $\mathbb{P},\mathbb{Q}\in \mathcal{M}_1(C) $ such that $\mathbb{Q}|_{\mathcal{F}^n}\ll \mathbb{P}|_{\mathcal{F}^n}$. Then $\mathbb{P}$-a.s., 
    \begin{equation*}
        \frac{d\mathbb{Q}}{d\mathbb{P}}|_{\mathcal{F}^n}(\omega)=\frac{d\mathcal{L}_{\mathbb{Q}}(X_0, \Delta_1^n X,\dots,\Delta_n^nX)}{d\mathcal{L}_{\mathbb{P}}(X_0, \Delta_1^n X,\dots,\Delta_n^nX)}(\omega(0),\Delta_1^n\omega, \dots, \Delta_n^n\omega).
    \end{equation*}
    In particular, 
    \begin{equation*}
H(\mathbb{Q}|\mathbb{P})|_{\mathcal{F}^n}=H(\mathcal{L}_{\mathbb{Q}}(X_0, \Delta_1^n X,\dots,\Delta _n ^n X)| \mathcal{L}_{\mathbb{P}}(X_0, \Delta_1^n X,\dots,\Delta_n^nX)).
    \end{equation*}
    Moreover, if $\mathbb{P}$ and $\mathbb{Q}$ are such that $X$ has independent increments. Then
    \[H(\mathbb{Q}|\mathbb{P})|_{\mathcal{F}^n}=H(\mathcal{L}_{\mathbb{Q}}(X_0)|\mathcal{L}_{\mathbb{P}}(X_0))+\sum _{k=1}^n H(\mathcal{L}_{\mathbb{Q}}(\Delta _k^n X)|\mathcal{L}_{\mathbb{P}}(\Delta _k^n X)).\]
\end{lem}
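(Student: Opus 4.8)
The plan is to reduce the whole statement to the abstract change-of-variables rule for Radon--Nikodym derivatives under a measurable map, together with the tensorization of relative entropy across product measures. To set up, let $T_n\colon C\to(\R^l)^{n+1}$ be the Borel map $T_n(\omega):=(\omega(0),\Delta_1^n\omega,\dots,\Delta_n^n\omega)$. The first thing to record is that $\sigma(T_n)=\mathcal{F}^n$: each component of $T_n$ is $\mathcal{F}^n$-measurable, and conversely each $\omega(k/n)$ is recovered from $T_n(\omega)$ by a telescoping partial sum. Hence $\mathbb{Q}|_{\mathcal{F}^n}$ is precisely the restriction of $\mathbb{Q}$ to $\sigma(T_n)$, and this restriction is encoded by the pushforward $(T_n)_*\mathbb{Q}=\mathcal{L}_{\mathbb{Q}}(X_0,\Delta_1^nX,\dots,\Delta_n^nX)$; similarly for $\mathbb{P}$.

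Next I would invoke the standard fact that for a measurable $T$ and measures $\mu,\nu$ on $(\Omega,\sigma(T))$ with pushforwards $\mu_T,\nu_T$, one has $\mu\ll\nu$ on $\sigma(T)$ iff $\mu_T\ll\nu_T$, and then $\tfrac{d\mu}{d\nu}=\big(\tfrac{d\mu_T}{d\nu_T}\big)\circ T$ holds $\nu$-a.s.; this is because for $g:=d\mu_T/d\nu_T$ and $A=T^{-1}(B)$ one has $\mu(A)=\mu_T(B)=\int_B g\,d\nu_T=\int_A(g\circ T)\,d\nu$, so uniqueness of the density identifies $g\circ T$ with $d\mu/d\nu$. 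Applying this with $T=T_n$, $\mu=\mathbb{Q}|_{\mathcal{F}^n}$, $\nu=\mathbb{P}|_{\mathcal{F}^n}$ gives the first displayed identity. Substituting it into the definition of $H(\mathbb{Q}|\mathbb{P})|_{\mathcal{F}^n}$ and changing variables once more yields $\int_C\big(\log\tfrac{d(T_n)_*\mathbb{Q}}{d(T_n)_*\mathbb{P}}\big)\circ T_n\,d\mathbb{Q}=\int_{(\R^l)^{n+1}}\log\tfrac{d(T_n)_*\mathbb{Q}}{d(T_n)_*\mathbb{P}}\,d(T_n)_*\mathbb{Q}$, which is the asserted equality; and by the first step absolute continuity holds on $\mathcal{F}^n$ exactly when it holds for the pushforwards, so both sides are $+\infty$ together in the opposite case.

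For the last assertion, under independent increments the law $\mathcal{L}_{\mathbb{Q}}(X_0,\Delta_1^nX,\dots,\Delta_n^nX)$ equals the product $\mathcal{L}_{\mathbb{Q}}(X_0)\otimes\bigotimes_{k=1}^n\mathcal{L}_{\mathbb{Q}}(\Delta_k^nX)$, and likewise under $\mathbb{P}$; I would then apply the identity $H\big(\bigotimes_j\mu_j\,\big|\,\bigotimes_j\nu_j\big)=\sum_j H(\mu_j|\nu_j)$, valid in $[0,\infty]$, which holds because the density of a product measure is the product of the factor densities, so its logarithm splits and Fubini integrates each summand over the corresponding factor. Combined with the previous step this gives the stated decomposition.

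I do not anticipate a real obstacle, consistent with the lemma being ``immediate''. The only two points needing care are the measurability identity $\sigma(T_n)=\mathcal{F}^n$ --- which is what transports absolute continuity and densities between $(C,\mathcal{F}^n)$ and $((\R^l)^{n+1},\mathcal{B})$ --- and the bookkeeping of the $+\infty$ conventions through the change-of-variables and tensorization steps, so that all three displayed relations are genuine identities in $[0,\infty]$ and not merely valid under an a priori finiteness assumption.
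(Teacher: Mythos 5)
Your proof is correct and is essentially the argument the paper intends: the lemma is stated without proof as ``immediate,'' and your change-of-variables step is exactly Lemma~\ref{lem1.2} of the appendix applied to $T_n(\omega)=(\omega(0),\Delta_1^n\omega,\dots,\Delta_n^n\omega)$ (using $\sigma(T_n)=\mathcal F^n$), while the independent-increments decomposition is the standard tensorization also recoverable from Lemma~\ref{lem1.3}. No gaps.
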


\begin{rmk}
 Let $\mathbb{Q},\mathbb{P} \in \mathcal{M}_1(C)$ be martingale measures such that $X_0=x$ for some $x \in \mathbb{R}^l$ both $\mathbb{Q}$-a.s.\ and $\mathbb{P}$-a.s. Then we can disregard the slot for time $0$ when computing the relative entropy on the restrictions to $\mathcal{F}^n$. Indeed, we have $\mathbb{P}$-a.s., 
    \begin{equation*} \begin{split}
    &\frac{d\mathcal{L}_{\mathbb{Q}}(X_0, \Delta_1^n X,\dots,\Delta_n^nX)}{d\mathcal{L}_{\mathbb{P}}(X_0, \Delta_1^n X,\dots,\Delta_n^nX)}(\omega(0),\Delta_1^n\omega, \dots, \Delta_n^n\omega)\\=&\frac{d\mathcal{L}_{\mathbb{Q}}( \Delta_1^n X,\dots,\Delta_n^nX)}{d\mathcal{L}_{\mathbb{P}}( \Delta_1^n X,\dots,\Delta_n^nX)}(\Delta_1^n \omega, \dots, \Delta_n^n\omega),
\end{split}\end{equation*}
and therefore $H(\mathbb{Q}|\mathbb{P})|_{\mathcal{F}^n}=H(\mathcal{L}_{\mathbb{Q}}( \Delta_1^n X,\dots,\Delta _n ^n X)| \mathcal{L}_{\mathbb{P}}( \Delta_1^nX,\dots,\Delta_n^nX)).$
\end{rmk}

\begin{defn}
   Let $\mathbb{Q},\mathbb{P} \in \mathcal{M}_1(C)$  be martingale measures.  Then the specific relative entropy of $\mathbb{Q}$ with respect to $\mathbb{P}$ is defined as \[h_l(\mathbb{Q}|\mathbb{P}):=\liminf _{n \rightarrow \infty}2^{-n}H(\mathbb{Q}|\mathbb{P})|_{\mathcal{F}^{2^n}}.\]
\end{defn}

\begin{ex}\label{ex:two_BM}
By Lemma \ref{Lemma2.1} we can compute the specific relative entropy between two $l$-dimensional Brownian motions: $\mathbb{Q}=\mathcal{L}((\sigma B_t)_{t \in [0,1]})$ and  $\mathbb{P}=\mathcal{L}((\eta B_t)_{t \in [0,1]})$ with $\sigma, \eta \in \mathbb{R}$.
Indeed,
\begin{equation*} \begin{split}
    H(\mathbb{Q}|\mathbb{P})|_{\mathcal{F}^n}=\sum_{k=1}^{n} H(\mathcal{L}(\Delta _k^n \sigma B)|\mathcal{L} (\Delta _k^n \eta B)).
    \end{split}\end{equation*}
Now, given the law of the increments:
    \[ \textstyle \sigma \mathcal{N}_l\left (0_l, \frac{k}{n} \mathcal{I}\right)-\sigma \mathcal{N}_l\left(0_l, \frac{k-1}{n} \mathcal{I}\right) \sim \mathcal{N}_l\left(0_l, \frac{\sigma^2}{n} \mathcal{I}\right).\]
Then we have
    \[\textstyle H(\mathbb{Q}|\mathbb{P})|_{\mathcal{F}^n}=\sum_{i=1}^n H\left(\mathcal{N}_l\left(0_l, \frac{\sigma^2}{n} \mathcal{I}\right)| \mathcal{N}_l\left(0_l, \frac{\eta^2}{n} \mathcal{I}\right)\right),\]
    so by Remark \ref{rmk2} in the appendix, this number is equal to:
    \begin{equation*} \begin{split}
      \textstyle  \sum_{i=1}^n F_l\big((\frac{\eta ^2}{n} \mathcal{I})^{-1}( \frac{\sigma ^2}{n} \mathcal{I})\big) &= \textstyle  \sum_{i=1}^n F_l\big(\frac{\sigma^2}{\eta^2}\mathcal{I}\big) 
    =\frac{1}{2} \sum_{k=1}^n tr(\frac{\sigma ^2}{\eta^2}\mathcal{I})-l-\log \big(\det \frac{\sigma^2}{\eta^2}\mathcal{I}\big)\\
    &=\textstyle \frac{1}{2} \sum_{k=1}^n l \frac{\sigma ^2}{\eta^2} -l-l \log(\frac{\sigma ^2}{\eta^2})    = n \frac{l}{2}(\frac{\sigma ^2}{\eta^2} -1-\log \frac{\sigma ^2}{\eta^2}).
    \end{split}\end{equation*}
    As this sequence explodes with $n$ if $\sigma^2\neq\eta^2$, this already shows that $H(\mathbb Q|\mathbb P)=+\infty$. 
    Finally,  by definition of the specific relative entropy:
    \begin{equation*} \begin{split}h_l(\mathbb{Q}|\mathbb{P})&= \textstyle  \liminf_{n \rightarrow \infty} 2^{-n} H(\mathbb{Q}|\mathbb{P})|_{\mathcal{F}^{2^n}}=\frac{l}{2}\left(\frac{\sigma ^2}{\eta^2} -1-\log \frac{\sigma ^2}{\eta^2}\right),
    \end{split}\end{equation*}
    and the liminf is a limit, even if we considered integers that are not of the form $2^n$.
\end{ex}

We now turn our attention to the important case of general Gaussian martingales: Let $a(t) \in \R^{l \times l}$ be a differentiable symmetric positive definite matrix-valued function such that for times $t\geq s$ we have $a(t)\geq a(s)$ in the sense of positive matrices. Then there exists a  symmetric and positive definite matrix $G(s)\in \R^{l \times l} $ such that $a(t)=\int _0^t G(s)ds$. 

\begin{rmk}
    Let $B_t$ be the standard Brownian motion in $\R^l$ and let $G(t) \in \R^{l \times l}$ as above. Define $\bar{B}_t:= \int _0^1 G(s)^{1/2} dB_s$, i.e.\ for $i=1,\dots,l$ the coordinates of $\bar B$ are defined as $\bar{B}^i_t:=\int _0^t \sum _{k=1}^l (G(s)^{1/2})_{ik}dB_s^k$. We have:
    \begin{equation*}\begin{split}
        \mathbb{E}[\bar{B}^i_t \bar{B}^j_t]=& \textstyle \mathbb{E}\Big[\sum_u\sum_v \int _0^t (G(s)^{1/2})_{iu}dB_s^u \int _0^t (G(s)^{1/2})_{jv}dB_s^v\Big]\\
  =& \textstyle \sum_u \mathbb{E}\Big[\int _0^t (G(s)^{1/2})_{iu}dB_s^u \int _0^t (G(s)^{1/2})_{ju}dB_s^u\Big]\\
=& \textstyle \int _0^t \sum _u (G(s)^{1/2})_{iu}(G(s)^{1/2})_{ju}ds
=\int _0^t (G(s)^{1/2}(G(s)^{1/2})^T)_{ij}ds
\\ =&\textstyle \int _0^t (G(s))_{ij}ds = a_{ij}(t).
\end{split} \end{equation*}
\end{rmk}
Thus, for every $a(\cdot)$ as above, we can define  the martingale measure $\mathbb{Q}^a:=\mathcal{L}(\bar{B})= \mathcal{L}( \int _0^\cdot G(s)^{1/2} dB_s)$, and if we further assume that $\forall i=1,\dots, l$, $\int_0^1 G_{ii}(s)ds< \infty$, then  $\mathbb{Q}^a \in \mathcal{M}^2_l$. By the previous remark, the time $t$ marginal of $\mathbb{Q}^a$ is equal to $\mathcal{N}_l(0_l, a(t))$, and $\mathbb{Q}^a$ has  independent increments.  
Now we can see what \cite[Satz 1.2]{Ga91} becomes in this multivariate setup: 
\begin{prop} \label{thm2.2}
    We have:
\begin{equation*}\begin{split}  
h_l(\mathbb{Q}^a|\mathbb{B})&=\textstyle \frac{1}{2}\int _0^1\big(
tr(G(s)) -l -\log (\det G(s))\big)ds =\sup _{n\in \N} \frac{1}{n} H(\mathbb{Q}^a|\mathbb B)|_{\mathcal{F}^n} \in [0, \infty],
\end{split} \end{equation*}
and $\frac{1}{n}\log(\frac{d\mathbb{Q}^a}{d\mathbb{B}})|_{\mathcal{F}^n}\xrightarrow{L^1(\mathbb{Q}^a)} \frac{1}{2}\int _0^1 \big( tr(G(s))-l-\log(\det G(s)) \big) ds$ if the latter is finite.
\end{prop}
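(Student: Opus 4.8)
The plan is to reduce the computation of $H(\mathbb Q^a|\mathbb B)|_{\mathcal F^n}$ to its increments and then to a deterministic Riemann-type sum plus two fluctuation terms whose variances vanish. Since both $\mathbb Q^a$ and $\mathbb B$ have independent increments and satisfy $X_0=0$ a.s., Lemma~\ref{Lemma2.1} (together with the remark letting us drop the time-$0$ coordinate) gives $H(\mathbb Q^a|\mathbb B)|_{\mathcal F^n}=\sum_{k=1}^n H(\mathcal L_{\mathbb Q^a}(\Delta_k^n X)\,|\,\mathcal L_{\mathbb B}(\Delta_k^n X))$; by the covariance computation in the remark preceding the statement, $\mathcal L_{\mathbb Q^a}(\Delta_k^n X)=\mathcal N_l(0_l,\Delta_k^n a)$ with $\Delta_k^n a=\int_{(k-1)/n}^{k/n}G(s)\,ds$ (positive definite, as an integral of positive definite matrices), while $\mathcal L_{\mathbb B}(\Delta_k^n X)=\mathcal N_l(0_l,\tfrac1n\mathcal I)$. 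Feeding this into \eqref{eq:prelim_ent_Gaussians} and the definition of $F_l$, and telescoping the trace term, I would obtain
\[
H(\mathbb Q^a|\mathbb B)|_{\mathcal F^n}=\sum_{k=1}^n F_l(n\Delta_k^n a)=\tfrac12\Big(n\!\int_0^1 tr(G(s))\,ds-nl-\sum_{k=1}^n\log\det(n\Delta_k^n a)\Big),
\]
so that, writing $\phi_n$ for the step function equal to $\log\det(n\Delta_k^n a)$ on $((k-1)/n,k/n]$ — equivalently $\phi_n=\log\det \mathbb E[G\,|\,\mathcal P_n]$, where $\mathcal P_n$ is the $\sigma$-field generated by the $n$ equal subintervals of $[0,1]$ and $\mathbb E[G\,|\,\mathcal P_n]=n\Delta_k^n a$ on the $k$-th one — one gets $\tfrac1n H(\mathbb Q^a|\mathbb B)|_{\mathcal F^n}=\tfrac12\big(\int_0^1 tr(G)\,ds-l-\int_0^1\phi_n\,ds\big)$.

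The heart of the argument is to control $\int_0^1\phi_n$. First, because $X\mapsto F_l(X)$ is convex, equivalently $X\mapsto\log\det X$ is concave on positive definite matrices (Lemma~\ref{F_l}(4)), conditional Jensen yields $\mathbb E[\log\det G\,|\,\mathcal P_n]\le\log\det\mathbb E[G\,|\,\mathcal P_n]=\phi_n$ pointwise; this is legitimate since $\log\det G\le tr(G)\in L^1$ by AM--GM, so $\int_0^1\log\det G$ is a well-defined element of $[-\infty,\int_0^1 tr(G))$. Integrating, $\int_0^1\phi_n\ge\int_0^1\log\det G$ for every $n$, hence $\tfrac1n H(\mathbb Q^a|\mathbb B)|_{\mathcal F^n}\le\tfrac12\int_0^1(tr(G)-l-\log\det G)\,ds$ for every $n$, so $\sup_n\tfrac1n H(\mathbb Q^a|\mathbb B)|_{\mathcal F^n}$ and $h_l(\mathbb Q^a|\mathbb B)$ are both bounded above by the right-hand side. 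For the matching lower bound I would show $\int_0^1\phi_{2^m}\to\int_0^1\log\det G$ as $m\to\infty$: the partitions $\mathcal P_{2^m}$ are nested and generate $\mathcal B([0,1])$, so $\mathbb E[G\,|\,\mathcal P_{2^m}]\to G$ a.e.\ by martingale convergence, and since $G(t)$ is positive definite a.e.\ and $\log\det$ is continuous there, $\phi_{2^m}\to\log\det G$ a.e.; moreover AM--GM gives $(\log\det M)^+\le tr(M)$ for positive definite $M$, hence (with $\phi^+:=\max(\phi,0)$) $0\le\phi_{2^m}^+\le \mathbb E[tr(G)\,|\,\mathcal P_{2^m}]$, which is a uniformly integrable martingale, so $\{\phi_{2^m}^+\}$ is uniformly integrable and $\int\phi_{2^m}^+\to\int(\log\det G)^+<\infty$, while Fatou gives $\liminf_m\int\phi_{2^m}^-\ge\int(\log\det G)^-$ (with $\phi^-:=\max(-\phi,0)$). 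Combining, $\limsup_m\int\phi_{2^m}\le\int\log\det G\le\liminf_m\int\phi_{2^m}$, which is the claim (and correctly yields $\int\phi_{2^m}\to-\infty$, i.e.\ $h_l=+\infty$, in the degenerate case $\int\log\det G=-\infty$). Then $2^{-m}H(\mathbb Q^a|\mathbb B)|_{\mathcal F^{2^m}}\to\tfrac12\int_0^1(tr(G)-l-\log\det G)\,ds$, and since this sequence lies between $\inf_n$ and $\sup_n$ of $\tfrac1n H(\mathbb Q^a|\mathbb B)|_{\mathcal F^n}$, all three quantities in the proposition coincide.

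For the last assertion I would start from Lemma~\ref{Lemma2.1} and the explicit Gaussian densities to get, with $X$ the canonical process,
\[
\tfrac1n\log\tfrac{d\mathbb Q^a}{d\mathbb B}\Big|_{\mathcal F^n}(X)=-\tfrac12\int_0^1\phi_n+\tfrac12\sum_{k=1}^n|\Delta_k^n X|^2-\tfrac1{2n}\sum_{k=1}^n(\Delta_k^n X)^T(\Delta_k^n a)^{-1}(\Delta_k^n X).
\]
Along $n=2^m$ the first term is deterministic and converges to $-\tfrac12\int_0^1\log\det G$ by the previous step. Under $\mathbb Q^a$ the second term has the exact mean $\tfrac12 tr(a(1))=\tfrac12\int_0^1 tr(G)$ and, by independence of increments, variance $\le\tfrac12(\max_k tr(\Delta_k^n a))\,tr(a(1))\to0$; the third term is $\tfrac1{2n}$ times a $\chi^2_{nl}$ variable (diagonalize each quadratic form), with mean $l/2$ and variance $l/(2n)\to0$. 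Hence the last two terms converge in $L^2(\mathbb Q^a)$ to $\tfrac12\int_0^1 tr(G)$ and $l/2$, and the whole expression converges in $L^1(\mathbb Q^a)$ to $\tfrac12\int_0^1(tr(G)-l-\log\det G)\,ds$ whenever this number is finite (equivalently $\int_0^1\log\det G>-\infty$), as claimed; taking expectations recovers the convergence $\tfrac1n H(\mathbb Q^a|\mathbb B)|_{\mathcal F^n}\to\tfrac12\int_0^1(tr(G)-l-\log\det G)\,ds$ along the dyadic scale.

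\textbf{Main obstacle.} The routine parts are the Gaussian reduction and the two vanishing-variance limits; the delicate point is identifying the dyadic limit of $\int_0^1\phi_{2^m}$ as exactly $\int_0^1\log\det G$ rather than merely $\ge$. This forces one to combine a.e.\ convergence (martingale convergence plus continuity of $\log\det$ on the positive-definite cone), uniform integrability of the positive parts $\phi_{2^m}^+$ via the AM--GM comparison with $\mathbb E[tr(G)\,|\,\mathcal P_{2^m}]$, and Fatou for the possibly non-integrable negative parts $\phi_{2^m}^-$ — the last of which also subsumes the case $h_l=+\infty$. A secondary subtlety is that the convexity/conditional-Jensen bound must be used for \emph{all} $n$, not only powers of $2$, in order to pin $\sup_n\tfrac1n H(\mathbb Q^a|\mathbb B)|_{\mathcal F^n}$ down to the asserted value.
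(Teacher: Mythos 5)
Your proposal is correct and follows essentially the same route as the paper's proof: reduction to Gaussian increments via Lemma~\ref{Lemma2.1}, identification of $\tfrac1n H(\mathbb Q^a|\mathbb B)|_{\mathcal F^n}$ with an integral of $F_l$ applied to the conditional expectation of $G$ on the uniform partition, Jensen's inequality for the upper bound valid for all $n$, an a.e.-convergence-plus-Fatou sandwich for the matching limit, and the same three-term decomposition of the log-density (with vanishing variances) for the $L^1$ statement. The only deviations are organizational and harmless: you telescope the trace exactly and run the Jensen/Fatou sandwich on $\log\det$ alone, using uniform integrability of the positive parts via AM--GM, where the paper runs it on the nonnegative functional $F_l$; and you invoke martingale convergence along the nested dyadic partitions where the paper uses the Lebesgue differentiation theorem for all $n$.
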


\begin{proof} We proceed in parallel with the proof of \cite[Satz 1.2]{Ga91}.
    Under $\mathbb{Q}^a$ the process $X$ is $l$-Gaussian with independent increments and for all $s<t$, 
\begin{equation*}
X_t-X_s \sim_{\mathbb{Q}^a} \mathcal{N}_l(0_l,a(t)-a(s)),\,\,\textstyle  B_{\frac{k}{n}}-B_{\frac{k-1}{n}} \sim \mathcal{N}_l\left ( 0_l,\frac{1}{n}\mathcal{I}\right ).
\end{equation*}
Recall that $H(\mathcal{N}_l(0_l, \Sigma_{1})|\mathcal{N}_l(0_l, \Sigma_{2}))=F_l(\Sigma_{2}^{-1}\Sigma_{1})$.
By Lemma \ref{Lemma2.1} and independence of increments under $\mathbb{Q}^a$ as well as under $\mathbb{B}$, we can write:
\begin{equation} \label{eq11}\begin{split}
H(\mathbb{Q}^a|\mathbb{B})|_{\mathcal{F}^n}&=\textstyle  \sum _{k=1}^n H(\mathcal{L}_{\mathbb{Q}^a}(\Delta_k^nX)|\mathcal{L}_{B}(\Delta_k^nX))\\
&=\textstyle \sum _{k=1}^n  H\left(\mathcal{N}_l(0_l,\Delta_k^n a|\mathcal{N}_l\left(0_l, \frac{1}{n}\mathcal{I}\right)\right)\\
&=\textstyle \sum _{k=1}^n F_l\left((\mathcal{I}/n)^{-1} \Delta_k^n a\right).
\end{split}\end{equation}
Now let $\mathcal{B}_n:=\sigma \big((\frac{k-1}{n},\frac{k}{n}]: k=1, \dots, n\big)$. Since we have $a(t)=\int _0^t G(s)ds$, then
\begin{equation*}\begin{split} \textstyle 
\mathbb{E}_{\lambda}[G|\mathcal{B}_n](t)=n \sum _{k=1}^n \Delta_k^n a \mathbbm{1}_{(\frac{k-1}{n},\frac{k}{n}]}(t). 
\end{split}\end{equation*}
with $\lambda$ the Lebesgue measure on $[0,1]$. Moreover, we get in \eqref{eq11} that
\begin{equation*}\begin{split} \textstyle 
\sum _{k=1}^n F_l(n\Delta_k^n a)&=n\int_0^1 F_l(n\Delta_k^n a\mathbbm{1}_{(\frac{k-1}{n},\frac{k}{n}]}(s))ds=n\int_0^1 F_l\big(\mathbb{E}_{\lambda}[G|\mathcal{B}_n](s)\big)ds,
\end{split}\end{equation*}
and dividing by $n$ we obtain
\begin{equation*}\begin{split} \textstyle 
\frac{1}{n}H(\mathbb{Q}^a|\mathbb{B})|_{\mathcal{F}^n}=\int_0^1 F_l(\mathbb{E}_{\lambda}[G|\mathcal{B}_n](s))ds.
\end{split}\end{equation*}
We now use that the function F is convex (Lemma \ref{F_l}), so by Jensen’s conditional inequality and the tower property  we obtain 
\begin{equation*}\begin{split}\textstyle 
\frac{1}{n}H(\mathbb{Q}^a|\mathbb{B})|_{\mathcal{F}^n} &=\textstyle \int _0^1F_l(\mathbb{E}_{\lambda}[G|\mathcal{B}_n](s)) ds 
\leq \int _0^1 \mathbb{E}_{\lambda} [F_l(G)|\mathcal{B}_n](s) ds
= \int _0^1 F_l(G(s))ds. \end{split}\end{equation*}
Note that $\mathbb{E}_{\lambda}[G|\mathcal{B}_n] \rightarrow G$ $\lambda$-a.e, by the Lebesgue differentiation theorem. Using
the a.s.\ convergence we now get: 
\begin{equation*}\begin{split}
\textstyle \int_0^1F_l(G(s))ds&=\textstyle  \int _0^1 \liminf_{n \rightarrow \infty} F_l(\mathbb{E}_{\lambda}[G|\mathcal{B}_n](s)) ds 
 \leq \liminf_{n \rightarrow \infty} \int_0^1  F_l(\mathbb{E}_{\lambda}[G|\mathcal{B}_n](s)) ds \\& \textstyle  \leq \limsup_{n \rightarrow \infty}\int_0^1  F_l(\mathbb{E}_{\lambda}[G|\mathcal{B}_n](s)) ds   \leq \int _0^1 F_l(G(s))ds, 
\end{split}\end{equation*}
where the first inequality is by Fatou’s Lemma and $F_l \geq 0$ ((Lemma \ref{F_l})) while the last inequality is by the upper bound obtained by Jensen above. We sum up,
\begin{equation*}\begin{split}\textstyle 
h(\mathbb{Q}^a|\mathbb{B})&=\textstyle \lim_{n \rightarrow \infty} \frac{1}{n} H(\mathbb{Q}^a|\mathbb{B})|_{\mathcal{F}^n} 
=\lim _{n \rightarrow \infty} \int _0^1
F_l(\mathbb{E}_{\lambda}[G|\mathcal{B}_n](s)) ds 
=\sup _{n\geq 1} \frac{1}{n}  H(\mathbb{Q}^a|\mathbb{B})|_{\mathcal{F}^n} \\&\textstyle  =\int_0^1F_l(G(s))ds,
\end{split}\end{equation*}
which proves the first part of the theorem. \\ 

Now we assume that $ \int_0^1 (tr(G(s))-l-\log\det G(s)) ds <\infty$ and in particular $\Delta_k^n a >0$ for all $n \in \N$ and $k=1,\dots,n$. To show $L^1$ convergence, note that applying Lemma \ref{Lemma2.1} plus some linear algebra: 
\begin{equation}\label{eq12}\begin{split} \textstyle 
& \textstyle \frac{1}{n}\log \frac{d\mathbb{Q}^a}{d\mathbb{B}}|_{\mathcal{F}^n}(X)= \frac{1}{n}\log \Pi_k \frac{d \mathcal{N}_l(0_l,\Delta_k^n a )}{d \mathcal{N}_l(0_l, \frac{1}{n}\mathcal{I})}(\Delta_k^nX)\\
&=\textstyle \frac{1}{2n} \sum _{k=1}^n \big(-\log (\det \Delta_k^n a) -(\Delta_k^nX - 0_l)^T \Delta_k^n a^{-1} (\Delta_k^nX - 0_l)+\\& +\log (\det \frac{1}{n}\mathcal{I}) + (\Delta_k^nX - 0_l)^T (\frac{1}{n}\mathcal{I})^{-1}  (\Delta_k^nX - 0_l)^T \big)
\\&\textstyle =\frac{1}{2n} \sum _{k=1}^n tr(n\mathcal{I}\Delta_k^nX (\Delta_k^nX )^T )-tr((\Delta_k^n a^{-1}\Delta_k^nX (\Delta_k^nX )^T ) -log (n^l \det\Delta_k^n a )\\
&= \textstyle tr(\frac{1}{2n} \sum_{k=1}^n n\mathcal{I}\Delta_k^nX (\Delta_k^nX )^T  )- tr(\frac{1}{2n} \sum_{k=1}^n (\Delta_k^n a^{-1}\Delta_k^nX (\Delta_k^nX )^T ) +\\& \textstyle -\frac{1}{2n}\sum_{k=1}^n log (n^l \det\Delta_k^n a ). 
\end{split}\end{equation}
As in the one dimensional case we get the $L^2(\mathbb Q^a)$ limit
\begin{equation*}\begin{split}\textstyle 
\frac{1}{n} \sum _{k=1}^n n\mathcal{I}\Delta_k^nX (\Delta_k^nX )^T \xrightarrow{n\rightarrow \infty} a(1)=\int _0^1 G(s)ds,
\end{split}\end{equation*}
and the $L^1(\mathbb Q^a)$ limit
\begin{equation*}\begin{split}\textstyle 
\frac{1}{n} \sum_{k=1}^n \Delta_k^n a^{-1}\Delta_k^nX (\Delta_k^nX )^T \xrightarrow{n\rightarrow \infty } \mathcal{I}.
\end{split}\end{equation*}
For the last term, we note  from the first part of the proof, that
\begin{equation*}\begin{split}
&\textstyle \lim _{n\rightarrow \infty} \mathbb{E}_{\mathbb{Q}^a}\Big[\frac{1}{2n} \sum _{k=1}^n tr(n\mathcal{I}\Delta_k^nX (\Delta_k^nX )^T )-tr((\Delta_k^n a^{-1}\Delta_k^nX (\Delta_k^nX )^T ) +\\ &\textstyle -\log (n^l \det\Delta_k^n a )\Big]= \int_0^1 F_l(G(s))ds  =\frac{1}{2}\int_0^1 tr(G(s))-l-\log(\det G(s)) ds.
\end{split}\end{equation*}
Therefore,  we have obtained the deterministic limit: 
\begin{equation*}\begin{split} \textstyle 
-log (n^l \det\Delta_k^n a ) \xrightarrow{n\rightarrow \infty } \int _0^1 -\log (\det G(s))ds. 
\end{split}\end{equation*}
\end{proof}

In the above proof it was not necessary to consider integers of the form $2^n$ in the definition of the specific relative entropy. 

Before moving any further, we recall the notion of quadratic covariation:
 \begin{defn}
 Let $T>0$, $t \in [0,T]$ and $l \in \N$. The quadratic covariation for a multivariate martingale $X_t^l=(X_t^1,\dots,X_t^l) $ is the matrix-valued process $ \bigl\langle X \bigr \rangle $ with entries  $ \bigl\langle X^i,X^j\bigr \rangle_t $ with $i,j=1,\dots,l$ and  \[\textstyle \bigl\langle X^i,X^j\bigr \rangle_t=\lim_{|\Pi|\rightarrow 0} \sum _{k=1} (X_{t_k}^i-X_{t_{k-1}}^i)(X_{t_k}^j-X_{t_{k-1}}^j),\]
 where $\Pi=\{0=t_0<t_1<\dots < t_n=t\}$ is a partition of $[0,t]$, $|\Pi|$ is the mesh size of the partition, and the above limit is in probability.
 \end{defn}
\begin{defn}
 We say that   $\mathbb Q\in \mathcal{M}^{2,abs}_l$ if $\mathbb Q\in \mathcal{M}^2_l$ and $\langle X^i,X^j\rangle $ has absolutely continuous paths for each $i,j$. 
\end{defn}
 \begin{rmk}
 In the previous proposition, $\bigl\langle X\bigr \rangle_t=a(t)=\int_0^t G(s)ds$ under $\mathbb{Q}^a$. \end{rmk}

 \subsection{Gantert's inequality in $\mathbb R^l$}
 Now we proceed to extend \cite[Satz 1.3]{Ga91}, concerning a lower bound for the specific relative entropy. This is the content of Theorem \ref{thm:gantert_ineq} from the introduction, which we restate here, with more details, for the convenience of the reader:
 \begin{thm} \label{thm2.3}
     Let $B$ be the standard Brownian motion and $\mathbb{B}$ its law. Let $\mathbb{Q}\in \mathcal{M}^{2,abs}_l$ be such that $X_0=0$ a.s.\ and $\bigl\langle X\bigr \rangle_t = \int_0^t G(s,X)ds$, with $G $ a predictable symmetric positive semidefinite matrix-valued process. Then we have:
\begin{equation}\label{eq13}\begin{split}\textstyle 
h_l(\Q|\mathbb{B})\geq \frac{1}{2}\E_{\Q} \Big[ \int_0^1 tr(G(s,X)) -l -\log(\det G(s,X) ) ds\Big].
\end{split}\end{equation}
 \end{thm}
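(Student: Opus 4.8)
\textbf{Proof plan for Theorem \ref{thm2.3} (Gantert's inequality in $\mathbb R^l$).}

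The plan is to mimic the one-dimensional argument of \cite[Satz 1.3]{Ga91}, reducing the lower bound to the Gaussian computation of Proposition \ref{thm2.2} by a suitable conditioning step. First I would compute $H(\mathbb Q|\mathbb B)|_{\mathcal F^n}$ using Lemma \ref{Lemma2.1}: since $\mathbb B$ has independent Gaussian increments but $\mathbb Q$ need not, I would write $H(\mathbb Q|\mathbb B)|_{\mathcal F^n} = H(\mathcal L_{\mathbb Q}(\Delta_1^n X,\dots,\Delta_n^n X)\,|\,\mathcal L_{\mathbb B}(\Delta_1^n X,\dots,\Delta_n^n X))$ and then invoke the chain rule for relative entropy to decompose this into a sum of conditional relative entropies: for each $k$, conditionally on $\mathcal F^n_{(k-1)/n}=\sigma(\Delta_1^n X,\dots,\Delta_{k-1}^n X)$, the increment $\Delta_k^n X$ has some (random) $\mathbb Q$-conditional law, while under $\mathbb B$ it is independent $\mathcal N_l(0_l,\tfrac1n\mathcal I)$. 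This yields
\begin{equation*}\textstyle
H(\mathbb Q|\mathbb B)|_{\mathcal F^n} = \sum_{k=1}^n \mathbb E_{\mathbb Q}\big[\, H\big(\mathcal L_{\mathbb Q}(\Delta_k^n X \mid \mathcal F^n_{(k-1)/n})\,\big|\,\mathcal N_l(0_l,\tfrac1n\mathcal I)\big)\,\big].
\end{equation*}

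Next, the key step: I would bound each conditional relative entropy from below by replacing the true conditional law with the Gaussian having the same (conditional) mean $0_l$ and the same conditional covariance matrix $C_k^n := \mathbb E_{\mathbb Q}[\Delta_k^n X (\Delta_k^n X)^T \mid \mathcal F^n_{(k-1)/n}]$. This uses the standard fact that among all laws with a prescribed mean and covariance, the Gaussian minimizes relative entropy against any fixed Gaussian reference (equivalently, $H(\rho|\mathcal N)\ge H(\mathcal N_{\rho}|\mathcal N)$ where $\mathcal N_\rho$ is the Gaussian matching the first two moments of $\rho$); I would state this as a lemma. Hence, using \eqref{eq:prelim_ent_Gaussians},
\begin{equation*}\textstyle
H(\mathbb Q|\mathbb B)|_{\mathcal F^n} \ge \sum_{k=1}^n \mathbb E_{\mathbb Q}\big[\, F_l\big(n\, C_k^n\big)\,\big].
\end{equation*}
Now, since $X$ is a $\mathbb Q$-martingale, $C_k^n = \mathbb E_{\mathbb Q}[\langle X\rangle_{k/n} - \langle X\rangle_{(k-1)/n}\mid \mathcal F^n_{(k-1)/n}] = \mathbb E_{\mathbb Q}[\int_{(k-1)/n}^{k/n} G(s,X)\,ds \mid \mathcal F^n_{(k-1)/n}]$, so $n C_k^n$ is a conditional average of $G$. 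Writing the sum as an integral over $[0,1]$ against a dyadic-type conditioning $\sigma$-algebra and applying the conditional Jensen inequality in the wrong direction is not available (we need a lower bound, and $F_l$ is convex), so instead I would pass to the $\liminf$: divide by $n$, note $\tfrac1n\sum_k F_l(nC_k^n) = \int_0^1 F_l(\mathbb E_{\mathbb Q}[G(s,X)\mid \mathcal G_n])\,ds$ for an appropriate $\mathcal G_n$-type filtration refining with $n$, take expectations, and use Fatou together with the $\lambda\otimes\mathbb Q$-a.e.\ martingale/differentiation convergence $\mathbb E_{\mathbb Q}[\mathbb E_\lambda[G|\mathcal B_n]\mid\cdot]\to G(s,X)$ along $n=2^m$, plus $F_l\ge 0$ (Lemma \ref{F_l}(4)) and lower semicontinuity of $F_l$. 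This gives
\begin{equation*}\textstyle
h_l(\mathbb Q|\mathbb B) = \liminf_n 2^{-n} H(\mathbb Q|\mathbb B)|_{\mathcal F^{2^n}} \ge \mathbb E_{\mathbb Q}\Big[\int_0^1 F_l(G(s,X))\,ds\Big],
\end{equation*}
which is \eqref{eq13}.

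The main obstacle I anticipate is making the last limiting step rigorous: one must carefully identify the discretized conditional covariances $n C_k^n$ with conditional expectations of $G$ over a sequence of $\sigma$-algebras that genuinely refine as $n=2^m\to\infty$, so that a martingale-convergence or Lebesgue-differentiation argument applies $\lambda\otimes\mathbb Q$-a.e.; because $G$ is only predictable (path-dependent), this is more delicate than in Proposition \ref{thm2.2} where $G$ was deterministic, and one must combine the time-averaging (Lebesgue differentiation in $s$) with the spatial conditioning (martingale convergence in $\omega$). A secondary technical point is justifying the moment-matching lower bound for conditional relative entropies measurably in the conditioning variable (so that taking $\mathbb E_{\mathbb Q}$ is legitimate), and ensuring $C_k^n$ is a.s.\ positive definite (or handling the degenerate case, where $\det$ vanishes and $F_l=+\infty$, consistently with the claimed inequality). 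Apart from these points, every ingredient — the chain rule, the Gaussian entropy formula \eqref{eq:prelim_ent_Gaussians}, convexity/nonnegativity of $F_l$ — is already available in the excerpt.
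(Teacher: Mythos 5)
Your proposal is correct and follows essentially the same route as the paper: your ``moment-matching Gaussian lower bound'' for each conditional relative entropy is exactly the paper's construction of the auxiliary measure $\mathbb Q^n$ with conditionally Gaussian increments matching the $\mathbb Q$-conditional covariances, followed by the decomposition $H(\mathbb Q|\mathbb B)|_{\mathcal F^n}=H(\mathbb Q|\mathbb Q^n)|_{\mathcal F^n}+\int\log\frac{d\mathbb Q^n}{d\mathbb B}|_{\mathcal F^n}\,d\mathbb Q$ and dropping the nonnegative first term. The subtlety you flag at the end (combining time-averaging with spatial conditioning so that martingale convergence applies to the predictable $G$) is precisely what the paper resolves via the $\sigma$-algebras $\mathcal P_n$ and $G_n=\mathbb E_{\lambda\otimes\mathbb Q}[G|\mathcal P_n]$, with $\mathcal P_{2^n}$ increasing to the predictable $\sigma$-algebra.
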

 \begin{proof}
 We proceed in parallel with the proof of \cite[Satz 1.3]{Ga91}.
 Let $n\in \N$. We start by defining a new measure on $(C, \mathcal{F}^n)$. Let $\mathbb{Q}^n$
be such that for  $k=1, \dots,n$:
\begin{equation*}\begin{split} \textstyle 
X_{\frac{k}{n}}-X_{\frac{k-1}{n}}|X_0,\dots,X_{\frac{k-1}{n}}&\sim_{\mathbb{Q}^n}\mathcal{N}_l\big( 0_l,\mathbb{E}_{\Q}[\Delta^n_kX (\Delta^n_kX)^T|X_0,\dots,X_{\frac{k-1}{n}}]\big ),
\end{split}\end{equation*}
i.e.\  the new measure is defined such that the discrete increments under $\mathbb Q^n$ have the same conditional covariances as under $\mathbb{Q}$.

Assume that $H(\mathbb{Q}|\mathbb{B})|_{\mathcal{F}^n} < \infty$. Then it follows that, $\mathbb{Q}|_{\mathcal{F}^n} \ll \mathbb{B}|_{\mathcal{F}^n}$, which implies, 
$\mathbb{Q}|_{\mathcal{F}^n} \ll \Q^n $ and $ \mathbb{Q}^n \ll \mathbb{B}|_{\mathcal{F}^n}$, and thus we have, 
 $\frac{d\mathbb{Q}}{d\mathbb{B}}|_{\mathcal{F}^n}=\frac{d\mathbb{Q}}{d\mathbb{Q}^n}|_{\mathcal{F}^n}\frac{d\mathbb{Q}^n}{d\mathbb{B}}|_{\mathcal{F}^n}$. 
Now taking the logarithm and integrating with respect to $\mathbb{Q}$ yields
\begin{equation}\label{eq14}\begin{split}
H(\Q|\mathbb{B})|_{\mathcal{F}^n} &= \textstyle \int_C \log \frac{d\Q}{d\Q^n}|_{\mathcal{F}^n} d\Q +\int_C \log \frac{d\Q^n}{d\mathbb{B}}|_{\mathcal{F}^n} d\Q \\ &\textstyle =H(\Q|\Q^n)|_{\mathcal{F}^n}+\int_C \log\frac{d\Q^n}{d\mathbb{B}}|_{\mathcal{F}^n}d\Q.
\end{split}\end{equation}

In the remainder we will show that
$\frac{1}{2^n}\int _C \log \frac{d\Q^{2n}}{d\mathbb{B}}|_{\mathcal{F}^{2^n}} d\Q$ 
converges to the right-hand side of \eqref{eq13}. 
The statement then follows, since $H(\Q|\Q^n)|_{\mathcal{F}^n} $ is non-negative. We assume wlog.\ $ H(\Q|B)|_{\mathcal{F}^{2^n}}< \infty$ for all $ n\in \N $ and thus \eqref{eq14} holds for all $ n \in \N$. 

Let $\mathcal{P}_n:= \sigma (A \times (s,t] \, | \, s<t \in \{ \frac{0}{n}, \frac{1}{n}, \dots , \frac{n}{n}\}, A \in \sigma(X_{\frac{0}{n}},X_{\frac{1}{n}}, \dots, X_s))$. Since $G$ is $\lambda \otimes  \Q$-integrable, we can define, 
\begin{align}
\label{eq:aprox_predict}
G_n:= \E _{\lambda \otimes  \Q}[G|\mathcal{P}_n].
\end{align}
We have for every $n \in \N$:
\begin{equation*}\begin{split}\textstyle 
G_n(t)=n \sum _{k=1}^n \E_{\Q}\Big[ \int^{\frac{k}{n}}_{\frac{k-1}{n}}G(s,X)ds| X_0, \dots, X_{\frac{k-1}{n}}\Big](X)\mathbbm{1}_{(\frac{k-1}{n},\frac{k}{n}]}(t).
\end{split}\end{equation*}
Note that this means,
\begin{equation*}\begin{split}\textstyle 
X_{\frac{k}{n}}-X_{\frac{k-1}{n}}|X_0,\dots,X_{\frac{k-1}{n}} \sim_{\Q^n} \mathcal{N}_l\Big( 0_l, \frac{1}{n}G_n \big(\frac{k}{n},X\big)\Big).
\end{split}\end{equation*}
Indeed, this follows easily from,
\begin{equation*}\begin{split}\textstyle 
\frac{1}{n} G_n\big(\frac{k}{n},X\big)
&=\textstyle  \E_{\Q}\Big[ \E_{\Q}\Big[ \int_{\frac{k-1}{n} }^{\frac{k}{n}}G(s,X)ds | \mathcal{F}_{\frac{k-1}{n}}\Big]|X_0,\dots,X_{\frac{k-1}{n}}\Big]\\
&=\textstyle \E_{\Q}\Big[ \E_{\Q}\Big[ \Delta^n_k X (\Delta^n_k X)^T | \mathcal{F}_{\frac{k-1}{n}}\Big]|X_0,\dots,X_{\frac{k-1}{n}}\Big]\\
&=\textstyle \E_{\Q}\big[\Delta^n_k X (\Delta^n_k X)^T | X_0,\dots,X_{\frac{k-1}{n}}\big] 
\end{split}\end{equation*}
and since we have assumed $\Q|_{\mathcal{F}^n}\ll \mathbb{B}|_{\mathcal{F}^n}$ we have $G(\frac{k}{n},X)>0$ $\Q$-a.s. We can now compute the Radon-Nikodym density of $\Q^n$ with respect to $\mathbb{B}|_{\mathcal{F}^n}$, 
\begin{equation*}\begin{split}\textstyle 
\frac{d\Q^n}{d\mathbb{B}}|_{\mathcal{F}^n}(X)&= \textstyle \prod _{k=1}^n \frac{d\mathcal{L}_{\Q^n}(\Delta_k^n X|X_0,\dots,X_{\frac{k-1}{n}})}{d\mathcal{L}_B(\Delta_k^n X|X_0,\dots,X_{\frac{k-1}{n}})}(\Delta_k^n X)\\
&=\textstyle \prod _{k=1}^n \frac{d \mathcal{N}_l(0_l, \frac{1}{n}G_n(\frac{k}{n},X))}{d\mathcal{N}_l(0_l, \frac{1}{n}\mathcal{I} )}(\Delta_k^n X)\\
&=\textstyle \prod _{k=1}^n  \sqrt{\frac{\det (\frac{1}{n}\mathcal{I})}{\det (\frac{1}{n}G_n)}} \, \frac{\exp (-\frac{1}{2}(\Delta_k^n X)^T (\frac{1}{n}G_n)^{-1}\Delta_k^n X)}{\exp (-\frac{1}{2}(\Delta_k^n X)^T (\frac{1}{n}\mathcal{I})^{-1}\Delta_k^n X)}\\
&=\textstyle \prod _{k=1}^n  \frac{1}{\sqrt{\det G_n}} \exp (-\frac{n}{2}(\Delta_k^n X)^T G^{-1}_n\Delta_k^n X) + \frac{n}{2}(\Delta_k^n X)^T \mathcal{I}^{-1}\Delta_k^n X)).
\end{split}\end{equation*}
Taking the logarithm, integrating with respect to $\Q$ and dividing by $n$, we get:
\begin{equation}\label{eq15}\begin{split}
&\textstyle \frac{1}{n}\int_C \log \frac{d\Q^n}{d\mathbb{B}}|_{\mathcal{F}^n} d\Q\\
=&\textstyle \frac{1}{n} \sum_{k=1}^n \E_{\Q} \Big[ \frac{n}{2} (\Delta_k^nX)^T( \mathcal{I})^{-1}\Delta_k^nX - \frac{n}{2} (\Delta_k^nX)^T( G_n)^{-1}\Delta_k^nX -\frac{1}{2} \log (\det G_n)\Big]\\
=&\textstyle \frac{1}{n} \sum_{k=1}^n \E_{\Q} \Big[ \frac{1}{2} tr(n\mathcal{I}^{-1}\Delta_k^nX(\Delta_k^nX)^T )-\frac{1}{2} tr(nG^{-1}_n\Delta_k^nX(\Delta_k^nX)^T )-\frac{1}{2} \log (\det G_n)\Big].
\end{split}\end{equation}
By the tower property we have \begin{equation*}\begin{split}
&\textstyle \E_{\Q}\Big[ n G_n\big(\frac{k}{n}, X\big)^{-1}\Delta_k^nX(\Delta_k^nX)^T\Big] \\
=&\textstyle \E_{\Q}\Big[ n G_n\big(\frac{k}{n}, X\big)^{-1} \E_{\Q}\Big[\Delta_k^nX(\Delta_k^nX)^T|X_0, \dots, X_{\frac{k-1}{n}} \Big]\Big]
=\mathcal{I}.
\end{split}\end{equation*}
Continuing in \eqref{eq15}, we find 
\begin{equation*}\begin{split}\textstyle 
\frac{1}{n}\int_C \log \frac{d\Q^n}{d\mathbb{B}}|_{\mathcal{F}^n} d\Q&\textstyle = \frac{1}{n} \sum_{k=1}^n \E_\Q \left [ \frac{1}{2}tr\Big(G_n\big(\frac{k}{n},X\big) \Big) -\frac{l}{2}-\frac{1}{2} \log \Big(\det G_n\big(\frac{k}{n},X\big) \Big)\right ]\\
&\textstyle =\frac{1}{n} \sum_{k=1}^n \E_{\Q}\left[ F_l\Big(G_n\big(\frac{k}{n},X\big)\Big) \right]\\
&\textstyle =\E_{\Q}\left[\int_0^1 F_l\Big(G_n\big(s,X\big)\Big) ds \right],
\end{split}\end{equation*}
where we have used that $t \rightarrow G_n(t,X)$ is constant on intervals of the form $(\frac{k-1}{n}, \frac{k}{n}]$. 
We now use that $F_l$ is convex and apply Jensen’s inequality to obtain the bound
\begin{equation*}\begin{split}\textstyle 
E_{\Q}\left[\int_0^1 F_l\left(G_n\left(s,X\right)\right)ds \right]&=\textstyle  E_{\Q}\left[\int_0^1 F_l(\E_{\lambda \otimes \Q}[G|\mathcal{P}_n] (s,X))ds \right]\\& \textstyle \leq E_{\Q}\left[\int_0^1\E_{\lambda \otimes \Q}[F_l(G)|\mathcal{P}_n] (s,X)ds \right] \\ &= \textstyle E_{\Q}\left[\int_0^1 F_l(G(s,X))ds \right], 
\end{split}\end{equation*}
where the last equality is due to Fubini’s theorem and the tower property.

Now we restrict ourselves to the sub-sequence of powers of 2. Note that $(\mathcal{P}_{2^n})_{n\geq 1}$ is a sequence of sigma algebras that increases to the predictable sigma. Therefore, $G_{2^n}$  is a closed martingale with respect to $(\mathcal{P}_{2^n})_{n\in \N}$  and from the convergence theorem for uniformly integrable martingales, it follows that $\lambda \otimes \Q$-a.s., 
\begin{equation*}\begin{split}
G_{2^n}(t,X) \xrightarrow{n\rightarrow \infty} G(t,X). 
\end{split}\end{equation*}
With Fatou’s Lemma and $F \geq 0$ and the upper bound from before we get, 
\begin{equation*}\begin{split}\textstyle 
E_{\Q}\left[\int_0^1 F_l(G(s,X))ds \right]&=\textstyle 
E_{\Q}\left[\int_0^1 \liminf_{n\rightarrow \infty}F_l(G_{2^n}(s,X))ds \right]\\&\textstyle \leq  \liminf_{n\rightarrow \infty}E_{\Q}\left[\int_0^1 F_l(G_{2^n}(s,X))ds \right]\\
&\textstyle \leq \limsup_{n\rightarrow \infty}E_{\Q}\left[\int_0^1 F_l(G_{2^n}(s,X))ds \right]\\& \textstyle \leq E_{\Q}\left[\int_0^1 F_l(G(s,X))ds \right].
\end{split}\end{equation*}
We conclude, 
\begin{equation*}\begin{split}\textstyle 
\lim_{n\rightarrow \infty} \frac{1}{2^n} \int_C \log \frac{d\Q^{2^n}}{d\mathbb{B}} |_{\mathcal{F}^{2^n}}d\Q= \E_{\Q}\left[\int_0^1 F_l(G(s,X)) ds\right]. 
\end{split}\end{equation*}

 \end{proof}
 \begin{rmk}
 Note that all arguments in the proof of Theorem \ref{thm2.3} 
still hold, if the sequence  $(2^n)_{n \in \N}$ is replaced by $(p^n)_{n\in \N}$ for any $p \in \N$ and $p\geq 2$.
\end{rmk}

In dimension 1 (i.e.\ for $l=1$), beyond special examples, it was first established in \cite{BaUn22} that the inequality in Theorem \ref{thm2.3} is in fact an equality in case that $\mathbb Q$ is the law of a martingale diffusion SDE with a very regular diffusion coefficient. In the present paper we do not aim to obtain a generalization of that result. However, the closed-form formulae that we will obtain provide positive evidence that a similar result is to be expected also in higher dimensions.

\section{On the convex lower semicontinuous envelope}

The aim of this part is to prove Theorem \ref{thm:hull}. We define
\[\textstyle L_l(\mathbb Q): = \mathbb{E}_{\mathbb Q}\left[\int_0^1 F_l(\Sigma_t) dt\right ],\]
in case $\mathbb Q\in \mathcal M_l^{2,abs}$ and under $\mathbb Q$ we have $\dot{\langle X \rangle_t}=\Sigma_t$; otherwise we set  $L_l(\mathbb Q): =+\infty$.

In the next two lemmas we collect important information about this functional.

\begin{lem}\label{lem:topo_prop}
    $L_l$ is convex and weakly lower semicontinuous.
\end{lem}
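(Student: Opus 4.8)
The plan is to prove convexity and weak lower semicontinuity of $L_l$ separately, exploiting the affine structure of the functional hinted at in the remark after Theorem \ref{thm:hull}. For \textbf{convexity}, the key observation is that $L_l$ is in fact affine on its effective domain once we represent the integrand via a fixed predictable process. More concretely, given $\mathbb Q_0,\mathbb Q_1\in\mathcal M_l^{2,abs}$ and $\theta\in(0,1)$, let $\mathbb Q_\theta=(1-\theta)\mathbb Q_0+\theta\mathbb Q_1$. One checks first that $\mathbb Q_\theta\in\mathcal M_l^{2,abs}$ (the canonical process remains a square-integrable martingale, and the quadratic covariation of a mixture of martingale laws is still absolutely continuous in $t$). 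The quadratic covariation $\langle X\rangle$ is a pathwise (or ``in probability'') limit of Riemann-type sums that does not depend on the measure, so $\langle X\rangle_t$ is, up to a common $\mathbb Q_\theta$-null set, the same adapted process computed under $\mathbb Q_0$ or $\mathbb Q_1$. Hence $\int_0^1 F_l(\Sigma_t)\,dt$ is a single $\mathcal F$-measurable functional $\Phi$ of the path (independent of which of the three measures we integrate against), and $L_l(\mathbb Q_\theta)=\mathbb E_{\mathbb Q_\theta}[\Phi]=(1-\theta)\mathbb E_{\mathbb Q_0}[\Phi]+\theta\mathbb E_{\mathbb Q_1}[\Phi]=(1-\theta)L_l(\mathbb Q_0)+\theta L_l(\mathbb Q_1)$. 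When one of the $\mathbb Q_i$ lies outside $\mathcal M_l^{2,abs}$, the right-hand side is $+\infty$ and there is nothing to prove; one just needs to note that in that case $\mathbb Q_\theta\notin\mathcal M_l^{2,abs}$ either (mixing with a measure whose quadratic covariation is not absolutely continuous, or not square integrable, destroys the property), so both sides are $+\infty$. This gives convexity (indeed affinity) on all of $\mathcal M_1(C)$.

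For \textbf{weak lower semicontinuity}, the plan is to exhibit $L_l$ as a supremum of weakly continuous (or at least weakly l.s.c.) affine functionals, which automatically yields l.s.c. The natural route: $F_l$ is itself a supremum of affine functions of the matrix entries (it is convex and finite on the positive semidefinite cone, hence the pointwise supremum of its supporting hyperplanes), and $\Sigma_t$ can be approximated from below, in a suitable sense, by normalized increments of $X$. Concretely, for a partition point $k/n$ the quantity $n\,\Delta_k^nX(\Delta_k^nX)^T$ has $\mathbb Q$-conditional expectation converging to $\Sigma_t$, and one can build, out of finitely many coordinates $X_{t_1},\dots,X_{t_m}$ and a fixed smooth test matrix field $A(\cdot)$, bounded continuous functionals
\[
\Psi_{A,n}(\omega):=\int_0^1 \Big(\tfrac12\,\mathrm{tr}\big(A(t)\,n\,\Delta^n_{\lceil nt\rceil}\omega\,(\Delta^n_{\lceil nt\rceil}\omega)^T\big)-g(A(t))\Big)dt,
\]
where $g$ is the concave conjugate of $\tfrac12\mathrm{tr}$, such that $\mathbb E_{\mathbb Q}[\Psi_{A,n}]\to \mathbb E_{\mathbb Q}[\int_0^1(\tfrac12\mathrm{tr}(A(t)\Sigma_t)-g(A(t)))dt]$ for every $\mathbb Q\in\mathcal M_l^{2,abs}$, and taking the supremum over $A$ recovers $L_l(\mathbb Q)$ via the duality for $F_l$. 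Since each $\Psi_{A,n}$ is bounded and continuous on $C$, $\mathbb Q\mapsto\mathbb E_{\mathbb Q}[\Psi_{A,n}]$ is weakly continuous; the double supremum over $(A,n)$ is then weakly l.s.c. The remaining point is to verify that this double supremum equals $L_l(\mathbb Q)$ and not something smaller on measures outside $\mathcal M_l^{2,abs}$ — there one argues that the supremum is $+\infty$ (if the quadratic covariation is not absolutely continuous, the Riemann sums for suitable singular choices of $A$ blow up; if $X$ is not a martingale or not square integrable the relevant expectations are $+\infty$). An alternative, perhaps cleaner, route is to use the lower-semicontinuity of relative entropy together with the identity from Theorem \ref{thm2.3} and Proposition \ref{thm2.2}, but the dual-representation argument is more self-contained.

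I expect the \textbf{main obstacle} to be the l.s.c. part, specifically the passage from the pathwise quadratic-covariation limit (which holds in probability under a fixed $\mathbb Q$) to a \emph{uniform} representation by genuinely weakly continuous functionals on $C$ valid simultaneously for all $\mathbb Q$. The subtle issues are: (i) controlling the approximation of $\int_0^1 F_l(\Sigma_t)\,dt$ by the finite-dimensional surrogates $\Psi_{A,n}$ uniformly enough to justify interchanging suprema and expectations — this needs the duality $F_l(\Sigma)=\sup_A\{\tfrac12\mathrm{tr}(A\Sigma)-g(A)\}$ combined with a monotone/measurable-selection argument so that the supremum over matrix \emph{fields} $A(\cdot)$ is attained pointwise in $t$; and (ii) making sure the construction correctly forces $L_l=+\infty$ off the domain $\mathcal M_l^{2,abs}$, which requires care because weak limits can create atoms in the quadratic covariation or lose the martingale property, and one must show the test functionals detect exactly this. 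Convexity, by contrast, should be essentially immediate once the measure-independence of $\langle X\rangle$ is invoked.
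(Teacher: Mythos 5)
Your convexity argument is correct and rests on the same mechanism the paper uses (cf.\ Lemma \ref{lem:Karandakar} and Remark \ref{rem:Karandakar}): Karandikar's pathwise construction of $\langle X\rangle$ makes $\int_0^1 F_l(\hat\Sigma_t)\,dt$ a single measurable path functional independent of the reference measure, so $L_l$ is affine where finite, and the $+\infty$ convention off $\mathcal M_l^{2,abs}$ handles the rest. That part is fine, and in fact more explicit than the paper's one-line remark.

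The lower semicontinuity part has a genuine gap. The assertion ``since each $\Psi_{A,n}$ is bounded and continuous on $C$, $\mathbb Q\mapsto\mathbb E_{\mathbb Q}[\Psi_{A,n}]$ is weakly continuous'' is false as written: $\Psi_{A,n}$ is quadratic in the increments of $\omega$, hence unbounded, and expectations of unbounded continuous functionals are not weakly continuous. More importantly, this cannot be repaired by truncation in the direction you need. The dual representation $F_l(\Sigma)=\sup_A\{\tfrac12\mathrm{tr}(A\Sigma)-g(A)\}$ necessarily involves slopes $A=\tfrac12(\mathcal I-\Sigma_0^{-1})$ with negative eigenvalues; these are exactly the test matrices that detect the blow-up of $-\log\det$ near singular $\Sigma$, so you cannot discard them. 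For such $A$ the map $\mathbb Q\mapsto\mathbb E_{\mathbb Q}\bigl[\mathrm{tr}\bigl(A\,n\,\Delta_k^nX(\Delta_k^nX)^T\bigr)\bigr]$ is at best weakly \emph{upper} semicontinuous, because $\mathbb Q\mapsto\mathbb E_{\mathbb Q}[|\Delta_k^nX|^2]$ is only lower semicontinuous under weak convergence (second moments can drop in the limit). Hence your double supremum is not a supremum of weakly l.s.c.\ functionals and the conclusion does not follow. This is precisely the difficulty the paper sidesteps: it regularizes to $\tilde F_l^\epsilon(\Sigma)=\tfrac12(\mathrm{tr}(\Sigma)-l-\log\det(\Sigma+\epsilon\mathcal I))$, invokes \cite[Theorem 8.3]{BaPa22} on lower semicontinuity of convex integral functionals of semimartingale characteristics, and then takes $\sup_{\epsilon>0}$. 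To make your self-contained route work you would have to exploit the martingale structure further (e.g.\ uniform integrability of the squared increments along convergent sequences of martingale laws, or a compensation of the quadratic terms) to control the upper-semicontinuous pieces. Your proposed fallback via Theorem \ref{thm2.3} and Proposition \ref{thm2.2} also does not close the gap, since Gantert's bound is only an inequality in general and $h_l$ itself is not known to be l.s.c.\ --- Theorem \ref{thm:hull} is precisely about that distinction.
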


\begin{proof}
Recall that  $F_l(\Sigma):= \frac{1}{2}(tr(\Sigma)-l-\log(\det \Sigma))$. We can extend $L_l$ to the set of continuous semimartingale laws, by letting $\tilde F_l(b,\Sigma):=+\infty$ if $b\in\mathbb R^l\backslash \{0\}$ and otherwise $\tilde F_l(0,\Sigma):= F_l(\Sigma)$. The extension is then denoted $\tilde L_l(\mathbb Q): = \mathbb{E}_{\mathbb Q}\left[\int_0^1 \tilde F_l(b,\Sigma_t) dt\right ]$, with the understanding that now the canonical process is allowed to have a drift $\int_0^\cdot b_tdt$ under $\mathbb Q$. We also introduce 
 for $\epsilon>0$ the function    $\tilde F_l^\epsilon(b,\Sigma):= \frac{1}{2}(tr(\Sigma)-l-\log(\det( \Sigma+\epsilon\mathcal I)))$, so $\tilde F_l^\epsilon\nearrow \tilde F_l$ pointwise. Furthermore   $\tilde F_l^\epsilon$ is jointly convex, since $\log\det $ is concave (see e.g.\ the proof of Lemma \ref{F_l}). We can then verify all the assumptions behind \cite[Theorem 8.3]{BaPa22}, showing that $\tilde L_l^\epsilon$ is lower semicontinuous. Hence, by taking supremum in $\epsilon>0$, also $\tilde L_l$ is lower semicontinuous. This in turn implies that $L_l$ is lower semicontinuous. The convexity of $L_l$ is immediate from the concavity of $\log\det $.
\end{proof}

We recall that $\lambda$ denotes the Lebesgue measure on $[0,1]$.

\begin{lem}\label{lem:Karandakar}
Suppose $M$ is an $\mathbb R^l$-valued martingale defined on some filtered probability space $(\tilde \Omega,\tilde{\mathcal F},(\tilde{\mathcal F}_t)_t,\tilde{\mathbb P})$, such that $\mathcal L(M) \in \mathcal M_l^{2,abs}$. Suppose further that $dM_t=\sigma_tdW_t$   for some $l$-dimensional  $(\tilde{\mathcal F}_t)_t$-Brownian motion $W$ and matrix-valued adapted process $(\sigma_t)_t$. Then
\[\textstyle L_l(\mathcal L(M) ) = \tilde{\mathbb E}\left[\int_0^1 F_l(\sigma_t\sigma_t^T)dt \right].\]
\end{lem}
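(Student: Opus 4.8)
The plan is to reduce the statement to a purely measure-theoretic identity about the quadratic variation of $M$, exploiting that $L_l$ depends on $\mathcal{L}(M)$ only through the law of the density of $\langle X\rangle$ with respect to Lebesgue measure. First I would record the basic fact that $d\langle M\rangle_t = \sigma_t\sigma_t^T\,dt$ under $\tilde{\mathbb{P}}$: this follows directly from the It\^o isometry / the formula $\langle \int\sigma\,dW\rangle_t = \int_0^t \sigma_s\sigma_s^T\,ds$ applied componentwise. Consequently the process $\Sigma_t := \dot{\langle X\rangle}_t$ (which appears in the definition of $L_l$, and is a functional of the canonical path, defined say via a Karandikar-type pathwise limit so as to be $\mathcal{F}_t$-measurable) satisfies $\Sigma_t(M(\tilde\omega)) = \sigma_t(\tilde\omega)\sigma_t(\tilde\omega)^T$ for $\lambda\otimes\tilde{\mathbb{P}}$-a.e.\ $(t,\tilde\omega)$. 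This is the only genuine step; everything else is bookkeeping with the pushforward.

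Next I would assemble the chain of equalities. Since $\mathcal{L}(M)\in\mathcal M_l^{2,abs}$, by definition $L_l(\mathcal L(M)) = \mathbb{E}_{\mathcal L(M)}\big[\int_0^1 F_l(\Sigma_t)\,dt\big]$. By the change-of-variables (transfer) theorem for the pushforward measure $\mathcal L(M) = M_*\tilde{\mathbb{P}}$, and Tonelli's theorem to move the $dt$-integral inside the expectation (legitimate because $F_l\geq 0$ by Lemma \ref{F_l}(4)), this equals $\tilde{\mathbb{E}}\big[\int_0^1 F_l(\Sigma_t(M))\,dt\big]$. Substituting the identification $\Sigma_t(M) = \sigma_t\sigma_t^T$ $\lambda\otimes\tilde{\mathbb{P}}$-a.e.\ gives $\tilde{\mathbb{E}}\big[\int_0^1 F_l(\sigma_t\sigma_t^T)\,dt\big]$, which is the claimed formula. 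One should note in passing that $\sigma_t\sigma_t^T$ is symmetric positive semidefinite, and in fact positive definite $\lambda\otimes\tilde{\mathbb{P}}$-a.e.\ (otherwise $L_l(\mathcal L(M))=+\infty$ and $\det(\sigma_t\sigma_t^T)=0$ on a positive-measure set, so both sides are $+\infty$ and the identity still holds), so that $F_l$ is being evaluated where it is finite-valued and the manipulations are unambiguous.

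The main obstacle — really the only subtlety worth dwelling on — is the measurability/consistency issue lurking in the phrase ``$\Sigma_t$'' in the definition of $L_l$: one must be sure that the matrix-valued density of $\langle X\rangle$ under $\mathcal{L}(M)$ can be realized as a fixed measurable functional of the canonical path (this is where Remark \ref{rem:Karandakar} and the Karandikar pathwise construction of quadratic variation enter), so that evaluating it along $M$ is meaningful and yields precisely the quadratic variation of $M$ computed in the original space. Once that pathwise functional is fixed, the equality $\Sigma_t(M_\cdot(\tilde\omega)) = \dot{\langle M\rangle}_t(\tilde\omega)$ holds because quadratic variation is invariant under the measure-preserving identification of $(\tilde\Omega,\tilde{\mathbb{P}})$-paths with $(C,\mathcal L(M))$-paths, and then $\dot{\langle M\rangle}_t = \sigma_t\sigma_t^T$ as above. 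I would state this identification cleanly as a preliminary sentence and then let the three-line computation above conclude the proof.
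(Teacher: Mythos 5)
Your proposal is correct and follows essentially the same route as the paper's proof: the key step in both is the Karandikar pathwise construction of the quadratic (co)variation, which yields a fixed measurable functional $\hat\Sigma$ of the canonical path serving as a version of $\dot{\langle X\rangle}$ under every law in $\mathcal M_l^{2,abs}$, so that $\hat\Sigma\circ M=\sigma\sigma^T$ in the $\lambda\otimes\tilde{\mathbb P}$-sense and the identity follows by transferring the expectation through the pushforward. Your additional remarks on Tonelli and on the degenerate case where both sides are $+\infty$ are harmless elaborations of what the paper leaves implicit.
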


\begin{proof}
Recall that it is possible to construct the quadratic variation of a continuous martingale in pathwise fashion, and by polarization the quadratic covariation can also be constructed this way. Hence this applies to continuous martingales taking values in $\mathbb R^l$. The construction therefore does not depend on the filtration or the reference measue. See Karandikar's \cite[Theorem and Remark 5]{Ka83}. As a consequence, if $X$ denotes the canonical process of continuous $\mathbb R^l$-valued paths, then  
\begin{align}\label{eq:formula_dens_qv}
\hat \Sigma_s:=\liminf_{n\to\infty} n\{\langle X \rangle_s - \langle X \rangle_{(s-1/n)_+}  \},
\end{align}
is well-defined $\mathbb Q$-a.s.\ simultaneously for all $\mathbb Q$ continuous martingale measure. If now $\mathbb Q\in \mathcal M_l^{2,abs}$, then $\mathbb Q$-a.s.\ for all $t\in[0,1]$:
\[\textstyle \langle X \rangle_t= \int_0^t \hat \Sigma_s ds, \]
i.e.\ $\hat \Sigma$ is a version (in the $\lambda \times \mathbb Q$-sense) of the density of the quadratic covariation of $X$. If $\mathbb Q:=\text{Law}(M)$ we conclude that 
$\sigma\sigma^T = \hat \Sigma \circ M $ 
in the $\lambda\times \tilde{\mathbb P}$-sense. Hence
\[\textstyle \tilde{\mathbb E}\left[\int_0^1 F_l(\sigma_t\sigma_t^T)dt \right] = \mathbb{E}_{\mathcal L(M)} \left[\int_0^1  F_l(\hat \Sigma_t) dt \right]= L_l(\mathcal L(M)). \]
\end{proof}

\begin{rmk}\label{rem:Karandakar}
    The previous proof establishes that the functional $L_l$ is affine. Indeed, if $\Sigma^{\mathbb Q}$ is (a version of) the density of the quadratic covariation under $\mathbb Q$ then the proof shows that $\mathbb Q(\lambda\{t\in[0,1]:\Sigma_t^{\mathbb Q}\neq\hat \Sigma_t\})=0$, with $\hat \Sigma$ as in \eqref{eq:formula_dens_qv}.
\end{rmk}

With the aim of proving Theorem \ref{thm:hull}, the following auxiliary  class of martingales proves very useful. Let $N\in\mathbb N$ be fixed. We say that a martingale measure $\mathbb Q $ is in $\mathcal M_l^2(N)$, if it is the law of a martingale $M$ obtained as follows: for some measurable process $(\sigma_t)_t$, square-integrable, with values on the set of symmetric positive definite matrices of size $l\times\l$, such that
$$\sigma_t \text{ is $\mathcal F_{(t-\frac{1}{N})\vee 0}$-measurable},$$
we have $M_t=M_0+\int_0^t\sigma_sdB_s$. The key observation is that then
$$\textstyle \text{Law}(M_{k/N}|\mathcal F_{(k-1)/N})=\mathcal N\left(M_{(k-1)/N},\int^{k/N}_{(k-1)/N}\sigma^2_udu\right),$$
as $\sigma_u$ is $\mathcal F_{(k-1)/N}$-measurable for $u\leq k/N$.

\begin{lem}\label{lem:equ_case_delay}
  Let $\mathbb Q \in \mathcal M_l^2(N)$ with $\mathbb Q = \text{Law}(M)$ as above. Then
        \begin{equation}\label{eq:Gantert_h_eq_N}\textstyle 
     h_{l}(\mathbb{Q}|\mathbb{B}^l)= \mathbb{E}\left[\int_0^1 F_l(\sigma_t^2) dt\right ].
 \end{equation}
\end{lem}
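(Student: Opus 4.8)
The plan is to prove the two inequalities separately, the lower bound being essentially free. Since $\sigma$ takes values in the symmetric positive definite matrices, $\langle M\rangle_t=\int_0^t\sigma_s\sigma_s^{T}\,ds=\int_0^t\sigma_s^{2}\,ds$, so $\mathbb Q=\mathrm{Law}(M)\in\mathcal M_l^{2,abs}$, and Lemma \ref{lem:Karandakar} identifies $L_l(\mathbb Q)=\mathbb E\big[\int_0^1 F_l(\sigma_t^2)\,dt\big]$. Gantert's inequality in $\mathbb R^l$ (Theorem \ref{thm2.3}) then yields $h_l(\mathbb Q|\mathbb B^l)\ge L_l(\mathbb Q)=\mathbb E\big[\int_0^1 F_l(\sigma_t^2)\,dt\big]$. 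In particular we may assume $\mathbb E\big[\int_0^1 F_l(\sigma_t^2)\,dt\big]<\infty$, as otherwise this bound already forces equality with $+\infty$.

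For the reverse inequality I would prove the stronger fact that $\tfrac1m\,H(\mathbb Q|\mathbb B^l)\big|_{\mathcal F^m}\le \mathbb E\big[\int_0^1 F_l(\sigma_t^2)\,dt\big]$ for \emph{every} integer $m\ge N$; evaluating the defining $\liminf$ along $m=2^n\to\infty$ and combining with the lower bound then gives $h_l(\mathbb Q|\mathbb B^l)=\mathbb E\big[\int_0^1 F_l(\sigma_t^2)\,dt\big]$ (the $\liminf$ being, incidentally, a genuine limit). Fix $m\ge N$ and split $\{1,\dots,m\}$ into consecutive blocks $B_1,\dots,B_J$ each of cardinality at most $\lfloor m/N\rfloor$. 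Setting $\tau_i:=|B_1\cup\cdots\cup B_i|/m$, each block occupies a time interval $(\tau_{i-1},\tau_i]$ of length $\le 1/N$, so that $(s-\tfrac1N)\vee 0\le\tau_{i-1}$ for every $s\in(\tau_{i-1},\tau_i]$; hence $\sigma_s$ is $\mathcal F_{\tau_{i-1}}$-measurable throughout the $i$-th block. Consequently, conditionally on $\mathcal F_{\tau_{i-1}}$, the increments $(\Delta_k^m M)_{k\in B_i}$ are independent with $\Delta_k^m M\,|\,\mathcal F_{\tau_{i-1}}\sim\mathcal N\big(0,\int_{(k-1)/m}^{k/m}\sigma_s^2\,ds\big)$, being Wiener integrals of the (now frozen) integrand $\sigma$ against disjoint stretches of $B$.

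Next I would run the chain rule for relative entropy along the increasing $\sigma$-algebras $\mathcal G_i:=\sigma\big(M_0,(M_{k/m}:k/m\le\tau_i)\big)$, whose top element generates $\mathcal F^m$ and which satisfy $\mathcal G_{i-1}\subseteq\mathcal F_{\tau_{i-1}}$. In the $i$-th summand the new information is the within-block increment vector; its law under $\mathbb B^l$ is $\bigotimes_{k\in B_i}\mathcal N(0,\tfrac1m\mathcal I)$ regardless of the conditioning, while under $\mathbb Q$ its $\mathcal G_{i-1}$-conditional law is, by the tower property, the $\mathcal G_{i-1}$-conditional average of the Gaussian products $\nu_i:=\bigotimes_{k\in B_i}\mathcal N\big(0,\int_{(k-1)/m}^{k/m}\sigma_s^2\,ds\big)$. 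By joint convexity of relative entropy (conditional Jensen applied to $\mu\mapsto H(\mu\,\|\,\bigotimes_{k\in B_i}\mathcal N(0,\tfrac1m\mathcal I))$) the $i$-th summand is dominated by $\mathbb E\big[H(\nu_i\,\|\,\bigotimes_{k\in B_i}\mathcal N(0,\tfrac1m\mathcal I))\big]=\sum_{k\in B_i}\mathbb E\big[F_l\big(m\int_{(k-1)/m}^{k/m}\sigma_s^2\,ds\big)\big]$, using $H(\mathcal N(0,\Sigma_1)\|\mathcal N(0,\Sigma_2))=F_l(\Sigma_2^{-1}\Sigma_1)$. Summing over $i$ and dividing by $m$ turns the bound into $\mathbb E\big[\int_0^1 F_l\big(\mathbb E_\lambda[\sigma^2\,|\,\mathcal B_m](t)\big)\,dt\big]$, which by the conditional Jensen inequality for the convex function $F_l$ (Lemma \ref{F_l}) is $\le\mathbb E\big[\int_0^1 F_l(\sigma_t^2)\,dt\big]$. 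This is the desired estimate.

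The crux — and the only place where the lag in the definition of $\mathcal M_l^2(N)$ is used — is the convexity step: the chain rule for $H(\,\cdot\,|\,\cdot\,)|_{\mathcal F^m}$ conditions on the coarse grid $\sigma$-algebra $\mathcal G_{i-1}$, under which the conditional law of the increments is merely a mixture of Gaussians, whereas it is exactly Gaussian only after conditioning on the richer $\mathcal F_{\tau_{i-1}}$; choosing blocks of temporal length $\le 1/N$ is precisely what makes $\sigma$ already $\mathcal F_{\tau_{i-1}}$-measurable on each block, and joint convexity of the relative entropy lets one replace the finer conditioning by the coarser one without loss. A minor technical point to keep in mind is that one needs $\sigma$, restricted to each $(\tau_{i-1},\tau_i]$, to be jointly $\mathcal F_{\tau_{i-1}}\otimes\mathcal B([\tau_{i-1},\tau_i])$-measurable, so that the conditional law of the relevant Wiener integral is genuinely $\mathcal N(0,\int\sigma^2)$; this holds for any measurable process adapted to the lagged filtration $(\mathcal F_{(t-1/N)\vee 0})_t$.
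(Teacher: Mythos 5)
Your proof is correct and follows essentially the same route as the paper's: the lower bound comes from Theorem \ref{thm2.3}, and the upper bound is obtained by conditioning the chain-rule decomposition of $H(\mathbb Q|\mathbb B^l)|_{\mathcal F^m}$ on the discrete past, using the $1/N$-lag to see that the conditional law given the \emph{continuous} filtration is exactly Gaussian, then applying convexity of the relative entropy in its first argument followed by convexity of $F_l$. Your only deviations — grouping several increments into blocks of temporal length $\le 1/N$ rather than treating single increments, and proving the estimate for every integer $m\ge N$ rather than only $m=2^n$ with $N=2^{n_0}$ — are harmless refinements of the same mechanism.
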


\begin{proof}
    By Theorem \ref{thm2.3} we only need to show the inequality ``$\leq $''. For notational simplicity we assume that $N=2^{n_0}$, but this is not important. We now take $n>n_0$. We will show that 
            \begin{equation}\label{eq:Gantert_h_eq_N_interim}\textstyle 
    2^{-n} H(\mathbb Q |\mathbb{B}^l)|_{\mathcal{F}^{2^n}}\leq \mathbb{E}\left[\int_0^1 F_l(\sigma_t^2) dt\right ],
 \end{equation}
 so the result follows by taking $\liminf_n$. By the additivity of the relative entropy (e.g.\ Lemma \ref{lem1.3}), 
 \[\textstyle H(\mathbb Q |\mathbb{B}^l)|_{\mathcal{F}^{2^n}} =\mathbb E\left[\sum_{k=0}^{2^{n}-1} A_k(M_{02^{-n}},\dots, M_{k2^{-n}})\right],\] 
 where
 \[A_k(x_0,\dots,x_k):= H(\mathcal L(M_{(k+1)2^{-n}}\,|\, M_{02^{-n}}=x_0,\dots,M_{k2^{-n}}=x_k) \, | \, \mathcal N( x_k, 2^{-n}\mathcal I ) ) ,\]
 where we applied the independent increments property of Brownian motion. By convexity of the relative entropy, the tower property of conditional expectations, and Jensen's inequality, we have the bound
 \begin{align*}
 &\mathbb E[ A_k(M_{02^{-n}},\dots, M_{k2^{-n}})] \\
 =& \mathbb E \big[ H(\mathbb E[\mathcal L(M_{(k+1)2^{-n}}\,| \mathcal F_{k2^{-n}}) \, |\, M_{02^{-n}},\dots,M_{k2^{-n}}] \, | \, \mathcal N( M_{k2^{-n}}, 2^{-n}\mathcal I ) )\big] \\
 \leq & \mathbb E \big[\mathbb E[H(\mathcal L(M_{(k+1)2^{-n}}\,| \mathcal F_{k2^{-n}}) \, |\, \mathcal N( M_{k2^{-n}}, 2^{-n}\mathcal I ))\, |\, M_{02^{-n}},\dots,M_{k2^{-n}}] \big] \\
 =& \mathbb E \big[H(\mathcal L(M_{(k+1)2^{-n}}\,| \mathcal F_{k2^{-n}}) \, |\, \mathcal N( M_{k2^{-n}}, 2^{-n}\mathcal I )) \big] .
 \end{align*}
 By construction $\mathcal L (M_{(k+1)2^{-n}}|\mathcal F_{k2^{-n}})=\mathcal N (M_{k2^{-n}},\int_{k2^{-n}}^{(k+1)2^{-n}}\sigma^2_udu )$, since $\sigma_u$ is $ \mathcal F_{k2^{-n}}$-measurable if $u\leq (k+1)2^{-n}$, as $(k+1)2^{-n}-2^{-n_0}\leq k2^{-n}$. Hence
 \begin{align*}
     & \mathbb E[ A_k(M_{02^{-n}},\dots, M_{k2^{-n}})] \\
      \leq & \textstyle  \mathbb E \left[H\left(\mathcal N\left(M_{k2^{-n}}, \int_{(k-1)2^{-n}}^{k2^{-n}}\sigma_u^2du \right ) \, |\, \mathcal N( M_{k2^{-n}}, 2^{-n}\mathcal I )\right ) \right] \\
      =&\textstyle  \mathbb E\left[ F_l\left( 2^n \int_{(k-1)2^{-n}}^{k2^{-n}}\sigma_u^2du  \right) \right],
 \end{align*}
 where for the last step we used \eqref{eq:prelim_ent_Gaussians}. By the convexity of $F_l$ (per Lemma \ref{F_l}) and Jensen's inequality, we find
 \[ \textstyle    \mathbb E[ A_k(M_{02^{-n}},\dots, M_{k2^{-n}})] \leq \mathbb E\left[ 2^n\int_{(k-1)2^{-n}}^{k2^{-n}} F_l\left(  \sigma_u^2  \right) du\right] ,\]
 so finally we obtained the desired $H(\mathbb Q |\mathbb{B}^l)|_{\mathcal{F}^{2^n}} \leq 2^n \mathbb E[\int_0^1  F_l(\sigma_u^2)du] $.
\end{proof}

The next result is technical but key for us:

\begin{lem}\label{lem:many_steps}
    For every $f$ continuous bounded function on $C$ (we write $f\in C_b(C)$), we have
    $$\textstyle \sup_{\mathbb Q\in\mathcal M_l^2}\left\{ \mathbb E_{\mathbb Q}[f(X)] - L_l(\mathbb Q) \right\} =\sup_{\substack{n\in\mathbb N , \mathbb Q\in\mathcal M_l^2(n)}}\left\{ \mathbb E_{\mathbb Q}[f(X)] - L_l(\mathbb Q) \right\}.$$
\end{lem}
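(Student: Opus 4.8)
The direction ``$\geq$'' is immediate since $\mathcal M_l^2(n)\subseteq\mathcal M_l^2$ for all $n$, so I would concentrate on ``$\leq$''. It suffices to fix $\mathbb Q\in\mathcal M_l^2$ and $\varepsilon>0$ and to exhibit $n\in\mathbb N$ and $\mathbb Q^{(n)}\in\mathcal M_l^2(n)$ with $\mathbb E_{\mathbb Q^{(n)}}[f(X)]-L_l(\mathbb Q^{(n)})\geq \mathbb E_{\mathbb Q}[f(X)]-L_l(\mathbb Q)-\varepsilon$. If $L_l(\mathbb Q)=+\infty$ there is nothing to do, since $\mathbb B^l\in\bigcap_n\mathcal M_l^2(n)$ makes the right-hand side at least $\mathbb E_{\mathbb B^l}[f(X)]>-\infty$. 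Assume then $L_l(\mathbb Q)<\infty$; then $\mathbb Q\in\mathcal M_l^{2,abs}$ with quadratic covariation density $\Sigma=\hat\Sigma$ (in the sense of Lemma \ref{lem:Karandakar}), positive definite $\lambda\times\mathbb Q$-a.e., and after enlarging the space we may represent the canonical process as $X=X_0+\int_0^\cdot\Sigma_s^{1/2}\,dW_s$ with $W$ an $l$-dimensional Brownian motion.

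\emph{Step 1 (reduction to a bounded, uniformly elliptic coefficient).} For $0<a<1<b$, let $\Sigma^{a,b}$ be $\Sigma$ with its eigenvalues clipped to $[a,b]$, and $\mathbb Q^{a,b}:=\mathcal L(X_0+\int_0^\cdot(\Sigma^{a,b}_s)^{1/2}\,dW_s)\in\mathcal M_l^{2,abs}$. As $F_1$ is decreasing on $(0,1)$ and increasing on $(1,\infty)$, each clipped eigenvalue can only decrease $F_l$ (Lemma \ref{F_l}), so $F_l(\Sigma^{a,b})\leq F_l(\Sigma)$ pointwise; together with Lemma \ref{lem:Karandakar} and dominated convergence (using $tr(\Sigma)\in L^1(\lambda\times\mathbb Q)$, which holds since $\mathbb Q\in\mathcal M_l^2$) this gives $L_l(\mathbb Q^{a,b})\to L_l(\mathbb Q)$ as $a\downarrow 0$, $b\uparrow\infty$; likewise $\mathbb E\int_0^1|(\Sigma^{a,b}_s)^{1/2}-\Sigma_s^{1/2}|^2\,ds\to0$, so by Doob's inequality and It\^o's isometry the associated martingales converge in $L^2(C)$ and hence $\mathbb Q^{a,b}\to\mathbb Q$ weakly, giving $\mathbb E_{\mathbb Q^{a,b}}[f(X)]\to\mathbb E_{\mathbb Q}[f(X)]$. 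Replacing $\mathbb Q$ by a suitable $\mathbb Q^{a,b}$ and $\varepsilon$ by $\varepsilon/2$, I may assume that $a\mathcal I\leq\Sigma_s\leq b\mathcal I$ $\lambda\times\mathbb Q$-a.e.

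\emph{Step 2 (delayed block averages) and limit.} For $n\geq 3$, define $(\sigma^{(n)}_t)^2:=n\int_{(k-3)/n}^{(k-2)/n}\Sigma_s\,ds$ for $t\in((k-1)/n,k/n]$, $k\geq 3$, and $(\sigma^{(n)}_t)^2:=\mathcal I$ for $t\in(0,2/n]$, with $\sigma^{(n)}_t$ the symmetric positive definite square root; note $a\mathcal I\leq(\sigma^{(n)}_t)^2\leq b\mathcal I$. Since $\Sigma=\hat\Sigma(X)$ is predictable, $\int_{(k-3)/n}^{(k-2)/n}\Sigma_s\,ds$ is $\mathcal F_{(k-2)/n}$-measurable, and $\mathcal F_{(k-2)/n}\subseteq\mathcal F_{(t-1/n)\vee 0}$ for all $t$ in block $k$; thus $\sigma^{(n)}$ obeys the delay constraint and $\mathbb Q^{(n)}:=\mathcal L(X_0+\int_0^\cdot\sigma^{(n)}_s\,dW_s)\in\mathcal M_l^2(n)$. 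By Lemma \ref{lem:Karandakar}, $L_l(\mathbb Q^{(n)})=\mathbb E\int_0^1 F_l((\sigma^{(n)}_t)^2)\,dt$ and $L_l(\mathbb Q)=\mathbb E\int_0^1 F_l(\Sigma_t)\,dt$. The process $(\sigma^{(n)}_\cdot)^2$ is the $2/n$-translate (up to the first two blocks, which contribute nothing because $F_l(\mathcal I)=0$) of the block average $\phi^{(n)}$, $\phi^{(n)}_s:=n\int_{(k-1)/n}^{k/n}\Sigma_u\,du$ on $((k-1)/n,k/n]$; by the Lebesgue differentiation theorem and $L^2$-continuity of translation, $(\sigma^{(n)}_\cdot)^2\to\Sigma$ in $L^2(\lambda\times\mathbb Q)$. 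Since $F_l$ and the matrix square root are bounded and continuous on $[a\mathcal I,b\mathcal I]$, bounded convergence yields $L_l(\mathbb Q^{(n)})\to L_l(\mathbb Q)$ and $\mathbb E\int_0^1|\sigma^{(n)}_s-\Sigma_s^{1/2}|^2\,ds\to0$; the latter, by Doob and It\^o applied to $M^{(n)}-X=\int_0^\cdot(\sigma^{(n)}_s-\Sigma_s^{1/2})\,dW_s$ (with $M^{(n)}:=X_0+\int_0^\cdot\sigma^{(n)}_s\,dW_s$), gives $\mathbb E[\sup_{t\leq 1}|M^{(n)}_t-X_t|^2]\to0$, so $\mathbb Q^{(n)}\to\mathbb Q$ weakly and $\mathbb E_{\mathbb Q^{(n)}}[f(X)]\to\mathbb E_{\mathbb Q}[f(X)]$. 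Choosing $n$ large finishes the proof.

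The part I expect to require the most care is the delay bookkeeping in Step 2: the block average of $\Sigma$ has to be delayed by \emph{two} grid steps, not one, for $\sigma^{(n)}_t$ to be $\mathcal F_{(t-1/n)\vee 0}$-measurable throughout each block, and one must then check that $\mathcal L(M^{(n)})$ genuinely belongs to $\mathcal M_l^2(n)$. The analytic heart is the $L^2$-convergence of these doubly delayed block averages to $\Sigma$, which is precisely where the Lebesgue differentiation theorem is used. Step 1 is essentially a convenience: it can be avoided via the Jensen estimate $F_l(\phi^{(n)}_s)\leq n\int_{(k-1)/n}^{k/n}F_l(\Sigma_u)\,du$ (whence $\int_0^1 F_l((\sigma^{(n)}_t)^2)\,dt\leq\int_0^1 F_l(\Sigma_s)\,ds$), the integrability of $tr(\Sigma)$, and a Pratt-type dominated convergence, but truncating makes every limit transparent.
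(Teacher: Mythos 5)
Your proof is correct and follows the same overall architecture as the paper's: reduce to a uniformly elliptic, bounded diffusion coefficient, then introduce a time delay of order $1/n$ to land in $\mathcal M_l^2(n)$, tracking the convergence of $L_l$ via Lemma \ref{lem:Karandakar} and the convergence of $\mathbb E_{\mathbb Q}[f(X)]$ via It\^o isometry, Doob's inequality and weak convergence. The technical route differs in two places. For the ellipticity reduction, the paper perturbs the martingale itself (taking $Z=X+\epsilon B$, so $dZ=(\sigma_t+\epsilon\mathcal I)dB_t$) and only afterwards truncates eigenvalues from above, whereas you clip the eigenvalues of $\Sigma$ on both sides at once and use the monotonicity of $F_1$ on either side of $1$ to get $F_l(\Sigma^{a,b})\le F_l(\Sigma)$ pointwise; this is more economical and hands you the dominating function for free. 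For the delay, the paper first mollifies $\sigma$ into a process with continuous paths (its Step 3) and only then shifts it by exactly $1/n$ (its Step 4), while you work directly with block averages of $\Sigma$ delayed by two grid steps. Your remark that a single-step delay of a piecewise-constant block average fails the $\mathcal F_{(t-1/n)\vee 0}$-measurability requirement for $t$ near the left endpoint of each block is exactly the issue the paper's continuity step is designed to sidestep, and your two-step shift resolves it correctly, with the Lebesgue differentiation theorem plus $L^2$-continuity of translation standing in for the paper's mollification argument. Both routes are valid; yours trades one approximation stage for slightly more careful measurability bookkeeping.
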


\begin{proof}
It suffices to show: for all $\mathbb Q\in \mathcal M_l^{2,abs}$ martingale measure with $L_l(\mathbb Q)<\infty$, there is $\mathbb Q_n\in\mathcal M_l^2(n) $ with $\mathbb Q_n\to \mathbb Q$ and $L_l(\mathbb Q) \geq \liminf_n L_l(\mathbb Q_n)$. Hence call $\Sigma$ the density of the quadratic covariation of $X$ under $\mathbb Q$     and $\sigma_t:=\Sigma_t^{1/2}$ the unique symmetric  positive definite square root. As $L_l(\mathbb Q)<\infty$, then $\sigma$ is a.s.\ invertible, and so $dX_t= \sigma_t dB_t $ for the Brownian motion $B_t:=\int_0^t \sigma_s^{-1}dX_s $. \\

\emph{Step 1:} For $\epsilon>0$ define $Z_t=X_t+\epsilon B_t$, so $dZ_t = (\sigma_t+\epsilon\mathcal I)dB_t$. By Lemma \ref{lem:Karandakar}:
\begin{align*}  
L_l(\mathcal L(Z)) = &\textstyle \mathbb E_{\mathbb Q} \left [ \int_0^1 F_l(  (\sigma_t+\epsilon\mathcal I)^2)dt  \right ] \\ \leq & \textstyle \frac{1}{2} \mathbb E_{\mathbb Q} \left [ \int_0^1 \{tr( (\sigma_t+\epsilon\mathcal I)^2) -l - \log\det  (\sigma_t^2)  \}dt  \right ]   \to  L_l(\mathbb Q),
\end{align*}
    where in the inequality we used that $\log\det$ is increasing for the positive semidefinite matrix ordering. Sending $\epsilon\to 0$ we see that the law of $Z$ can be made arbitrarily close to $\mathbb Q$. In conclusion, we may from now on assume that wlog.\ $\Sigma_t\geq \bar\epsilon \mathcal I$ under $\mathbb Q$, in the sense of positive semidefinite matrices.\\

    \emph{Step 2:} For $n\in\mathbb N$, let $\Sigma^{(n)}(t,\omega)$ be the matrix with eigenspaces equal to those of $\Sigma(t,\omega)$ and whose eigenvalues are of the form $\alpha\wedge n$ whenever $\alpha$ is a corresponding eigenvalue of $\Sigma(t,\omega)$. We denote by $\sigma^{(n)}(t,\omega)$ the unique symmetric positive semidefinite square root of $\Sigma^{(n)}(t,\omega)$. We define $Z^{(n)}$ via $Z^{(n)}_0:=X_0$ and $dZ^{(n)}_t:= \sigma^{(n)}(t,X)dB_t$. Thus by Lemma \ref{lem:Karandakar}:
 \begin{align*}  
L_l(\mathcal L(Z^{(n)})) = &\textstyle  \mathbb E_{\mathbb Q} \left [ \int_0^1 F_t(\Sigma^{n}(t,X))  dt\right ]
\to  L_l(\mathbb Q),
\end{align*}
    where the limit is due to monotone convergence. Hence from now on we may further assume that $\Sigma_t\leq (\bar\epsilon)^{-1} \mathcal I$ under $\mathbb Q$, in the sense of positive semidefinite matrices.\\

     \emph{Step 3:} Define $\sigma^{(n)}_t=n\int^t_{(t-1/n)\vee 0}\sigma_s dt$, so $\sigma^{n}\to\sigma$ in the $\lambda\times \mathbb Q$-sense. Remark that the process $(\sigma^{(n)}_t)_t$ is adapted and has continuous paths.
      Thanks to Steps 1-2 we may apply dominated convergence so that
     \[\textstyle \mathbb E_{\mathbb Q}\left[\int_0^1 F_l((\sigma^{(n)}_t)^2)dt\right]\to \mathbb E_{\mathbb Q}\left[\int_0^1 F_l(\sigma_t^2)dt\right].\]
     Similarly, if $Z^{(n)}_t:=X_0+\int_0^t \sigma^{(n)}_s dB_s$, then the BDG inequality shows that
     \[\textstyle \mathbb E_{\mathbb Q}\left[\sup_{t\in[0,1]}|Z^{(n)}_t - X_t|^2 \right]\to 0. \]
     Hence, we may further assume from now on that $(\sigma_t)_t$ has continuous paths.\\

    \emph{Step 4:} We extend $\sigma_t$ to $t<0$ by setting  it to be equal to the identity matrix. This induces a discontinuity at $t=0$ which plays no role in the coming arguments. Define
    $Z^{(n)}_t:= X_0 + \int_0^t\sigma_{s-1/n}dB_s$ and call $\sigma^{(n)}_t:= \sigma_{t-1/n}$, so $\sigma^{(n)}_t$ is $\mathcal F_{(t-1/n)\vee 0}$-measurable. By continuity of paths $\sigma^{(n)}_t\to\sigma_t$ for each $t>0$. By dominated convergence, as in Step 3, we conclude  
    \[\textstyle \mathbb E_{\mathbb Q}\left[\int_0^1 F_l((\sigma^{(n)}_t)^2)dt\right]\to \mathbb E_{\mathbb Q}\left[\int_0^1 F_l(\sigma_t^2)dt\right],\]
    as well as 
      \[\textstyle \mathbb E_{\mathbb Q}\left[\sup_{t\in[0,1]}|Z^{(n)}_t - X_t|^2 \right]\to 0. \]
By definition $\mathbb Q_n:=\mathcal L(Z^{(n)})$ is in   $\mathcal M_l^2(n)$, so we conclude.
\end{proof}

\begin{rmk}
    The above proof works the same if we replace the functional $\mathbb Q\mapsto \mathbb E_{\mathbb Q}[f(X)]$ by the possibly non-linear one $\mathbb Q\mapsto G(\mathbb Q)$, if the latter is continuous. Notice also that at every step in the proof we approximated $\mathbb Q$ by a sequence of better behaved approximates, and the convergence of these to $\mathbb Q$ was not only in the weak convergence sense, but also included second moment convergence. Hence the functional $\mathbb Q\mapsto G(\mathbb Q)$ actually only needs to be continuous wrt.\ a 2-Wasserstein topology. We may pass to lower-semicontinuous $G$ by standard arguments.
\end{rmk}

We can finally provide the proof of Theorem \ref{thm:hull}:

\begin{proof}[Proof of Theorem \ref{thm:hull}]
For simplicity we write $h_l(\mathbb Q)$ instead of $h_l(\mathbb Q)|\mathbb{B}^l)$.    We define $h_l^*(f)= \sup_{\mathbb Q\in\mathcal M_l^2}\{\mathbb E_{\mathbb Q}[f(X)] - h_l(\mathbb Q)\}$ and $h_l^{**}(\mathbb Q)= \sup_{f\in C_b(C)}\{\mathbb E_{\mathbb Q}[f(X)] - h_l^*(f)\} $. We know that $h_l^{**}$ is the largest convex lower semicontinuous minorant of $h_l$. By Lemmas \ref{lem:many_steps} and \ref{lem:equ_case_delay} we have:
    \begin{align*}
    h_l^*(f)&\textstyle \geq \sup_{\substack{n\in\mathbb N , \mathbb Q\in\mathcal M_l^2(n)}}\left\{ \mathbb E_{\mathbb Q}[f(X)] - h_l(\mathbb Q) \right\} \\
    &=\textstyle  \sup_{\substack{n\in\mathbb N , \mathbb Q\in\mathcal M_l^2(n)}}\left\{ \mathbb E_{\mathbb Q}[f(X)] - L_l(\mathbb Q) \right\} \\
    &=\textstyle  \sup_{\mathbb Q\in\mathcal M_l^2}\left\{ \mathbb E_{\mathbb Q}[f(X)] - L_l(\mathbb Q)\right\} =: L_l^*(f).    
    \end{align*}
    It clearly follows that $h_l^{**}(\mathbb Q)\leq L_l^{**}(\mathbb Q)$. Since by Lemma \ref{lem:topo_prop} $L_l$ is convex, proper, and lower semicontinuos, we have $L_l(\mathbb Q)= L_l^{**}(\mathbb Q)$. By Theorem \ref{thm:gantert_ineq} we know that $L_l$ is a minorant of $h_l$. We conclude that $L_l= h_l^{**}$.
\end{proof}

\section{Tensorization and closed-form formulae}

We first establish some useful tensorization results wherein the specific relative entropy in multiple dimensions is compared to its one-dimensional counterpart. Next we proceed to compute the closed-form expression for the specific relative entropy between simple multivariate martingales as promised in the introduction.

\subsection{Tensorization properties}
We establish two tensorization (in)equalities, the second of which was already mentioned in the introduction:
\begin{lem} \label{lem3.1}
    Given continuous martingales  $M:=( M^1 ,\dots, M^l   ),\,N:=( N^1 ,\dots ,  N^l  )$  such that $\mathcal L(M^1)=\mathcal L(M^i)$, $\mathcal L(N^1)=\mathcal L(N^i)$, and all the $M^i$'s are independent of each other and similarly all the $N^i$'s are independent of each other, then
    \begin{equation*} h_l(\mathcal{L}(M)|\mathcal{L}(N))=l h_1(\mathcal{L}(M^1)|\mathcal{L}(N^1)).\end{equation*}
\end{lem}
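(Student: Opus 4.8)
\medskip

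The plan is to reduce the claim to the finite-dimensional relative entropies $H(\mathcal L(M)|\mathcal L(N))|_{\mathcal F^{2^n}}$ and to exploit that, under the stated independence, these factorize across the $l$ coordinates.

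First I would fix $n\in\mathbb N$ and apply Lemma \ref{Lemma2.1} to rewrite $H(\mathcal L(M)|\mathcal L(N))|_{\mathcal F^n}$ as the relative entropy between the laws of the increment vector $(X_0,\Delta_1^nX,\dots,\Delta_n^nX)$ under $\mathcal L(M)$ and under $\mathcal L(N)$, where $X$ denotes the $\mathbb R^l$-valued canonical process. Since the components $M^1,\dots,M^l$ are mutually independent, the law of $M$ on $C([0,1];\mathbb R^l)\cong C([0,1];\mathbb R)^l$ is the product $\bigotimes_{i=1}^l\mathcal L(M^i)$; as each increment $\Delta_k^nX^i$ is a function of the $i$-th coordinate path alone, it follows that $\mathcal L_{\mathcal L(M)}(X_0,\Delta_1^nX,\dots,\Delta_n^nX)=\bigotimes_{i=1}^l\mathcal L_{\mathcal L(M^i)}(X_0^i,\Delta_1^nX^i,\dots,\Delta_n^nX^i)$, and likewise with $N$ in place of $M$.

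Next I would invoke the additivity of relative entropy over product measures (cf.\ the appendix, e.g.\ Lemma \ref{lem1.3}): this yields $H(\mathcal L(M)|\mathcal L(N))|_{\mathcal F^n}=\sum_{i=1}^l H(\mathcal L(M^i)|\mathcal L(N^i))|_{\mathcal F^n}$, an identity valid with the usual conventions also when some summand is $+\infty$, since $\mathcal L(M)|_{\mathcal F^n}\ll\mathcal L(N)|_{\mathcal F^n}$ holds if and only if $\mathcal L(M^i)|_{\mathcal F^n}\ll\mathcal L(N^i)|_{\mathcal F^n}$ for every $i$. Using that $\mathcal L(M^i)=\mathcal L(M^1)$ and $\mathcal L(N^i)=\mathcal L(N^1)$ for all $i$, each summand equals $H(\mathcal L(M^1)|\mathcal L(N^1))|_{\mathcal F^n}$, so $H(\mathcal L(M)|\mathcal L(N))|_{\mathcal F^n}=l\,H(\mathcal L(M^1)|\mathcal L(N^1))|_{\mathcal F^n}$. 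Specializing to $n\mapsto 2^n$, multiplying by $2^{-n}$ and taking $\liminf_{n\to\infty}$, the constant $l$ factors out of the $\liminf$ and we obtain $h_l(\mathcal L(M)|\mathcal L(N))=l\,h_1(\mathcal L(M^1)|\mathcal L(N^1))$.

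There is essentially no analytic difficulty here; the points that need care are (i) the measure-theoretic justification that independence of the coordinate processes turns the joint law of the discretized increments into a genuine product measure, and (ii) the bookkeeping when some relative entropy along the way is infinite, so that the identities degenerate consistently to $+\infty=+\infty$ on both sides. Both are routine once the factorization in the previous step is in place.
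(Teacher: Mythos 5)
Your proof is correct and follows essentially the same route as the paper's: both reduce to the finite-dimensional entropies $H(\cdot|\cdot)|_{\mathcal F^{2^n}}$, factorize them as a sum over the $l$ coordinates using independence and the tensorization property of relative entropy (Lemma \ref{lem1.3}), use the equality of marginal laws to obtain the factor $l$, and then rescale and take the liminf. Your intermediate passage through increments via Lemma \ref{Lemma2.1} is harmless but unnecessary (the paper factorizes the discretized values directly), and your extra care about the $+\infty$ bookkeeping is a welcome, if routine, addition.
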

\begin{proof}
It is well-known, or it can be readily derived from Lemma \ref{lem1.3}, that if $Z^1,\dots,Z^l$ and $Y^1,\dots,Y^l$ are random vectors, with $Y^1,\dots,Y^l$ independent of each other and similarly $Z^1,\dots,Z^l$ independent of each other, then  
\[\textstyle H(\mathcal L(Z^1,\dots, Z^l) | \mathcal L(Y^1,\dots, Y^l)) = \sum_{i=1}^l H(\mathcal L (Z^i)|\mathcal L (Y^i)).\]
Taking $Z^i:= (M_{k2^{-n}}^i)_{k=0}^{2^n}$ and $Y^i:= (N_{k2^{-n}}^i)_{k=0}^{2^n}$, and observing that therefore $\mathcal L(Z^1)=\mathcal L(Z^i)$ and $\mathcal L(Y^1)=\mathcal L(Y^i)$, we conclude that
\begin{align*}
   & H(\mathcal L(M_{02^{-n}},\dots, M_{2^n2^{-n}}) | \mathcal L(N_{02^{-n}},\dots, N_{2^n2^{-n}}))\\=& lH(\mathcal L(M^1_{02^{-n}},\dots, M^1_{2^n2^{-n}}) | \mathcal L(N^1_{02^{-n}},\dots, N^1_{2^n2^{-n}}))  .
\end{align*}
dividing by $2^n$ and taking liminf, we conclude.
\end{proof}

\begin{lem}\label{lem:tens_lb}
    Given two multivariate continuous martingales $M= ( M^1, \dots ,M^l  )$ and $N=( N^1 , \dots , N^l )$, where  $N$ has  independent coordinates, then the specific  relative entropy between $M$ and $N$ is at least equal to the sum of the  specific relative entropy between the coordinate processes:
    \begin{equation*} \textstyle h(\mathcal{L}(M)|\mathcal{L}(N))\geq \sum _{i=1}^l h(\mathcal{L}(M^i)|\mathcal{L}(N^i)).\end{equation*}
\end{lem}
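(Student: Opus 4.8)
The plan is to reduce the statement to a tensorization inequality for relative entropy at each finite level $\mathcal F^{2^n}$, and then pass to the limit. Concretely, I would first record the finite-dimensional fact that if $Z=(Z^1,\dots,Z^l)$ is any random vector and $Y=(Y^1,\dots,Y^l)$ has independent coordinates, then
\[\textstyle H(\mathcal L(Z^1,\dots,Z^l)\,|\,\mathcal L(Y^1,\dots,Y^l))\geq \sum_{i=1}^l H(\mathcal L(Z^i)\,|\,\mathcal L(Y^i)).\]
This is the genuine content of the lemma and is where the independence of $N$ (equivalently $Y$) is essential; without it the inequality can fail. It can be derived from the additivity/chain rule for relative entropy (Lemma \ref{lem1.3}) together with the nonnegativity of relative entropy: writing $H(\mathcal L(Z)\,|\,\mathcal L(Y))$ via the chain rule along the coordinates of $Y$, each conditional term $H(\mathcal L(Z^i\,|\,Z^1,\dots,Z^{i-1})\,|\,\mathcal L(Y^i))$ dominates, after taking expectations and using convexity of relative entropy (Jensen), the unconditional $H(\mathcal L(Z^i)\,|\,\mathcal L(Y^i))$. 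Alternatively one can invoke the well-known superadditivity of relative entropy over a product reference measure.

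Next I would apply this with $Z^i:=(M^i_{k2^{-n}})_{k=0}^{2^n}$ and $Y^i:=(N^i_{k2^{-n}})_{k=0}^{2^n}$ for each fixed $n$. Since $N$ has independent coordinates, the vectors $Y^1,\dots,Y^l$ are independent, so the finite-dimensional inequality gives
\[\textstyle H(\mathcal L(M_{02^{-n}},\dots,M_{2^n2^{-n}})\,|\,\mathcal L(N_{02^{-n}},\dots,N_{2^n2^{-n}}))\geq \sum_{i=1}^l H(\mathcal L(M^i_{02^{-n}},\dots,M^i_{2^n2^{-n}})\,|\,\mathcal L(N^i_{02^{-n}},\dots,N^i_{2^n2^{-n}})),\]
i.e.\ $H(\mathcal L(M)\,|\,\mathcal L(N))|_{\mathcal F^{2^n}}\geq \sum_{i=1}^l H(\mathcal L(M^i)\,|\,\mathcal L(N^i))|_{\mathcal F^{2^n}}$, using the description of the restricted relative entropy from Lemma \ref{Lemma2.1}.

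Finally I would divide by $2^n$ and take $\liminf_{n\to\infty}$. Using superadditivity of $\liminf$ (i.e.\ $\liminf \sum \geq \sum \liminf$ for finitely many sequences), the right-hand side's $\liminf$ is at least $\sum_{i=1}^l h_1(\mathcal L(M^i)\,|\,\mathcal L(N^i))$, while the left-hand side's $\liminf$ is exactly $h_l(\mathcal L(M)\,|\,\mathcal L(N))$ by definition. This yields the claimed bound. I expect the main (only real) obstacle to be the finite-dimensional superadditivity step: one must be careful that it is the chain rule plus convexity of $H$ that does the work, and that the independence hypothesis is used precisely to make the reference measure a product, so that the conditioning on the $M^i$'s side can only increase the relative entropy. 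The passage to the limit and the identification of the restricted entropies are routine given the earlier lemmas.
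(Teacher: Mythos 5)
Your proposal is correct and follows essentially the same route as the paper's proof: the same finite-dimensional superadditivity fact established via the chain rule (Lemma \ref{lem1.3}), independence of the reference coordinates, and Jensen's inequality with the convexity of relative entropy, applied to the discretized paths $Z^i=(M^i_{k2^{-n}})_k$, $Y^i=(N^i_{k2^{-n}})_k$ before dividing by $2^n$ and taking $\liminf$. Your explicit remark on the superadditivity of $\liminf$ is a small but welcome clarification of a step the paper leaves implicit.
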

\begin{proof}
It easily follows from the following fact: if $Z^1,\dots,Z^l$ and $Y^1,\dots,Y^l$ are random vectors, with $Y^1,\dots,Y^l$ independent of each other, then
\[\textstyle H(\mathcal{L}(Z^1,\dots,Z^l) | \mathcal{L}(Y^1,\dots,Y^l)  ) \geq \sum_{i=1}^l H(\mathcal{L}(Z^i)|\mathcal{L}(Y^i)). \]
Indeed, we can take $Z^i:= (M_{k2^{-n}}^i)_{k=0}^{2^n}$ and $Y^i:= (N_{k2^{-n}}^i)_{k=0}^{2^n}$ and apply the definition of the specific relative entropy. For the convenience of the reader we provide the argument behind the aforementioned fact: By Lemma \ref{lem1.3}, we have 
\begin{align*}&\textstyle  H(\mathcal{L}(Z^1,\dots,Z^l) | \mathcal{L}(Y^1,\dots,Y^l)  )=  \sum_{k=1}^l \mathbb E[H(\mathcal{L}(Z^k|Z^1=z^1,\dots,Z^{k-1}=z^{k-1})|\\&|\mathcal{L}(Y^k|Y^1=z^1,\dots,Y^{k-1}=z^{k-1}))_{z^1=Z^1,\dots,z^{k-1}=Z^{k-1}}] \\
=& \textstyle \sum_{k=1}^l \mathbb E[H(\mathcal{L}(Z^k|Z^1=z^1,\dots,Z^{k-1}=z^{k-1})|\mathcal{L}(Y^k))_{z^1=Z^1,\dots,z^{k-1}=Z^{k-1}}] \\
\geq&\textstyle  \sum_{k=1}^l H(\mathbb E[\mathcal{L}(Z^k|Z^1,\dots,Z^{k-1}])|\mathcal{L}(Y^k)) \\
=& \textstyle \sum_{k=1}^l H(\mathcal{L}(Z^k)|\mathcal{L}(Y^k),
\end{align*}
where the inequality is due to Jensen and the fact that the relative entropy is convex in its arguments, and the last equality is due to the power property.
\end{proof}

We now proceed to obtain closed form formulae for the specific relative entropy.
{We first recall that  an $l$-dimensional martingale Black-Scholes model fulfills $dM_t^{\Gamma}:=diag(M_t^{\Gamma})\Gamma dB_t$, namely $dM_t^i:=M_t^i(\sum_{k=1}^l\Gamma_{ik} d B_t^k)$ for $i=1,\dots,l$.}

\subsection{{Specific Relative Entropy between a multivariate Black-Scholes model and a Brownian motion}}\label{subsub2}
 
{
We provide the proof of Lemma \ref{lem:Black-Scholes_vs_BM_intro} from the introduction, which we rephrase here for the reader's convenience.}

\begin{lem}\label{lem:Black-Scholes_vs_BM}
{The specific relative entropy between a $\R^l$-valued martingale Black-Scholes models with parameter $\Gamma\in \mathbb{R}^{l \times l}$ and Brownian motion, both starting wlog.\ at $\mathbf{1}$, is given by:
\begin{equation} \begin{split} \label{srE_GBM_BM_l} \textstyle 
h_l(\mathcal L(M^{\Gamma})|\mathbb B^l)=\frac{1}{2}\Big(\sum_{i=1}^l \big(e^{\sum_{k=1}^l \Gamma_{ik}^2}-1\big)-l+\frac{1}{2} \sum_{i=1}^l \sum _{k=1}^l \Gamma_{ik}^2 - \log \big(\det (\Gamma \Gamma^T) \big)\Big).
\end{split}
\end{equation}}

\end{lem}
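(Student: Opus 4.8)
The plan is to compute the specific relative entropy by combining the general Gantert-type machinery with the explicit structure of the Black--Scholes martingale. First I would identify the quadratic covariation of $M^{\Gamma}$ under its own law. Since $dM^i_t = M^i_t\sum_{k}\Gamma_{ik}dB^k_t$, we get $d\langle M^i,M^j\rangle_t = M^i_tM^j_t(\Gamma\Gamma^T)_{ij}\,dt$, so the density of the quadratic covariation is $\Sigma_t = \mathrm{diag}(M_t)\,\Gamma\Gamma^T\,\mathrm{diag}(M_t)$. This already shows $\mathcal L(M^{\Gamma})\in\mathcal M_l^{2,abs}$, so Theorem~\ref{thm2.3} gives the lower bound $h_l(\mathcal L(M^{\Gamma})|\mathbb B^l)\geq \mathbb E[\int_0^1 F_l(\Sigma_t)\,dt]$. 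The bulk of the work is to show the matching upper bound; I would argue that for this particular $\mathbb Q$ the chain of inequalities in the proof of Theorem~\ref{thm2.3} can be reversed, or more cleanly, approximate by the delayed martingales of Lemma~\ref{lem:equ_case_delay} (or invoke the one-dimensional equality result of \cite{BaUn22} coordinatewise after a suitable change of variables). The point is that $M^{\Gamma}$ is, up to a deterministic linear change of coordinates and an exponential transform, a Gaussian integral, for which Proposition~\ref{thm2.2} gives equality; the log-transform does not affect finite-dimensional relative entropies by Remark~\ref{ex1.4} / Lemma~\ref{lem1.2}.

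Concretely, I would exploit that $\log M^i_t = \log M^i_0 + \sum_k\Gamma_{ik}B^k_t - \frac12(\Gamma\Gamma^T)_{ii}t$, i.e.\ $\log M_t$ is an $\R^l$-valued Gaussian process with a deterministic drift and covariance $(\Gamma\Gamma^T)\,t$. Using Lemma~\ref{Lemma2.1}, $H(\mathbb Q|\mathbb B^l)|_{\mathcal F^{2^n}}$ equals the relative entropy between the laws of the increment vectors $(\Delta_k^{2^n}X)_k$; applying the transformation $T(x)=e^x$ coordinatewise (via Lemma~\ref{lem1.2}, as in Remark~\ref{ex1.4}) reduces this to a relative entropy between the increments of $\log M$ and the increments of a Brownian motion — but note $\log M$ does \emph{not} have independent increments of the Brownian type against which we compare, so one cannot simply invoke Proposition~\ref{thm2.2} verbatim. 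Instead I would compute $H(\mathbb Q|\mathbb B^l)|_{\mathcal F^{2^n}}$ directly from the explicit conditional Gaussian laws of $M^{\Gamma}_{(k+1)2^{-n}}$ given $\mathcal F_{k2^{-n}}$, which are Gaussian with mean $M_{k2^{-n}}$ and covariance $\mathrm{diag}(M_{k2^{-n}})\,(\exp\text{-type correction})$, use formula~\eqref{eq:prelim_ent_Gaussians}, and pass to the scaling limit $2^{-n}\sum_k(\cdots)\to\int_0^1(\cdots)dt$.

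In carrying out the limit, the three terms of $F_l$ behave as follows: the $-l$ term contributes $-l/2$; the $\log\det$ term is deterministic and contributes $-\tfrac12\log\det(\Gamma\Gamma^T)$ after noting $\det(2^n\Delta_k^{2^n}\langle\log M\rangle)$ telescopes to $\det(\Gamma\Gamma^T)$ (this is exactly the computation already done in Remark~\ref{ex_extra}, whose time-$t$ relative entropy formula $H(\mathcal L(M_t^\Gamma)|\mathcal L(B_t^l))$ I would reuse, taking the derivative/limit structure into account); and the $\mathrm{tr}(\Sigma_t)$ term, after taking $\mathbb E_{\mathbb Q}$, produces $\mathbb E_{\mathbb Q}[\mathrm{tr}(\mathrm{diag}(M_t)\Gamma\Gamma^T\mathrm{diag}(M_t))]=\sum_i (\Gamma\Gamma^T)_{ii}\,\mathbb E[(M^i_t)^2]$. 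Since $M^i$ is a martingale with $\mathbb E[(M^i_t)^2]=\mathbb E[(M^i_0)^2]e^{(\Gamma\Gamma^T)_{ii}t}=e^{(\Gamma\Gamma^T)_{ii}t}$ (starting at $\mathbf 1$), the $dt$-integral gives $\sum_i(e^{(\Gamma\Gamma^T)_{ii}}-1)$ since $\int_0^1 (\Gamma\Gamma^T)_{ii}e^{(\Gamma\Gamma^T)_{ii}t}dt = e^{(\Gamma\Gamma^T)_{ii}}-1$, and $(\Gamma\Gamma^T)_{ii}=\sum_k\Gamma_{ik}^2$. A separate piece of $\mathrm{tr}$ coming from the precise discrete conditional covariance (not just its limit) accounts for the $\tfrac14\sum_{i,k}\Gamma_{ik}^2 = \tfrac14 \mathrm{tr}(\Gamma\Gamma^T)$ term; I would track this carefully by expanding $H$ of the exact conditional Gaussians rather than replacing $\Sigma_t$ by its formula prematurely. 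Assembling, $h_l = \tfrac12\big(\sum_i(e^{\sum_k\Gamma_{ik}^2}-1) - l + \tfrac12\sum_{i,k}\Gamma_{ik}^2 - \log\det(\Gamma\Gamma^T)\big)$.

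The main obstacle I anticipate is the upper bound: Theorem~\ref{thm2.3} only delivers ``$\geq$'', and $\mathcal L(M^\Gamma)$ is not in any $\mathcal M_l^2(N)$, so Lemma~\ref{lem:equ_case_delay} does not apply directly; one must either (i) show the Jensen steps in the proof of Theorem~\ref{thm2.3} are asymptotically tight for this $\mathbb Q$ — because $G_{2^n}(t,X)\to\Sigma_t$ and the relevant conditional covariances concentrate as $n\to\infty$, so the gap $H(\mathbb Q|\mathbb Q^{2^n})|_{\mathcal F^{2^n}}$ is $o(2^n)$ — or (ii) transfer the known one-dimensional equality for regular diffusion coefficients to the present setting. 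I would favor route (i): estimate $2^{-n}H(\mathbb Q|\mathbb Q^{2^n})|_{\mathcal F^{2^n}}\to 0$ by bounding the relative entropy between the true conditional law of $\Delta_k^{2^n}M$ (a product of independent lognormals, conditionally) and the Gaussian with matching conditional covariance, using that over a time window of length $2^{-n}$ the lognormal is within $O(2^{-n})$ of Gaussian in the relevant sense; summing $2^n$ such terms, each $o(2^{-n})$ uniformly, gives the claim. Getting this smallness uniform in $k$, with the $M_{k2^{-n}}$-dependent scaling under control (using the moment bounds above), is the delicate part of the argument.
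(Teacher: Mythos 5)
Your strategy is genuinely different from the paper's, and as written it has a gap at its central step. The paper never goes through ``Gantert lower bound plus matching upper bound'': it computes $H(\mathcal L(M^{\Gamma})|\mathbb B^l)|_{\mathcal F^n}$ \emph{exactly} in closed form for every $n$, by chaining conditional relative entropies via Lemma \ref{lem1.3} and the Markov property and then evaluating each conditional term as the relative entropy between the explicit \emph{lognormal} law $\mathcal L(M_{m/n}\mid M_{(m-1)/n}=y)\sim lognormal(\mu_{1/n}+\log y,\Sigma_{1/n})$ and the Gaussian $\mathcal N(y,\tfrac1n\mathcal I)$, using the densities and lognormal moments directly (the first term being Remark \ref{ex_extra} at $t=1/n$); the limit of $\tfrac1nH(\cdot)$ is then read off from an exact formula, so no inequality ever needs to be reversed. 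Two concrete problems with your route. First, in your ``direct computation'' paragraph you describe the conditional laws of $M^{\Gamma}_{(k+1)2^{-n}}$ given $\mathcal F_{k2^{-n}}$ as Gaussian and propose to apply \eqref{eq:prelim_ent_Gaussians}; they are lognormal, and replacing them by Gaussians with matching conditional covariance computes $\int\log\tfrac{d\mathbb Q^n}{d\mathbb B}|_{\mathcal F^n}\,d\mathbb Q$ from the proof of Theorem \ref{thm2.3}, i.e.\ exactly the lower bound, not $H(\mathbb Q|\mathbb B)|_{\mathcal F^n}$ itself. Second, the step that would close your argument --- showing $2^{-n}H(\mathbb Q|\mathbb Q^{2^n})|_{\mathcal F^{2^n}}\to 0$, i.e.\ that the conditional lognormal--vs--matched-Gaussian relative entropies vanish suitably --- is precisely what you defer as ``the delicate part''; since that is the entire content of the upper bound, the proof is incomplete. (The uniformity in $k$ and in $M_{k2^{-n}}$ that worries you actually disappears, because relative entropy is invariant under the coordinatewise rescaling $x\mapsto x/y$ by Lemma \ref{lem1.2}, so each conditional gap equals one fixed $y$-independent quantity; but you would still have to estimate that quantity, which you do not.)

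A smaller but real inaccuracy: you attribute the $\tfrac14\sum_{i,k}\Gamma_{ik}^2$ term to ``the precise discrete conditional covariance''. In the paper's computation it comes from the It\^o drift of $\log M$: each conditional term contributes $-\sum_i\log y_i$ plus the mean shift $\mu_{1/n}$, whose $\mathbb Q$-expectation at step $m$ is $\tfrac{m}{2n}\sum_{i,k}\Gamma_{ik}^2$, and the Ces\`aro average of $\tfrac{m}{n}$ over $m=1,\dots,n$ produces the extra factor $\tfrac12$. Equivalently, in the Gantert functional it is the $-\log\det(\mathrm{diag}(M_t)^2)$ part of $-\log\det\Sigma_t$ integrated against $\mathbb E[\log M^i_t]=-\tfrac t2\sum_k\Gamma_{ik}^2$ (cf.\ Remark \ref{rem:eq_first_ex}). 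Tracking this correctly forces you to keep the lognormal form of the conditional law, which points back to the first issue above.
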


\begin{proof}
   {Let $B_t^l \sim \mathcal{N}_l(\mathbf{1}, t \mathcal{I})$ be the $\R^l$-valued Brownian motion with $B_0=\mathbf{1}$. Let $M_t^{\Gamma} \sim lognormal(\mu_t, \Sigma_{t})$ be the $\R^l$-valued martingale Black-Scholes model with $(\mu_t)_i:=-\frac{t}{2} \sum_{k=1}^l \Gamma_{ik}^2$ and $(\Sigma_{t})_{ij}:=t(\Gamma \Gamma^T)_{ij}=t \sum_{k=1}^l \Gamma_{ik}\Gamma{jk}$ for $i,j=1,\dots,l$.
    We are first interested in computing: 
    \begin{equation*} \begin{split} \textstyle 
H(\mathcal L(M^{\Gamma})|\mathbb B^l)|_{\mathcal{F}^n}=H\big(\mathcal{L}(M_0^{\Gamma},M_{\frac{1}{n}}^{\Gamma} , \dots,M_{\frac{n-1}{n}}^{\Gamma},M_1 ^{\Gamma})\,|\, \mathcal{L}(B_0^l, B_{\frac{1}{n}}^l, \dots , B_{\frac{n-1}{n}}^l,B_1^l ) \big),
\end{split}
\end{equation*}
which, expanding the coordinates and omitting the superscripts "$\Gamma, l$" from now on, becomes:
\begin{equation*} \begin{split} \textstyle 
=H\big( \mathcal{L}(M_0^1, \dots, M_0^l, M_{\frac{1}{n}}^1, \dots, M_{\frac{1}{n}}^l, \dots  )|\mathcal{L}(B_0^1, \dots, B_0^l, B_{\frac{1}{n}}^1, \dots, B_{\frac{1}{n}}^l, \dots  )\big).
\end{split}
\end{equation*}
Now, by Lemma \ref{lem1.3}, this becomes:
\begin{equation*} \begin{split} 
&=\textstyle H(\mathcal{L}(M_0^1, \dots, M_0^l)|\mathcal{L}(B_0^1, \dots, B_0^l)) +H(\mathcal{L}(M_{\frac{1}{n}}^1, \dots, M_{\frac{1}{n}}^l)|\mathcal{L}(B_{\frac{1}{n}}^1, \dots, B_{\frac{1}{n}}^l))+\\&\textstyle \int_{y_1, \dots, y_l} H\Big(\mathcal{L}\big(M_{\frac{2}{n}}^1, \dots, M_{\frac{2}{n}}^l|M_{\frac{1}{n}}^1=y_1, \dots, M_{\frac{1}{n}}^l=y_l\big) |\\&| \mathcal{L}\big(B_{\frac{2}{n}}^1, \dots, B_{\frac{2}{n}}^l|B_{\frac{1}{n}}^1=y_1, \dots, B_{\frac{1}{n}}^l=y_l\Big)
+\int _{y_1, \dots, y_l, z_1, \dots, z_l} H\Big(\mathcal{L}\big(M_{\frac{3}{n}}^1, \dots, M_{\frac{3}{n}}^l|\\&\textstyle | M_{\frac{1}{n}}^1=y_1, \dots, M_{\frac{1}{n}}^l=y_l, M_{\frac{2}{n}}^1=y_1, \dots, M_{\frac{2}{n}}^l=y_l\big)|\mathcal{L}\big(B_{\frac{3}{n}}^1, \dots, B_{\frac{3}{n}}^l|\\&\textstyle | B_{\frac{1}{n}}^1=y_1, \dots, B_{\frac{1}{n}}^l=y_l, B_{\frac{2}{n}}^1=y_1, \dots, B_{\frac{2}{n}}^l=y_l\big) \Big).
\end{split}
\end{equation*}
By the Markov property of geometric Brownian motions, in the conditioning we can just consider the $M_t^1, \dots, M_t^l$ with the greatest time $t$. Denoting  with $M_t$ and $B_t$ the coordinates $M_t^1, \dots, M_t^l$ and $B_t^1, \dots, B_t^l$ respectively, we want to compute: 
\begin{equation*}\begin{split} \textstyle 
H(\mathcal{L}(M_{\frac{1}{n}})|\mathcal{L}(B_{\frac{1}{n}}))+\sum_{m=2}^{n} \int_y H(\mathcal{L}(M_{\frac{m}{n}}|M_{\frac{m-1}{n}}=y)|\mathcal{L}(B_{\frac{m}{n}}|B_{\frac{m-1}{n}}=y))d\mathcal{L}(M_{\frac{m-1}{n}})(y).
\end{split} \end{equation*}
The first term is given by Remark \ref{ex_extra} with $t=\frac{1}{n}$:
\begin{align}  \label{ex_extra_1/n}
& \textstyle H(\mathcal{L}(M_{1/n})|\mathcal{L}(B_{1/n})) \\= &\textstyle \frac{1}{2}\Big( -\log \big( \det (\Gamma \Gamma^T)\big) + \frac{1}{n} \sum_{i=1}^l \sum_{k=1}^l \Gamma_{ik}^2 -l +n  \sum_{i=1}^l \big(e^{\frac{1}{n}\sum
_{k=1}^l\Gamma_{ik}^2}-1 \big) \Big). \notag
\end{align}
For the rest, we use that $\mathcal{L}(B_{\frac{m}{n}}|B_{\frac{m-1}{n}}=y)\sim \mathcal{N}(y, \frac{1}{n}\mathcal{I})$ and  $\mathcal{L}(M_{\frac{m}{n}}|M_{\frac{m-1}{n}}=y) \sim lognormal(\mu_{\frac{1}{n}}+\log y, \Sigma_{\frac{1}{n}})$ where $y \in \R^l$, $\mu_{\frac{1}{n}}=-\frac{1}{2n}\sum _{k=1}^l\Gamma^2_{ik}$ and $\Sigma_{\frac{1}{n}}=\frac{1}{n}(\Gamma\Gamma^T)$. For simplicity, we denote the conditional distributions  of $\mathcal{L}(B_{\frac{m}{n}}|B_{\frac{m-1}{n}}=y)$ and  $\mathcal{L}(M_{\frac{m}{n}}|M_{\frac{m-1}{n}}=y)$ with $g(x)$ and $n(x)$, respectively. We can now proceed to examining the logarithms of the latter:
\begin{equation*} \begin{split} 
 &\textstyle \log(g(x)) =\log \Big(\frac{1}{\sqrt{(2\pi)^l \det \Sigma_{\frac{1}{n}}}} \times \\&\textstyle \big(\prod_{i=1}^l\frac{1}{x^i}\big) \exp \big(-\frac{1}{2}(\log x - \mu_{\frac{1}{n}}-\log y)^T\Sigma^{-1}_{\frac{1}{n}}(\log x - \mu_{\frac{1}{n}}-\log y) \big)\Big)\\&\textstyle =-\frac{1}{2}\log (2 \pi)^l \det \Sigma_{\frac{1}{n}} - \sum_{i=1}^l x^i -\frac{1}{2}(\log x - \mu_{\frac{1}{n}}-\log y)^T\Sigma^{-1}_{\frac{1}{n}}(\log x - \mu_{\frac{1}{n}}-\log y),
\end{split}
\end{equation*}
and the last term becomes:
\begin{equation*} \begin{split} 
&\textstyle -\frac{1}{2} tr \Big( \Sigma^{-1}_{\frac{1}{n}}\, (\log x - \mu_{\frac{1}{n}}-\log y)(\log x - \mu_{\frac{1}{n}}-\log y)^T\Big)\\&\textstyle =-\frac{1}{2} tr \Big( \Sigma^{-1}_{\frac{1}{n}}\,\big( \log x \log x^T + \mu_{\frac{1}{n}}\mu_{\frac{1}{n}}^T + \log y \log y^T -2\log x \mu_{\frac{1}{n}}^T -2 \log x \log y ^T \\&\textstyle +2 \mu_{\frac{1}{n}} \log y^T\big)\Big).
\end{split}
\end{equation*}
On the other hand:
\begin{equation*} \begin{split} 
\log(n(x))&=\textstyle \log \Big(\frac{1}{\sqrt{(2 \pi )^l}\det (\frac{1}{n}\mathcal{I})}\exp \big(-\frac{1}{2}(x-y)^T (\frac{1}{n}\mathcal{I})^{-1}(x-y)\big)\Big)\\&\textstyle =-\frac{1}{2}\log (2 \pi )^l  \det (\frac{1}{n}\mathcal{I})-\frac{1}{2}(x-y)^T (\frac{1}{n}\mathcal{I})^{-1}(x-y),
\end{split}
\end{equation*}
and the last term becomes:
\begin{equation*} \begin{split} \textstyle 
-\frac{1}{2}tr\Big((\frac{1}{n}\mathcal{I})^{-1}(x-y)(x-y)^T\Big)=-\frac{1}{2}tr\Big(n \mathcal{I} \big(xx^T-2xy^T+yy^T\big)\Big).
\end{split}
\end{equation*}
Before computing their expectations we look at the one of $M_{\frac{m}{n}}|M_{\frac{m-1}{n}}=y$:
\begin{equation*} \begin{split} \textstyle 
\big(\E[M_{\frac{m}{n}}|M_{\frac{m-1}{n}}=y]\big)_i&=\textstyle e^{\mu_i+\log y +\frac{1}{2}(\Sigma_{\frac{1}{n}})_{ii}}
\\&\textstyle =e^{-\frac{1}{2n}\sum _{k=1}^l \Gamma_{ik}^2 +\log y_i + \frac{1}{2n}\sum _{k=1}^l \Gamma_{ik}^2 }= y_i,
\end{split}
\end{equation*}
as $i=1, \dots, j$, and the variance $Var[M_{\frac{m}{n}}|M_{\frac{m-1}{n}}=y]$ is:
\begin{equation*} \begin{split} 
&=\textstyle  \exp \Big((\mu_{\frac{1}{n}})_i+ \log y_i+\mu_j(\frac{1}{n})+ \log y_j + \frac{1}{2}((\Sigma_{\frac{1}{n}})_{ii}+(\Sigma_{\frac{1}{n}})_{jj})\Big)(e^{(\Sigma_{\frac{1}{n}})_{ij}}-1)\\&\textstyle =\exp \Big( -\frac{1}{2n}\sum _{k=1}^l \Gamma_{ik}^2 + \log y_i -\frac{1}{2n}\sum _{k=1}^l\Gamma_{jk}^2 + \log y_j + \frac{1}{2n}(\sum _{k=1}^l \Gamma_{ik}^2+\sum _{k=1}^l \Gamma_{jk}^2)\Big)\\&\textstyle (e^{\frac{1}{n}\sum _{k=1}^l \Gamma_{ik}\Gamma_{jk}}-1) =y_i y_j(e^{\frac{1}{n}\sum _{k=1}^l \Gamma_{ik}\Gamma_{jk}}-1),
\end{split}
\end{equation*}
as $i,j=1, \dots, l$ while the second moment is:
\begin{equation*} \begin{split} 
&\textstyle \big( \E \big[(M_{\frac{m}{n}}|M_{\frac{m-1}{n}}=y)(M_{\frac{m}{n}}|M_{\frac{m-1}{n}}=y)^T\big]\big)_{ij}\\=& \textstyle \big(\E[M_{\frac{m}{n}}|M_{\frac{m-1}{n}}=y]\, \E[M_{\frac{m}{n}}|M_{\frac{m-1}{n}}=y]^T\big)_{ij} + \big(Var[M_{\frac{m}{n}}|M_{\frac{m-1}{n}}=y]\big)_{ij}\\=&\textstyle (yy^T)_{ij}+ y_i y_j(e^{\frac{1}{n}\sum _{k=1}^l \Gamma_{ik}\Gamma_{jk}}-1)\\=&\textstyle y_i y_j e^{\frac{1}{n}\sum _{k=1}^l \Gamma_{ik}\Gamma_{jk}}.
\end{split}
\end{equation*}
Moreover, we recall that $\log (M_{\frac{m}{n}}|M_{\frac{m-1}{n}}) \sim \mathcal{N}_l (\mu_{1/n}, \Sigma_{1/n})$ and $tr(xy^T)=tr(yx^T) $ $ \forall x, y \in \R^{l\times l}$. Now we can compute the expectations of the conditioned densities: 
\begin{equation*} \begin{split} 
&\textstyle \E_{M_{\frac{m}{n}}|M_{\frac{m-1}{n}}}[\log g(x)]=-\frac{1}{2}\log (2 \pi)^l \det \Sigma_{\frac{1}{n}} - \sum_{i=1}^l( (\mu_{\frac{1}{n}})_i+\log y_i)+\\&\textstyle 
 -\frac{1}{2}tr\Big( \Sigma_{\frac{1}{n}}^{-1}\big(\Sigma_{\frac{1}{n}} + (\mu_{1/n}+\log y)(\mu_{1/n}+\log y)^T +\mu_{1/n}\mu_{1/n}^T+\log y \log y^T\\& \textstyle -2 (\mu_{1/n}+\log y)\mu_{1/n}^T-2(\mu_{1/n}+\log y) \log y^T+ 2 \mu_{1/n}\log y^T \big)\Big)=\\&\textstyle =-\frac{1}{2}\log (2 \pi)^l \det \Sigma_{\frac{1}{n}} - \sum_{i=1}^l ( (\mu_{\frac{1}{n}})_i+\log y_i) -\frac{1}{2} tr \Big( \Sigma_{\frac{1}{n}}^{-1}\big( \Sigma_{1/n}+ \mu_{1/n}\mu_{1/n}^T +\\&\textstyle +\log y \log y^T + 2 \log y\mu_{1/n}^T +\mu_{1/n}\mu_{1/n}^T+ \log y \log y^T -2 \mu_{1/n}\mu_{1/n}^T-2 \log y\mu_{1/n}^T\\&\textstyle  -2\mu_{1/n} \log y^T -2\log y \log y^T +2 \mu_{1/n} \log y^T\big)\Big)=\\ &\textstyle  =-\frac{1}{2}\log (2 \pi)^l \det \Sigma_{\frac{1}{n}} - \sum_{i=1}^l ( (\mu_{\frac{1}{n}})_i+\log y_i) -\frac{1}{2} tr (\mathcal{I}). 
\end{split}
\end{equation*}
Recalling that $(\Sigma_{\frac{1}{n}})_{ij}=\frac{1}{n}(\Gamma \Gamma^T)$ and $(\mu_{\frac{1}{n}})_i=-\frac{1}{2n} \sum_{k=1}^l \Gamma_{ik}^2$, we have: 
\begin{equation*} \begin{split} 
&\textstyle E_{M_{\frac{m}{n}}|M_{\frac{m-1}{n}}}[\log g(x)]=\\&\textstyle = -\frac{l}{2}\log \frac{2 \pi }{n}- \frac{1}{2} \log (\det (\Gamma \Gamma^T)) +\frac{1}{2n}\sum_{i=1}^l \sum_{k=1}^l \Gamma_{ik}^2 - \sum_{i=1}^l \log y_i - \frac{l}{2}\\&\textstyle =-\frac{l}{2}\log \frac{2 \pi }{n} - \frac{1}{2} \log (\det (\Gamma \Gamma^T)) +\frac{1}{2n}\sum_{i=1}^l \sum_{k=1}^l \Gamma_{ik}^2 - \sum_{i=1}^l \log y_i - \frac{l}{2}.
\end{split}
\end{equation*} 
While on the other hand:
\begin{equation*} \begin{split} \textstyle 
E_{M_{\frac{m}{n}}|M_{\frac{m-1}{n}}}[\log n(x)]&\textstyle =-\frac{l}{2}\log \frac{2 \pi }{n}-\frac{1}{2}tr \Big(n \mathcal{I}\big(yy^T e^{\Sigma_{1/n}} -2 y y^T+ yy^T\big)\Big)\\&\textstyle =-\frac{l}{2}\log \frac{2 \pi }{n} -\frac{1}{2}tr \big (n \mathcal{I} yy^T e^{\Sigma_{1/n}} \big)+\frac{1}{2}y^Tn \mathcal{I}y\\&\textstyle =-\frac{l}{2}\log \frac{2 \pi }{n} -\frac{n}{2} \sum_{i=1}^l (y_i)^2 e^{\Sigma_{ii}}+ \frac{n}{2} \sum_{i=1}^l (y_i)^2 \\& \textstyle =-\frac{l}{2}\log \frac{2 \pi }{n} -\frac{n }{2}\sum_{i=1}^l (y_i)^2 (e^{\frac{1}{n}\sum_{k=1}^l\Gamma_{ik}^2}-1).
\end{split}
\end{equation*}
Thus, the relative entropy is:
\begin{equation*} \begin{split} 
&\textstyle H(\mathcal{L}(M_{\frac{m}{n}}|M_{\frac{m-1}{n}}=y)|\mathcal{L}(B_{\frac{m}{n}}|B_{\frac{m-1}{n}}=y))\\&\textstyle =\mathbb{E}_{\mathcal{L}(M_{\frac{m}{n}}|M_{\frac{m-1}{n}}=y)}\big[\log (g(x))-\log(n(x))\big]\\
&\textstyle =-\frac{l}{2}\log \frac{2 \pi }{n} - \frac{1}{2} \log (\det (\Gamma \Gamma^T)) +\frac{1}{2n}\sum_{i=1}^l \sum_{k=1}^l \Gamma_{ik}^2 - \sum_{i=1}^l \log y_i - \frac{l}{2}+\\&\textstyle +\frac{l}{2}\log \frac{2 \pi }{n} +\frac{n }{2}\sum_{i=1}^l (y_i)^2 (e^{\frac{1}{n}\sum_{k=1}^l\Gamma_{ik}^2}-1)\\&\textstyle =- \frac{1}{2} \log (\det (\Gamma \Gamma^T)) +\frac{1}{2n}\sum_{i=1}^l \sum_{k=1}^l \Gamma_{ik}^2 - \sum_{i=1}^l \log y_i - \frac{l}{2}+\\& \textstyle +\frac{n }{2}\sum_{i=1}^l (y_i)^2 (e^{\frac{1}{n}\sum_{k=1}^l\Gamma_{ik}^2}-1).
\end{split}
\end{equation*}
Now we want to integrate this quantity:
\begin{equation*} \begin{split} 
&\textstyle \int_y H(\mathcal{L}(M_{\frac{m}{n}}|M_{\frac{m-1}{n}}=y)|\mathcal{L}(B_{\frac{m}{n}}|B_{\frac{m-1}{n}}=y))\mathcal{L}(M_{\frac{m-1}{n}})dy=\\&\textstyle =\int_y - \frac{1}{2} \log (\det (\Gamma \Gamma^T))+\frac{1}{2n}\sum_{i=1}^l \sum_{k=1}^l \Gamma_{ik}^2 - \sum_{i=1}^l \log y_i - \frac{l}{2}+\\&\textstyle +\frac{n }{2}\sum_{i=1}^l (y_i)^2 (e^{\frac{1}{n}\sum_{k=1}^l\Gamma_{ik}^2}-1) \mathcal{L}(M_{\frac{k-1}{n}})dy=\\
&\textstyle =- \frac{1}{2} \log (\det (\Gamma \Gamma^T)) +\frac{1}{2n}\sum_{i=1}^l \sum_{k=1}^l \Gamma_{ik}^2 - \frac{l}{2}+\\&\textstyle  - \sum_{i=1}^l  \big(\E[\log M_{\frac{m-1}{n}}]\big)_i+\frac{n }{2}\sum_{i=1}^l \big(\E[(M_{\frac{m-1}{n}})^2]\big)_i\big(e^{\frac{1}{n}\sum_{k=1}^l\Gamma_{ik}^2}-1\big).\end{split}
\end{equation*}
Recalling that $\big(\E[\log M_{\frac{m-1}{n}}]\big)_i=\mu_i(\frac{m-1}{n})=-\frac{m-1}{2n}\sum_{k=1}^l \Gamma_{ik}^2$ and $\big(\E[M_{\frac{m-1}{n}}M_{\frac{m-1}{n}}^T]\big)_{ij}$ $=e^{\Sigma_{ij}(\frac{m-1}{n})}=e^{\frac{m-1}{n} \sum_{k=1}^l \Gamma_{ik}\Gamma_{jk}}$, we have: 
\begin{equation} \begin{split} \label{eq_int_srE}
&\textstyle \int_y H(\mathcal{L}(M_{\frac{m}{n}}|M_{\frac{m-1}{n}}=y)|\mathcal{L}(B_{\frac{m}{n}}|B_{\frac{m-1}{n}}=y))\mathcal{L}(M_{\frac{m-1}{n}})dy\\
=&\textstyle - \frac{1}{2} \log (\det (\Gamma \Gamma^T))+\frac{m}{2n}\sum_{i=1}^l \sum_{k=1}^l \Gamma_{ik}^2 - \frac{l}{2}+\frac{n }{2}\sum_{i=1}^l e^{\frac{m-1}{n} \sum_{k=1}^l \Gamma_{ik}^2}\big(e^{\frac{1}{n}\sum_{k=1}^l\Gamma_{ik}^2}-1\big).
\end{split}
\end{equation}
Summing up we have that: 
\begin{equation*}\begin{split} 
&\textstyle H(M| B)|_{F^n}= H(\mathcal{L}(M_{\frac{1}{n}})|\mathcal{L}(B_{\frac{1}{n}}))+\\&\textstyle +\sum_{m=2}^{n} \int H(\mathcal{L}(M_{\frac{m}{n}}|M_{\frac{m-1}{n}}=y)|\mathcal{L}(B_{\frac{m}{n}}|B_{\frac{m-1}{n}}=y))\mathcal{L}(M_{\frac{m-1}{n}})dy.
\end{split} \end{equation*}
Thus by Equations \eqref{ex_extra_1/n} and \eqref{eq_int_srE}, 
\begin{equation*} \begin{split} 
=&\textstyle \frac{1}{2}\Big( - \log (\det (\Gamma \Gamma^T)) + \frac{1}{n} \sum_{i=1}^l \sum_{k=1}^l \Gamma_{ik}^2 -l +n  \sum_{i=1}^l \big(e^{\frac{1}{n}\sum
_{k=1}^l\Gamma_{ik}^2}-1 \big) \Big) +\\&\textstyle + \sum _{m=2}^n \Big(- \frac{1}{2} \log (\det (\Gamma \Gamma^T))+\frac{m}{2n}\sum_{i=1}^l \sum_{k=1}^l \Gamma_{ik}^2 - \frac{l}{2}+\\&\textstyle +\frac{n }{2}\sum_{i=1}^l e^{\frac{m-1}{n} \sum_{k=1}^l \Gamma_{ik}^2}\big(e^{\frac{1}{n}\sum_{k=1}^l\Gamma_{ik}^2}-1\big) \Big).
\end{split}
\end{equation*}
For the specific relative entropy we look at the asymptotic behaviour of each term multiplied by $\frac{1}{n}$. Regarding the first term we have:
\begin{equation*} \begin{split} 
&\textstyle \frac{1}{2n}\Big( - \log (\det (\Gamma \Gamma^T)) + \frac{1}{n} \sum_{i=1}^l \sum_{k=1}^l \Gamma_{ik}^2 -l +n  \sum_{i=1}^l \big(e^{\frac{1}{n}\sum
_{k=1}^l\Gamma_{ik}^2}-1 \big) \Big) \xlongrightarrow {n \rightarrow \infty }0,
\end{split}
\end{equation*}
while for the other terms:
\begin{equation*} \begin{split} 
&\textstyle \frac{1}{n}\sum _{m=2}^n \Big(- \frac{1}{2} \log (\det (\Gamma \Gamma^T)) +\frac{m}{2n}\sum_{i=1}^l \sum_{k=1}^l \Gamma_{ik}^2 - \frac{l}{2}+\\&\textstyle +\frac{n }{2}\sum_{i=1}^l e^{\frac{m-1}{n} \sum_{k=1}^l \Gamma_{ik}^2}\big(e^{\frac{1}{n}\sum_{k=1}^l\Gamma_{ik}^2}-1\big) \Big)=\\&\textstyle =
-\frac{n-1}{n} \frac{1}{2} \log (\det (\Gamma \Gamma^T))  + \frac{n(n+1)-1}{4n^2}\sum_{i=1}^l \sum_{k=1}^l \Gamma_{ik}^2 -\frac{n-1}{n}\frac{l}{2} \\&\textstyle + \frac{n}{n}\frac{1}{2}(e^{\frac{1}{n}\sum_{k=1}^l\Gamma_{ik}^2}-1\big)  \big ( \frac{e^{\sum_{k=1}^l\Gamma_{ik}^2}-e^{\frac{1}{n}\sum_{k=1}^l\Gamma_{ik}^2}}{e^{\frac{1}{n}\sum_{k=1}^l\Gamma_{ik}^2}-1}\big)\\&\textstyle 
\xlongrightarrow {n \rightarrow \infty } - \frac{1}{2} \log (\det (\Gamma \Gamma^T))  +\frac{1}{4} \sum_{i=1}^l \sum_{k=1}^l \Gamma_{ik}^2- \frac{l}{2}+ \sum_{i=1}^l \frac{e ^{\sum_{k=1}^l\Gamma_{ik}^2}-1}{2}.
\end{split}
\end{equation*}
In conclusion we get the specific relative entropy: 
\begin{equation*} \begin{split} 
 h_l(\mathcal L(M^{\Gamma})|\mathbb B^l)&\textstyle =\lim_{n \rightarrow \infty }\frac{1}{n}H(\mathcal L(M^{\Gamma})|\mathbb B^l)|_{\mathcal F^n}\\ &\textstyle =0+ \big( - \frac{1}{2} \log (\det (\Gamma \Gamma^T))  +\frac{1}{4} \sum_{i=1}^l \sum_{k=1}^l \Gamma_{ik}^2- \frac{l}{2}+ \frac{e-1}{2} \big)\\&\textstyle =- \frac{1}{2} \log (\det (\Gamma \Gamma^T))  +\frac{1}{4} \sum_{i=1}^l \sum_{k=1}^l \Gamma_{ik}^2- \frac{l}{2}+\sum_{i=1}^l \frac{e ^{\sum_{k=1}^l\Gamma_{ik}^2} -1}{2}.
\end{split}
\end{equation*}}
\end{proof}

\begin{rmk}\label{rem:eq_first_ex}
{We say that Gantert's inequality is an equality for $M$ if   
    \begin{equation*} \begin{split} 
h(\mathcal L(M)|\mathbb B^l)&\textstyle =\frac{1}{2} \mathbb{E} \Big[ \int _0^1 \big[tr( \Sigma_t \Sigma_t^T) -l -\log (\det ( \Sigma_t \Sigma_t^T  ))\big]dt\Big], 
\end{split}
\end{equation*}
whenever $dM_t=\Sigma_t dB_t$ and $M_0=B_0$.
In the present case, since $dM_t^{\Gamma}:=diag(M_t^{\Gamma})\Gamma dB_t$, we have that $\ \Sigma_t=diag(M_t^{\Gamma})\Gamma$. Specifically, omitting the superscript $\Gamma$, we have that  \[\textstyle  \Sigma_t \Sigma_t^T =diag (M_t) (\Gamma \Gamma ^T)diag (M_t).\] Therefore $tr(\Sigma_t \Sigma_t^T )= \sum_{i=1}^l (M_t^i)^2 \sum _{k=1}^l \Gamma_{ik}^2$ and \[\textstyle \det ( \Sigma_t \Sigma_t^T ) = (\det (diag(M_t)))^2 \det (\Gamma \Gamma^T)= \prod _{i=1}^l (M_t^i)^2 \det (\Gamma \Gamma^T).\]
Therefore, by Fubini, we would expect to find: 
\begin{equation*} \begin{split} \textstyle 
h_l(M^{\Gamma}|B)&\textstyle =\frac{1}{2} \Big(\int^1_0 \sum_{i=1}^l \E[(M_t^i)^2] \sum _{k=1}^l \Gamma_{ik}^2-l - \sum_{i=1}^l \E [(\log M_t^i)^2]- \log \det (\Gamma \Gamma^T)\Big)dt
\end{split}
\end{equation*}
Recalling that $\log M_t \sim \mathcal{N}_l(\mu_t, \sigma^2_t)$ where $(\mu_t)_i=-\frac{t}{2}\sum _{k=1}^l \Gamma_{ik}^2$ and $(\sigma^2_{t})_{ij}=t(\Gamma \Gamma^T)$, we have that $\forall x \in \R^l$: 
\begin{equation*} \begin{split} 
&\E[M_tM_t^T]_{ij}=(\E[(M_t)]\E[(M_t)]^T)_{ij}+ Var[M_t]_{ij}=e^{t\sum _{k=1}^l \Gamma_{ik}\Gamma_{jk}}
\end{split}
\end{equation*}
Therefore, we would expect
\begin{equation*} \begin{split} 
h_l(M^{\Gamma}|B)&=\textstyle \frac{1}{2}\int^1_0\Big(  \sum_{i=1}^l e^{\sum_{k=1}^l \Gamma_{ik}^2} \sum _{k=1}^l \Gamma_{ik}^2-l +t\sum_{i=1}^l \sum _{k=1}^l \Gamma_{ik}^2 - \log \det (\Gamma \Gamma^T) \Big)dt\\&=\textstyle \frac{1}{2}\Big(\sum_{i=1}^l e^{t\sum_{k=1}^l \Gamma_ik^2} -l t +  \frac{t^2}{2} \sum_{i=1}^l \sum _{k=1}^l \Gamma_{ik}^2 -t   \log \det (\Gamma \Gamma^T)\Big|_0^1\\&=\textstyle \frac{1}{2}\Big(\sum_{i=1}^l \big(e^{\sum_{k=1}^l \Gamma_ik^2}-1\big)-l+\frac{1}{2} \sum_{i=1}^l \sum _{k=1}^l \Gamma_{ik}^2 - \log \det (\Gamma \Gamma^T) \Big),
\end{split}
\end{equation*}
which is exactly what  Lemma \ref{lem:Black-Scholes_vs_BM} states. Hence Gantert's inequality is an equality in the present case.}
\end{rmk}

\subsection{Specific Relative Entropy between two multivariate Black-Scholes models}\label{subsub4}
We proceed to the proof of Lemma \ref{lem:Black-Scholes} from the introduction, which we rephrase here for the reader's convenience:
\begin{lem}\label{lem3.5}
We have {
\begin{equation}\label{equazioneB}\textstyle 
h_l(\mathcal L(M^{\Gamma_1})|\mathcal L(M^{\Gamma_2}))=\frac{1}{2}\Big(tr\big((\Gamma_2 \Gamma_2^T)^{-1}(\Gamma_1 \Gamma_1^T)\big)-l - \log \frac{\det (\Gamma_1 \Gamma_1^T)}{\det (\Gamma_2 \Gamma_2^T)}\Big) .
\end {equation}}
\end{lem}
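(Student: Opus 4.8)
The plan is to reduce the whole computation to a sum of relative entropies between Gaussians with \emph{independent increments}, in the spirit of the proof of Lemma \ref{lem:Black-Scholes_vs_BM} but now with two lognormal models in play rather than one model against Wiener measure. First I would pass to logarithms: by It\^o's formula the coordinate-wise logarithm $L^{\Gamma}_t:=\log M^{\Gamma}_t$ of a martingale Black--Scholes model satisfies $dL^{\Gamma,i}_t=\sum_{k}\Gamma_{ik}dB^k_t-\tfrac12\sum_k\Gamma_{ik}^2\,dt$, so $L^{\Gamma}$ is an $\mathbb R^l$-valued Brownian motion with \emph{constant} drift vector $\mu^{\Gamma}:=-\tfrac12\,\mathrm{diag}(\Gamma\Gamma^T)$ (the vector of diagonal entries of $\Gamma\Gamma^T$) and covariance rate $\Gamma\Gamma^T$; in particular $L^{\Gamma}$ has independent increments and $L^{\Gamma}_0=0$ since $M^{\Gamma}_0=\mathbf 1$ (which we may assume without loss of generality, and we assume $\Gamma_1,\Gamma_2$ invertible so that $\Gamma_i\Gamma_i^T$ is positive definite, the relative entropy being $+\infty$ otherwise). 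Applying Lemma \ref{lem1.2} with the coordinate-wise bijection $T(x)=e^x$ to the finite-dimensional vectors $(M_0,M_{1/n},\dots,M_1)$ shows that the relative entropy on $\mathcal F^n$ is unchanged by this transformation:
\[\textstyle H(\mathcal L(M^{\Gamma_1})|\mathcal L(M^{\Gamma_2}))|_{\mathcal F^n}=H(\mathcal L(L^{\Gamma_1})|\mathcal L(L^{\Gamma_2}))|_{\mathcal F^n}.\]

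Next, since $L^{\Gamma_1}$ and $L^{\Gamma_2}$ both have independent increments and both start at $0$ deterministically, the independent-increments clause of Lemma \ref{Lemma2.1} yields
\[\textstyle H(\mathcal L(L^{\Gamma_1})|\mathcal L(L^{\Gamma_2}))|_{\mathcal F^n}= n\, H\!\left(\mathcal N_l\big(\mu^{(1)}_n,\tfrac1n\Gamma_1\Gamma_1^T\big)\,\big|\,\mathcal N_l\big(\mu^{(2)}_n,\tfrac1n\Gamma_2\Gamma_2^T\big)\right),\]
with $\mu^{(i)}_n:=-\tfrac1{2n}\,\mathrm{diag}(\Gamma_i\Gamma_i^T)$, because all $n$ summands are equal. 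I would then invoke the closed-form relative entropy between two (in general non-centred) $l$-dimensional Gaussians, $H(\mathcal N_l(\mu_1,\Sigma_1)|\mathcal N_l(\mu_2,\Sigma_2))=\tfrac12\big(\mathrm{tr}(\Sigma_2^{-1}\Sigma_1)-l-\log\tfrac{\det\Sigma_1}{\det\Sigma_2}+(\mu_1-\mu_2)^T\Sigma_2^{-1}(\mu_1-\mu_2)\big)$, which extends \eqref{eq:prelim_ent_Gaussians} to unequal means. Here the factor $\tfrac1n$ cancels both in $\mathrm{tr}\big((\tfrac1n\Gamma_2\Gamma_2^T)^{-1}(\tfrac1n\Gamma_1\Gamma_1^T)\big)=\mathrm{tr}\big((\Gamma_2\Gamma_2^T)^{-1}(\Gamma_1\Gamma_1^T)\big)$ and in the ratio of determinants, so the first three terms combine into the $n$-independent constant
\[\textstyle F_l^\ast:=\tfrac12\Big(\mathrm{tr}\big((\Gamma_2\Gamma_2^T)^{-1}(\Gamma_1\Gamma_1^T)\big)-l-\log\tfrac{\det(\Gamma_1\Gamma_1^T)}{\det(\Gamma_2\Gamma_2^T)}\Big),\]
while the mean-mismatch term is $\tfrac12(\mu^{(1)}_n-\mu^{(2)}_n)^T\,n(\Gamma_2\Gamma_2^T)^{-1}(\mu^{(1)}_n-\mu^{(2)}_n)=\tfrac{c}{n}$ for a constant $c\geq 0$ depending only on $\Gamma_1,\Gamma_2$. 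Hence $H(\mathcal L(M^{\Gamma_1})|\mathcal L(M^{\Gamma_2}))|_{\mathcal F^n}=nF_l^\ast+c$.

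Finally, dividing by $n$ and letting $n\to\infty$ gives $h_l(\mathcal L(M^{\Gamma_1})|\mathcal L(M^{\Gamma_2}))=F_l^\ast$, which is exactly \eqref{equazioneB}; since the limit along all integers $n$ exists, the $\liminf$ along $n=2^n$ in the definition of $h_l$ coincides with it.

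I expect essentially no obstacle beyond bookkeeping: the one genuinely new feature relative to the martingale-versus-Brownian-motion case of Lemma \ref{lem:Black-Scholes_vs_BM} is that the two log-price processes carry \emph{different} drifts, so the Gaussian relative entropy acquires the mean-mismatch term $(\mu^{(1)}_n-\mu^{(2)}_n)^T\Sigma_2^{-1}(\mu^{(1)}_n-\mu^{(2)}_n)$; the point to check is that this term is $O(1/n)$ and therefore vanishes after normalisation, exactly as the analogous remainder terms did in the previous lemma. One should also record that the argument uses the unequal-means Gaussian entropy formula rather than \eqref{eq:prelim_ent_Gaussians}. If one prefers to stay closer to the computational style of Lemma \ref{lem:Black-Scholes_vs_BM}, the same conclusion follows from the Markov property via the chain rule of Lemma \ref{lem1.3} together with Remark \ref{ex1.4}, upon noting that conditioning both models on their common value $y$ at time $(m-1)/n$ shifts both underlying Gaussian means by $\log y$, so this shift cancels in the relative entropy and the per-step contribution is again $F_l^\ast+O(1/n)$, uniformly in $m$ and $y$.
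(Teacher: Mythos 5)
Your proposal is correct and takes essentially the same route as the paper's proof: both reduce $H(\cdot|\cdot)|_{\mathcal F^n}$ to $n$ identical relative entropies between $l$-dimensional Gaussians with unequal means (Example \ref{ex1.1}), observe that the trace and log-determinant terms are scale-invariant in $n$ while the mean-mismatch term is $O(1/n)$ per increment, and conclude after dividing by $n$. The only difference is organizational: you perform the logarithmic change of variables globally at the outset and then invoke the independent-increments additivity of Lemma \ref{Lemma2.1} directly, whereas the paper arrives at the same i.i.d.\ Gaussian structure via the chain rule of Lemma \ref{lem1.3}, the Markov property, and a ratio-by-ratio application of Lemma \ref{lem1.2}; this is a mild streamlining rather than a different argument.
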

\begin{proof}
We have:
\begin{equation*} \begin{split} \textstyle 
H   (\mathcal L(M^{\Gamma_1})|\mathcal L(M^{\Gamma_2}))|_{\mathcal{F}^n} = H\Big( \mathcal{L} \big( M_{1/n}^{\Gamma_1}\!,\, M_{\frac{2}{n}}^{\Gamma_1}\!,\, \cdots\!,\, M_{1}^{\Gamma_1} \big) | \mathcal{L} \big( M_{1/n}^{\Gamma_2}\!,\, M_{\frac{2}{n}}^{\Gamma_2}\!,\, \cdots\!,\, M_{1}^{\Gamma_2} \big) \Big),
\end{split}\end{equation*}
and the r.h.s.\ is given explicitly by
\begin{equation*} \begin{split}\textstyle 
&H\Big( \mathcal{L} \big( M^{1 \, \Gamma_1}_{\frac{1}{n}}, \dots, M^{l \, \Gamma_1}_{1/n}, M^{1 \, \Gamma_1}_{\frac{2}{n}}, \dots, M^{l \, \Gamma_1}_{\frac{2}{n}}, \dots \big) | \mathcal{L}   \big( M^{1 \, \Gamma_2}_{\frac{1}{n}}, \dots, M^{l \, \Gamma_2}_{1/n}, M^{1 \, \Gamma_2}_{\frac{2}{n}}, \dots, M^{l \, \Gamma_2}_{\frac{2}{n}},\\& \dots \big) \Big).
\end{split}\end{equation*}
Applying Lemma \ref{lem1.2}, this is equal to
\begin{equation*} \begin{split}
&\textstyle =H\Bigg( \mathcal{L} \Big( M^{1, \, \Gamma_1}_{\frac{1}{n}}, \dots, M^{l, \, \Gamma_1}_{\frac{1}{n}}, \frac{M^{1, \, \Gamma_1}_{2/n}}{M^{1 \, \Gamma_1}_{1/n}}, \dots, \frac{M^{2, \, \Gamma_1}_{\frac{2}{n}}}{M^{2 \, \Gamma_1}_{\frac{1}{n}}} ,\dots \Big) | \mathcal{L} \Big( M^{1, \, \Gamma_2}_{\frac{1}{n}}, \dots, M^{l, \, \Gamma_2}_{\frac{1}{n}}, \\&\textstyle ,\frac{M^{1, \, \Gamma_2}_{2/n}}{M^{1, \, \Gamma_2}_{1/n}},\dots, \frac{M^{2,\, \Gamma_2}_{\frac{2}{n}}}{M^{2, \, \Gamma_2}_{\frac{1}{n}}} ,\dots \Big) \Bigg),
\end{split}\end{equation*}
and by Lemma \ref{lem1.3}, it follows
\begin{equation*} \begin{split}
&\textstyle H(\mathcal L(M^{\Gamma_1})|\mathcal L(M^{\Gamma_2}))|_{\mathcal{F}^n} = H\Big( \mathcal{L} ( M^{1, \, \Gamma_1}_{\frac{1}{n}}, \ldots, M^{l, \, \Gamma_1}_{\frac{1}{n}} ) \Big| \mathcal{L} \Big( M^{1, \, \Gamma_2}_{\frac{1}{n}}, \ldots, M^{l, \, \Gamma_2}_{\frac{1}{n}} \Big) +\\
&\textstyle +\int_{y_1, \ldots, y_l} H\Bigg( \mathcal{L} \Big( \frac{M^{1, \, \Gamma_1}_{\frac{2}{n}}}{M^{1 \, \Gamma_1}_{\frac{1}{n}}}, \ldots, \frac{M^{l, \, \Gamma_1}_{\frac{2}{n}}}{M^{l, \, \Gamma_1}_{\frac{1}{n}}} \Big| M^{1, \, \Gamma_1}_{\frac{1}{n}}=y_1, \ldots, M^{l, \, \Gamma_1}_{\frac{1}{n}}=y_l \Big) \Big| \\& \textstyle \mathcal{L} \Big( \frac{M^{1, \, \Gamma_2}_{\frac{2}{n}}}{M^{1, \, \Gamma_2}_{\frac{1}{n}}}, \ldots, \frac{M^{l, \, \Gamma_2}_{\frac{2}{n}}}{M^{l, \, \Gamma_2}_{\frac{1}{n}}} \Big| M^{1, \, \Gamma_2}_{\frac{1}{n}}=y_1, \ldots, M^{l, \, \Gamma_2}_{\frac{1}{n}}=y_l \Big) \Bigg) d\mathcal{L}(M^{1, \, \Gamma_1}_{\frac{1}{n}}, \ldots, M^{l, \, \Gamma_1}_{\frac{1}{n}})\\&(y_1, \ldots, y_l) +\int_{y_1, \ldots, y_l} H\Bigg( \mathcal{L}\Big( \frac{M^{1, \, \Gamma_1}_{\frac{3}{n}}}{M^{1, \, \Gamma_1}_{\frac{2}{n}}}, \ldots, \frac{M^{l, \, \Gamma_1}_{\frac{3}{n}}}{M^{l, \, \Gamma_1}_{\frac{2}{n}}} \Big| M^{1, \, \Gamma_1}_{\frac{1}{n}}=y_1, \ldots, M^{l, \, \Gamma_1}_{\frac{1}{n}}=y_l, \\&\textstyle M^{1, \, \Gamma_1}_{\frac{2}{n}}=z_1, \ldots, M^{l, \, \Gamma_1}_{\frac{2}{n}}=z_l \Big)   \Big |\mathcal{L}\Big( \frac{M^{1, \, \Gamma_2}_{\frac{3}{n}}}{M^{1, \, \Gamma_2}_{\frac{2}{n}}}, \ldots, \frac{M^{l, \, \Gamma_2}_{\frac{3}{n}}}{M^{l, \, \Gamma_2}_{\frac{2}{n}}} \Big| M^{1, \, \Gamma_2}_{\frac{1}{n}}=y_1, \ldots, M^{l, \, \Gamma_2}_{\frac{1}{n}}=y_l, \\&M^{1, \, \Gamma_2}_{\frac{2}{n}}=z_1, \ldots, M^{l, \, \Gamma_2}_{\frac{2}{n}}=z_l \Big) \Bigg)d\mathcal{L}(M^{1, \, \Gamma_1}_{\frac{1}{n}}, \ldots, M^{l \, \Gamma_1}_{\frac{1}{n}})(y_1, \ldots, y_l) d \mathcal{L}\Big( M^{1, \, \Gamma_1}_{\frac{2}{n}}, \ldots, \\&\textstyle M^{l, \, \Gamma_1}_{\frac{2}{n}} \Big)(z_1, \ldots, z_l) + \ldots
\end{split}\end{equation*}
Note that, by the Markov property and the independence of the increments of the Brownian motion, the ratios are all independents and so we can ignore the conditioning. Therefore we want to compute the first term and for $m=2, \dots, n$:
\begin{equation}\label{equazi11} \begin{split}\textstyle 
H\Bigg( \mathcal{L}\Big( \frac{M^{1, \, \Gamma_1}_{\frac{m}{n}}}{M^{1, \, \Gamma_1}_{\frac{m-1}{n}}}, \ldots, \frac{M^{l, \, \Gamma_1}_{\frac{m}{n}}}{M^{l, \, \Gamma_1}_{\frac{m-1}{n}}} \Big) \Big| \mathcal{L} \Big(\frac{M^{1, \, \Gamma_2}_{\frac{m}{n}}}{M^{1, \, \Gamma_2}_{\frac{m-1}{n}}}, \ldots, \frac{M^{l, \, \Gamma_2}_{\frac{m}{n}}}{M^{l, \, \Gamma_2}_{\frac{m-1}{n}}} \Big) \Bigg).
\end{split}\end{equation}
Now, dropping the subscript "$1,2$" for a moment, we look at the distributions of each coordinate $i=1, \dots l$:
\begin{equation*} \begin{split} \textstyle 
\frac{M^i_{\frac{m}{n}}}{M^i_{\frac{m-1}{n}}}&\textstyle =e^{\sum_{k=1}^l\Gamma_{ik}\Delta B_{1/n}^k-\frac{1}{2n}\sum _{k=1}\Gamma _{ik}^2}\\
&\textstyle \sim lognormal(-\frac{1}{2n}\sum_{k=1}^{l}\Gamma_{ik}^2,\frac{1}{n}\sum_{k=1}^l\Gamma _{ik}^2).
\end{split}\end{equation*}
So equation \eqref{equazi11} becomes:
\begin{equation*} \begin{split}
=& \textstyle H\Bigg( \mathcal{L} \Big( e^{\sum_{k=1}^l\Gamma_{1, \,1k}B_{1/n}^k-\frac{1}{2n}\sum_{k=1}^l\Gamma_{1, \,1k}^2}, \ldots, e^{\sum_{k=1}^l\Gamma_{1, \,lk}B_{1/n}^k-\frac{1}{2n}\sum_{k=1}^l\Gamma_{1, \,lk}^2} \Big) \Big| \\&\textstyle \mathcal{L} \Big( e^{\sum_{k=1}^l\Gamma_{2, \,1k}B_{1/n}^k-\frac{1}{2n}\sum_{k=1}^l\Gamma_{2, \,1k}^2}, \ldots, e^{\sum_{k=1}^l\Gamma_{2, \,lk}B_{1/n}^k-\frac{1}{2n}\sum_{k=1}^l\Gamma_{2, \,lk}^2} \Big) \Bigg).
\end{split}\end{equation*}
and again by Lemma \ref{lem1.2}:
\begin{equation}\label{equaz12} \begin{split}
&=\textstyle  H\Bigg( \mathcal{L} \Big( \sum_{k=1}^l\Gamma_{1, \,1k}B_{1/n}^k-\frac{1}{2n}\sum_{k=1}^l\Gamma_{1, \,1k}^2, \ldots, \sum_{k=1}^l\Gamma_{1, \,lk}B_{1/n}^k-\frac{1}{2n}\sum_{k=1}^l\Gamma_{1, \,lk}^2 \Big) \Big| \\
&\textstyle \mathcal{L} \Big( \sum_{k=1}^l\Gamma_{2, \,1k}B_{1/n}^k-\frac{1}{2n}\sum_{k=1}^l\Gamma_{2, \,1k}^2, \ldots, \sum_{k=1}^l\Gamma_{2, \,lk}B_{1/n}^k-\frac{1}{2n}\sum_{k=1}^l\Gamma_{2, \,lk}^2 \Big) \Bigg).
\end{split}\end{equation}
Now in any coordinate there is a normal distributions so we want to compute the relative entropy between two multivariate Gaussians: $H\big(\mathcal{N}_l(\mu_1,\Sigma_1)|\mathcal{N}_l(\mu_2,\Sigma_2)\big)$ where $\mu_{1,2}$ and $\Sigma_{1,2}$ are respectively the mean vector and the covariance matrix given by the coordinates of \eqref{equaz12}: $(\Sigma_{1,2})_{ij}=\frac{1}{n}\Sigma_{k=1}^l(\Gamma_{1,2})_{ik}(\Gamma_{1,2})_{ jk}$  and  $\mu_{1,2}= -\frac{1}{2n} \big(\sum _{k=1}^l(\Gamma_{1,2,})_{ 1k}^2, \dots, \sum _{k=1}^l(\Gamma_{1,2})_{ lk}^2\big)^T$. To show how to get the covariance matrix, we first define $Z_{i}:= \sum_{k=1}^l\Gamma_{i}B_{1/n}^k-\frac{1}{2n}\sum _{k=1}\Gamma_{i}^2$ and $\bar{Z}_{i}:= \sum_{k=1}^l\Gamma_{i}B_{1/n}^k$. Ignoring  the subscripts for simplicity we have: \begin{equation*} \begin{split} (\Sigma)_{ij}&=Cov[(Z_i,Z_j)]=\mathbb{E}\big[ (Z_i-\mathbb{E}[Z_i])(Z_j-\mathbb{E}[Z_j])\big] \\ 
&=\textstyle  \mathbb{E}\big[ \bar{Z_i} \bar{Z_j}\big]= \mathbb{E}\big[ \sum_{k=1}^l\Gamma_{ik}B_{1/n}^k \sum_{k=1}^l\Gamma_{jk}B_{1/n}^k\big]\\
&=\textstyle  \sum_{k=1}^l\Gamma_{ik}\Gamma_{jk} \mathbb{E}\big[ (B_{1/n}^k)^2\big] = \frac{1}{n} \sum_{k=1}^l\Gamma_{ik}\Gamma_{jk}{=\frac{1}{n} \big(\Gamma \Gamma^T\big)_{ij}}.
\end{split}\end{equation*}
After noticing that we can argue in the same way for the first term since $M_{\frac{1}{n}}^{i, \, \Gamma_{1,2}} \sim lognormal(-\frac{1}{2n}\sum_{k=1}^{l}(\Gamma_{1,2})_{ik}^2,\frac{1}{n}\sum_{k=1}^l(\Gamma_{1,2})_{ik}^2) $ then by Lemma \ref{Lemma2.1} and using Example \ref{ex1.1} we have: 
\begin{equation*} \label{equaz13}  \begin{split} \textstyle 
    H\big( \mathcal L(M^{\Gamma_1})|\mathcal L(M^{\Gamma_2}) \big)|_{\mathcal{F}^n}&\textstyle =\sum _{m=1}^{n}H(\mathcal{N}_l(\mu_1,\Sigma_1)|\mathcal{N}_l(\mu_2,\Sigma_2))\\&\textstyle =nH(\mathcal{N}_l(\mu_1,\Sigma_1)|\mathcal{N}_l(\mu_2,\Sigma_2))
    \\&\textstyle = \frac{n}{2} (tr(\Sigma_2^{-1}\Sigma_1)+(\mu_2-\mu_1)^T\Sigma_2^{-1}(\mu_2-\mu_1)-l - \log \frac{\det \Sigma_1}{\det \Sigma_2}). \end{split}\end{equation*} 
    Looking at the asymptotic behaviour of the terms, we expect that:
    \begin{equation*}\begin{split} \textstyle 
    tr(\Sigma_2^{-1}\Sigma_1) \, \text{ and } \, \log ( \frac{\det \Sigma_2}{\det \Sigma_2}) \, \sim \frac{n}{n},
    \end{split}\end{equation*} 
    while for the quadratic term:
    \begin{equation*} \begin{split}\textstyle 
    (\mu_2-\mu_1)^T\Sigma_2^{-1}(\mu_2-\mu_1) \sim \frac{1}{n^2} \, n.
    \end{split}\end{equation*} 
    In this way, we get the specific relative entropy desired:
    \begin{equation*}\begin{split} h_{{l}}(\mathcal L(M^{\Gamma_1})|\mathcal L(M^{\Gamma_2}))&=\textstyle \lim _{n \rightarrow \infty} \frac{1}{n} H(\mathcal L(M^{\Gamma_1})|\mathcal L(M^{\Gamma_2}))|_{\mathcal{F}^n}\\&\textstyle {=\frac{1}{2}\Big(tr\big((\Gamma_2 \Gamma_2^T)^{-1}(\Gamma_1 \Gamma_1^T)\big)-l - \log \frac{\det (\Gamma_1 \Gamma_1^T)}{\det (\Gamma_2 \Gamma_2^T)}\Big)}.
    \end{split}\end{equation*} 
\end{proof}

\begin{rmk} \label{rmk12}
    In Lemma \ref{lem3.5} we have that $\Sigma_{u}=\frac{1}{n}\Gamma_{u}\Gamma_{u}^T$ with $u=1,2$. Indeed, $ (\frac{1}{n}\Gamma_{u}\Gamma_{u}^T)_{ij} =\frac{1}{n} \sum_{k=1}^l (\Gamma_u) _{ik} (\Gamma_u) _{kj}^T= \frac{1}{n}\sum_{k=1}^l(\Gamma_u) _{ik} (\Gamma_u) _{jk} = (\Sigma_{u})_{ij}$.
\end{rmk}

\begin{cor}
If $\Gamma_1=diag(\gamma_i)_{i=1,\dots,l}$ and $\Gamma_2=diag(\bar{\gamma_i})_{i=1,\dots,l}$  then  $$\textstyle h_{{l}}(M^{\Gamma_1}|M^{\Gamma_2})=\frac{1}{2}\big(\sum_{i=1}^l \frac{\gamma_i^2}{\bar{\gamma}_i^2}-l-\sum_{i=1}^l \log\frac{\gamma_i^2}{\bar{\gamma}_i^2} \big).$$
\end{cor}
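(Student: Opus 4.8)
The plan is to read this off directly from Lemma \ref{lem3.5}. First I would substitute $\Gamma_1=diag(\gamma_i)_{i=1,\dots,l}$ and $\Gamma_2=diag(\bar\gamma_i)_{i=1,\dots,l}$ into the closed-form formula of Lemma \ref{lem3.5}. Since a product of diagonal matrices is diagonal, one has $\Gamma_1\Gamma_1^T=diag(\gamma_i^2)$ and $\Gamma_2\Gamma_2^T=diag(\bar\gamma_i^2)$; here one tacitly assumes all $\gamma_i,\bar\gamma_i$ are nonzero, which is exactly what makes $\Gamma_1\Gamma_1^T$ and $\Gamma_2\Gamma_2^T$ symmetric positive definite, so that Lemma \ref{lem3.5} applies and the models start at the same point.

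Next I would evaluate the two ingredients appearing in \eqref{equazioneB}. Inverting a diagonal matrix amounts to inverting its entries, so $(\Gamma_2\Gamma_2^T)^{-1}=diag(1/\bar\gamma_i^2)$ and thus $(\Gamma_2\Gamma_2^T)^{-1}(\Gamma_1\Gamma_1^T)=diag(\gamma_i^2/\bar\gamma_i^2)$, whose trace is $\sum_{i=1}^l \gamma_i^2/\bar\gamma_i^2$. Likewise the determinant of a diagonal matrix is the product of its entries, so $\det(\Gamma_1\Gamma_1^T)/\det(\Gamma_2\Gamma_2^T)=\prod_{i=1}^l \gamma_i^2/\bar\gamma_i^2$, whence $\log\big(\det(\Gamma_1\Gamma_1^T)/\det(\Gamma_2\Gamma_2^T)\big)=\sum_{i=1}^l\log(\gamma_i^2/\bar\gamma_i^2)$. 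Substituting these into \eqref{equazioneB} yields
\[ h_l(M^{\Gamma_1}|M^{\Gamma_2})=\frac12\Big(\sum_{i=1}^l\frac{\gamma_i^2}{\bar\gamma_i^2}-l-\sum_{i=1}^l\log\frac{\gamma_i^2}{\bar\gamma_i^2}\Big), \]
which is the assertion.

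There is no genuine obstacle here: the computation goes through precisely because diagonal matrices decouple coordinatewise under trace, determinant and inversion. As a sanity check one may note that in the diagonal case the two Black--Scholes models have independent coordinates, each being a one-dimensional geometric Brownian motion with parameter $\gamma_i$ resp.\ $\bar\gamma_i$, so the same identity also follows from the additivity of relative entropy for independent coordinates (as used in the proof of Lemma \ref{lem3.1}) together with the one-dimensional case of Lemma \ref{lem3.5}, namely $h_1(M^{\gamma_i}|M^{\bar\gamma_i})=\frac12(\gamma_i^2/\bar\gamma_i^2-1-\log(\gamma_i^2/\bar\gamma_i^2))$.
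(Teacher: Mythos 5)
Your proposal is correct and matches the paper's approach exactly: the paper's proof of this corollary is literally ``Follows from the previous lemma,'' and your diagonal-matrix computation of the trace and determinant terms in \eqref{equazioneB} is precisely the intended substitution. The tensorization sanity check via Lemma \ref{lem3.1} is a nice consistency observation but not needed.
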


\begin{proof}
    Follows from the previous lemma. 
\end{proof}

\begin{cor}
    If $\Gamma_1$ and $\Gamma_2$ are triangular matrices then also in this case $$\textstyle h_{{l}}(M^{\Gamma_1}|M^{\Gamma_2})=\frac{1}{2}\big(\sum_{i=1}^l \frac{\gamma_i^2}{\bar{\gamma}_i^2}-l-\sum_{i=1}^l \log\frac{\gamma_i^2}{\bar{\gamma}_i^2} \big),$$ where $(\gamma_i)_{i=1,\dots,l}$ are the diagonal elements of $\Gamma_1$  and $(\bar{\gamma}_i)_{i=1,\dots,l}$ are the diagonal elements of $\Gamma_2$. 
    \end{cor}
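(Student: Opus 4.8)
The plan is to specialize Lemma \ref{lem3.5} (equivalently Lemma \ref{lem:Black-Scholes}) to triangular parameter matrices. Write $\Sigma_j:=\Gamma_j\Gamma_j^T$ for $j=1,2$; Lemma \ref{lem3.5} gives $h_l(\mathcal L(M^{\Gamma_1})|\mathcal L(M^{\Gamma_2}))=\tfrac12\big(tr(\Sigma_2^{-1}\Sigma_1)-l-\log\tfrac{\det\Sigma_1}{\det\Sigma_2}\big)$, so it suffices to evaluate the log-determinant and the trace under the triangularity hypothesis and to recognize the outcome as the right-hand side of the corollary (one may also note it equals $\sum_{i=1}^l F_1(\gamma_i^2/\bar\gamma_i^2)$ via Lemma \ref{F_l}).

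The determinant term is immediate: a triangular matrix has determinant equal to the product of its diagonal entries, so $\det\Sigma_j=(\det\Gamma_j)^2=\prod_{i=1}^l(\Gamma_j)_{ii}^2$, hence $\det\Sigma_1=\prod_i\gamma_i^2$ and $\det\Sigma_2=\prod_i\bar\gamma_i^2$, giving $\log\tfrac{\det\Sigma_1}{\det\Sigma_2}=\sum_{i=1}^l\log\tfrac{\gamma_i^2}{\bar\gamma_i^2}$; in particular all $\gamma_i,\bar\gamma_i$ are nonzero, so $\Sigma_j$ is invertible as required. For the trace I would use the factorization $\Sigma_2^{-1}=(\Gamma_2^{-1})^T\Gamma_2^{-1}$ together with cyclic invariance of the trace to rewrite $tr(\Sigma_2^{-1}\Sigma_1)=tr\big((\Gamma_2^{-1}\Gamma_1)^T(\Gamma_2^{-1}\Gamma_1)\big)=\|\Gamma_2^{-1}\Gamma_1\|_F^2$. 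Since the inverse of a triangular matrix is triangular of the same kind with reciprocal diagonal, and a product of triangular matrices of the same kind is again triangular with diagonal the entrywise product of the diagonals, $\Gamma_2^{-1}\Gamma_1$ is triangular with $i$-th diagonal entry $\gamma_i/\bar\gamma_i$, so that $\|\Gamma_2^{-1}\Gamma_1\|_F^2$ contains the contribution $\sum_{i}\gamma_i^2/\bar\gamma_i^2$ coming from the diagonal. Substituting both computations back into Lemma \ref{lem3.5} then yields the stated identity.

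The main obstacle is precisely this last trace step: one must show $\|\Gamma_2^{-1}\Gamma_1\|_F^2=\sum_i\gamma_i^2/\bar\gamma_i^2$, i.e.\ that the off-diagonal entries of $\Gamma_2^{-1}\Gamma_1$ do not contribute, and this is exactly where triangularity of \emph{both} $\Gamma_1$ and $\Gamma_2$ has to be exploited (the diagonal case treated in the previous corollary being the special instance in which $\Gamma_2^{-1}\Gamma_1$ is itself diagonal). Once the trace has been reduced to the diagonal sum, the remaining manipulations are routine algebra and the corollary follows; checking this reduction carefully — rather than any analytic input — is the only delicate point, all the martingale-theoretic content having already been packaged into Lemma \ref{lem3.5}.
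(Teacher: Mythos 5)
Your reduction to Lemma \ref{lem3.5}, the determinant computation, and the rewriting of the trace as $tr(\Sigma_2^{-1}\Sigma_1)=tr\big((\Gamma_2^{-1}\Gamma_1)^T(\Gamma_2^{-1}\Gamma_1)\big)=\|\Gamma_2^{-1}\Gamma_1\|_F^2$ are all correct, and you are right that $\Gamma_2^{-1}\Gamma_1$ is triangular with $i$-th diagonal entry $\gamma_i/\bar\gamma_i$. But the step you yourself flag as the only delicate point --- that the off-diagonal entries of $\Gamma_2^{-1}\Gamma_1$ do not contribute --- is merely asserted, not proved, and in fact it is false: the Frobenius norm squared is the sum of the squares of \emph{all} entries, and the above-diagonal entries of $\Gamma_2^{-1}\Gamma_1$ have no reason to vanish. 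Concretely, take $l=2$, $\Gamma_2=\mathcal I$ and $\Gamma_1=\left(\begin{smallmatrix}1&1\\0&1\end{smallmatrix}\right)$: then $\Gamma_2^{-1}\Gamma_1=\Gamma_1$ and $\|\Gamma_1\|_F^2=3$, while $\sum_i\gamma_i^2/\bar\gamma_i^2=2$. Feeding this into Lemma \ref{lem3.5} gives $h_l=\tfrac12(3-2-\log 1)=\tfrac12$, whereas the right-hand side of the corollary is $\tfrac12(2-2-0)=0$. So the identity you need cannot be established, and the corollary as stated fails for triangular matrices that are not diagonal (more precisely, whenever $\Gamma_2^{-1}\Gamma_1$ is not diagonal).

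You should know that the paper's own proof stumbles at exactly the same spot: it invokes the identity $tr(UL)=\sum_i U_{ii}L_{ii}$ for $U$ upper and $L$ lower triangular, but in the product $(UL)_{ii}=\sum_k U_{ik}L_{ki}$ both factors are supported on $k\ge i$, so all the terms with $k>i$ survive; the identity $\;(XY)_{ii}=X_{ii}Y_{ii}$ only holds when $X$ and $Y$ are triangular \emph{of the same type}, which is not the configuration arising here (one has $L=U^T$). The honest conclusion is that the trace term genuinely equals $\sum_{i\le j}\big((\Gamma_2^{-1}\Gamma_1)_{ij}\big)^2$, and the clean formula of the corollary is only recovered under the additional hypothesis that $\Gamma_2^{-1}\Gamma_1$ is diagonal --- the case already covered by the preceding corollary. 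Your instinct to isolate the trace reduction as the crux was exactly right; carrying out the check, rather than deferring it, would have revealed that it fails.
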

    \begin{proof}
        We can suppose that $\Gamma_1$ and $\Gamma_2$ are upper triangular matrices, although the proof reveals that the two matrices can be also both lower triangular or of a different type of triangularity.  {Define $\bar{\Sigma}_1:=\Gamma_1 \Gamma_1^T$ and $\bar{\Sigma}_2:=\Gamma_2\Gamma_2^T$. Thus, we have: 
 \begin{equation*} \begin{split}
    \det \bar{\Sigma}_1&\textstyle =\prod _{i=1}^l \gamma_i^2.
 \end{split}\end{equation*}}
Similarly
 $\det \bar{\Sigma}_2^{-1}= \frac{1}{\prod_{i=1}^l\bar{\gamma}_i^2}$.
  Therefore
{ \begin{equation*} \begin{split}\textstyle 
 \log \frac{\det \bar{\Sigma}_1}{\det \bar{\Sigma_2}}&\textstyle =\sum_{i=1}^l \log \frac{\gamma_i^2}{\bar{\gamma}_i^2}.
 \end{split}\end{equation*}}
To compute the trace of $\bar{\Sigma}_2^{-1}\bar{\Sigma}_1$ we need some more  work. Firstly,
{\begin{equation*} \begin{split}
 tr(\bar{\Sigma}_2^{-1}\bar{\Sigma}_1)&=tr\big((\Gamma_2\Gamma_2^T)^{-1}(\Gamma_1 \Gamma_1^T)\big)
 =tr\big((\Gamma_2^T)^{-1} (\Gamma_2)^{-1}\,\Gamma_1\Gamma_1^T\big),
 \end{split}\end{equation*}}
where $(\Gamma_2)^{-1}$ and $\Gamma_1$ are upper triangular matrices while 
 $(\Gamma_2^T)^{-1}$ and $\Gamma_1^T$ are lower triangular matrices. Moreover, by the cyclic property of the trace: $tr(\bar{\Sigma}_2^{-1}\bar{\Sigma}_1)= tr\big( (\Gamma_2)^{-1}\,\Gamma_1\Gamma_1^T\, (\Gamma_2^T)^{-1}\big)$ where  $(\Gamma_2)^{-1}\,\Gamma_1$ is an upper triangular matrix and $\Gamma_1^T\, (\Gamma_2^T)^{-1}$ is a lower triangular matrix.
 
 Now note that if $U,L$ are respectively upper and lower square $l$-matrix then:
 \begin{equation*} \begin{split}
 tr(UL)&\textstyle =\sum_{i=1}^l(UL)_{ii}
 =\sum_{i=1}^l (\sum_k^l U_{ik}L_{ki})
 =\sum _{i=1}^l U_{ii}L_{ii}.
 \end{split}\end{equation*}
 and if $X,Y$ are two triangular square $l$-matrix of the same type then $(XY)_{ii}=X_i Y_i$ for all $i=1,\dots, l$. 
 Therefore, in our case we have obtained the first term in \eqref{equazioneB}:
 \begin{equation*} \begin{split}
 {tr(\bar{\Sigma}_2^{-1}\bar{\Sigma}_1)}&=\textstyle \sum _{i=1}^l \big((\Gamma_2)^{-1} \Gamma_1\big)_{ii}\,  \big(\Gamma_1^T (\Gamma_2^T)^{-1}\big)_{ii} \\&\textstyle =\sum _{i=1}^l ((\Gamma_2)^{-1})_{ii} (\Gamma_1)_{ii} (\Gamma_1^T)_{ii} ((\Gamma_2^T)^{-1})_{ii}
 =\sum _{i=1}^l \frac{\gamma_i^2}{\bar{\gamma}_i^2},
 \end{split}\end{equation*}
 where $(\gamma_i)_{i=1,\dots,l}, (\bar{\gamma}_i)_{i=1,\dots,l}$ are respectively the diagonal elements of $\Gamma_1$ and $\Gamma_2$.  \end{proof}

\begin{rmk}\label{rem:eq_second_ex}
    Let $(\Sigma_t)_t, (\bar{\Sigma}_t)_t $ be $\R^{l\times l}$-valued adapted processes. Let $B_t \in \R^l$ multivariate Brownian motion and $M_t, \bar{M}_t$ such that $dM_t=\Sigma_tdB_t$ and $d\bar{M}_t=\bar \Sigma_t dB_t$. Then one can hope to approximate  $h(\mathcal L(M)|\mathcal L(\bar{M}))$ with
    \begin{equation*} \begin{split}\textstyle 
    \frac{1}{2} \mathbb{E} \Big[ \int _0^1 \big [tr((\bar{\Sigma}_t \bar{\Sigma}_t^T)^{-1} \Sigma_t \Sigma_t^T) -l -\log \det ((\bar{\Sigma}_t \bar{\Sigma}_t^T)^{-1} \Sigma_t \Sigma_t^T ) \big]dt\Big],
    \end{split}\end{equation*}
    and in fact we would hope that the latter is a lower bound of the former (if we extrapolate Gantert's inequality).
     In our case if we define $\Lambda_t:=diag(M_t)\Gamma_1 $ and $\bar{\Lambda}_t:=diag(\bar M_t)\Gamma_2$ both in $\mathbb{R}^{l \times l}$ then this becomes: \begin{equation} \label{equaz14}\begin{split}\textstyle  \mathbb{E} \Big[ \int _0^1 \frac{1}{2}\big(tr((\bar{\Lambda}_t\bar{\Lambda}_t^T)^{-1}\Lambda_t \Lambda_t^T ) -l -\log \det((\bar{\Lambda}_t\bar{\Lambda}_t^T)^{-1}\Lambda_t \Lambda_t^T) \big) dt \Big].\end{split}\end{equation}
    Since,  $(\Lambda_t)_{ij}=m_i(\Gamma_1)_{ij}$, where $m_i:=(M_t)_{ii}$ and $i,j=1,\dots, l$, we have:
    \begin{equation*} \begin{split}&\textstyle (\Lambda_t \Lambda_t^T)_{ij}=\sum _{k=1}^l \Lambda _{ik}\Lambda _{kj}^T = \sum _{k=1}^l \Lambda _{ik}\Lambda _{jk}=\sum _{k=1}^l m_i (\Gamma_1)_{ik} m_j (\Gamma_1)_{jk}=\\
    &\textstyle = m_i m_j \sum _{k=1}^l (\Gamma_1)_{ik}(\Gamma_1)_{jk}=  m_i m_j (\Gamma_1 \Gamma _1^T)_{ij}
    \end{split}\end{equation*}
    and similarly for $\bar{\Lambda}$ we get: $(\bar{\Lambda}_t\bar{\Lambda}_t^T)_{ij}^{-1}=\frac{1}{m_im_j}(\Gamma_2 \Gamma_2^T)_{ij}^{-1}$. 
    Thus, we have showed that the integrand in \eqref{equaz14} is equal to { $\frac{1}{2}\Big(tr\big((\Gamma_2 \Gamma_2^T)^{-1}(\Gamma_1 \Gamma_1^T)\big)-l - \log \frac{\det (\Gamma_1 \Gamma_1^T)}{\det (\Gamma_2 \Gamma_2^T)}\Big)$},  exactly as in the specific relative entropy computed in Lemma \ref{lem3.5}. In this sense we can say here, as in Remark \ref{rem:eq_first_ex}, that Gantert's inequality is an equality.
\end{rmk}

\bibliographystyle{abbrv}
\bibliography{joint_biblio(5)}

\appendix
\section{ Elementary Properties of the Entropy}
\label{app:A}

We collect here a few elementary properties of the relative entropy. We start by recalling the well-knwon expression for the relative entropy between Gaussians:

\begin{ex}\label{ex1.1}
Given two $l$-multivariate normal distributions $ \mathbb{P}_{i} \sim \mathcal{N}_l(\mu_{i}, \Sigma_{i})$, where $i=1,2$, $l \in \mathbb{N}$, $\mu _i \in \mathbb{R}^l, \Sigma_i \in \mathbb{R}^{l \times l}$, then the relative entropy between the two variables is:
\begin{equation} \label{H(N|N)}\textstyle 
    H\big(\mathbb{P}_1|\mathbb{P}_2)=\frac{1}{2}\big(tr(\Sigma_2^{-1}\Sigma_1)-l+(\mu_2-\mu_1)^T\Sigma_2^{-1}(\mu_2-\mu_1)-\log( \frac{\det \Sigma_1}{\det \Sigma_2})\big).
\end{equation}
\end{ex}

\begin{rmk}\label{rmk2}
    If the  two normal variables have the same mean $\mu \in \R^l$ then:
    \begin{equation*}\textstyle 
        H(\mathcal{N}_l(\mu, \Sigma_1)|\mathcal{N}_l(\mu, \Sigma_2))=\frac{1}{2}\Big(tr(\Sigma_2^{-1}\Sigma_1)-l-\log( \frac{\det \Sigma_1}{\det \Sigma_2})\Big).
    \end{equation*}
    \end{rmk}

The next well-known lemmas are also useful throughout our work:

    \begin{lem} \label{lem1.2}
     Let $(X,\mathcal{A})$ and $(Y,\mathcal{B})$ be measurable spaces, $\mu$ and $\nu$ probability measures on $(X,\mathcal{A})$ such that $\mu \ll \nu$ and $T:X \rightarrow Y$ such that $\mathcal{A}=\sigma(T):=\sigma(\{T^{-1}(B):B \in \mathcal{B}\})$. Then $T(\mu)\ll T(\nu)$ and for $\nu$-a.e. $x \in X$:
    \begin{equation*}\textstyle 
        \frac{dT(\mu)}{dT(\nu)}(T(x))=\frac{d\mu}{dv}(x) \mbox{ and }  H(\mu|\nu)=H(T(\mu)|T(\nu)).
    \end{equation*}
\end{lem}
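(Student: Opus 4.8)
The plan is to use the hypothesis $\mathcal A=\sigma(T)$ together with the Doob--Dynkin factorization lemma. First I would record that $T(\mu)\ll T(\nu)$: if $B\in\mathcal B$ satisfies $T(\nu)(B)=\nu(T^{-1}(B))=0$, then $\mu(T^{-1}(B))=0$ since $\mu\ll\nu$, hence $T(\mu)(B)=0$. In particular the density $\frac{dT(\mu)}{dT(\nu)}$ is well-defined on $(Y,\mathcal B)$.

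Next, fix a $\sigma(T)$-measurable version of $\frac{d\mu}{d\nu}$ (any version is $\mathcal A$-measurable, and $\mathcal A=\sigma(T)$). Since this function is $\sigma(T)$-measurable and $[0,\infty)$-valued, the factorization lemma supplies a $\mathcal B$-measurable $\phi\colon Y\to[0,\infty)$ with $\frac{d\mu}{d\nu}=\phi\circ T$ identically on $X$. I would then verify that $\phi$ is a version of $\frac{dT(\mu)}{dT(\nu)}$ by testing against arbitrary $B\in\mathcal B$ and using the image-measure change-of-variables formula:
\[T(\mu)(B)=\mu\bigl(T^{-1}(B)\bigr)=\int_{T^{-1}(B)}\frac{d\mu}{d\nu}\,d\nu=\int_{T^{-1}(B)}(\phi\circ T)\,d\nu=\int_B\phi\,d(T(\nu)).\]
As $B$ is arbitrary this gives $\phi=\frac{dT(\mu)}{dT(\nu)}$ $T(\nu)$-a.s., and composing with $T$ yields $\frac{dT(\mu)}{dT(\nu)}(T(x))=\phi(T(x))=\frac{d\mu}{d\nu}(x)$ for $\nu$-a.e.\ $x$, which is the first identity.

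For the entropy identity I would apply the change-of-variables formula once more, this time integrating against $\mu$: observing that $\mu(\{\phi\circ T=0\})=\int_{\{\phi\circ T=0\}}(\phi\circ T)\,d\nu=0$, the quantity $\log(\phi\circ T)$ is $\mu$-a.s.\ finite, and
\[H(\mu|\nu)=\int_X\log\frac{d\mu}{d\nu}\,d\mu=\int_X\log(\phi\circ T)\,d\mu=\int_Y\log\phi\,d(T(\mu))=\int_Y\log\frac{dT(\mu)}{dT(\nu)}\,d(T(\mu))=H(T(\mu)|T(\nu)).\]
(If one drops the standing assumption $\mu\ll\nu$, the same reasoning shows $\mu\ll\nu$ if and only if $T(\mu)\ll T(\nu)$, so both entropies equal $+\infty$ together.)

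The only subtle point is the factorization step, i.e.\ that a $\sigma(T)$-measurable real-valued function genuinely factors through $T$; this is the classical Doob--Dynkin lemma, valid for arbitrary measurable spaces $(X,\mathcal A)$, $(Y,\mathcal B)$ since the codomain is $\mathbb R$, and the only care needed is to start from a $\sigma(T)$-measurable version of the Radon--Nikodym derivative. Everything else is bookkeeping with the image-measure formula.
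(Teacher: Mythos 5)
Your proof is correct. The paper states Lemma \ref{lem1.2} in the appendix as a well-known fact and supplies no proof of its own, so there is nothing to compare against; your argument --- reduce to the Doob--Dynkin factorization $\frac{d\mu}{d\nu}=\phi\circ T$ using $\mathcal A=\sigma(T)$, identify $\phi$ as $\frac{dT(\mu)}{dT(\nu)}$ via the image-measure change of variables, and transport the entropy integral --- is the standard one, and you correctly handle the two small subtleties (choosing a $\sigma(T)$-measurable version of the density, and the $\mu$-null set where $\phi\circ T=0$).
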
 
 
\begin{lem} \label{lem1.3}
    For a random vector $(V,W)$ we denote by $\mu_{V,W}$ its law and by $\mu_W^{V=v}$ a
regular conditional distribution of $W$ given $V=v$. Let $(X_1,\dots,X_n)$ and $(Y_1,\dots,Y_n)$ be random vectors. Then:
\begin{equation*} \begin{split}
&H(\mu_{X_1, \dots, X_n}|\mu _{Y_1,\dots,Y_n})= H(\mu_{X_1}|\mu_{Y_1})+\\ &\textstyle +\sum _{k=2}^n \int H\big( \mu _{X_k}^{X_1=x_1,\dots,X_{k-1}=x_{k-1}}|\mu _{Y_k}^{Y_1=x_1,\dots,Y_{k-1}=x_{k-1}}\big)d\mu_{X_1, \dots, X_{k-1}}(x_1,\dots,x_{k-1}),
\end{split} \end{equation*}
which   we can also write as: 
\begin{equation*} \begin{split} 
&\textstyle H\big(\mathcal{L}((X_1, \dots,X_n))|\mathcal{L}((Y_1, \dots,Y_n))\big)=\\ &\textstyle H(\mathcal{L}(X_1)|\mathcal{L}(Y_1))
+\sum _{k=2}^n \int H\Big(\mathcal{L}(X_k|X_1=x_1, \dots , X_{k-1}=x_{k-1})\\&\textstyle | \,\mathcal{L}(Y_k|Y_1=x_1, \dots , Y_{k-1}=x_{k-1}) \Big) d\mathcal{L}(X_1, \dots,X_{k-1} )dx_1 \dots dx_{
k-1}.
\end{split}\end{equation*} 
\end{lem}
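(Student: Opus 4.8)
The plan is to reduce everything to the two-variable case and then induct on $n$. For two random vectors $(V,W)$ and $(V',W')$ with laws $\mu_{V,W},\mu_{V',W'}$, I claim
\[
H(\mu_{V,W}\,|\,\mu_{V',W'}) = H(\mu_V\,|\,\mu_{V'}) + \int H\big(\mu_W^{V=v}\,\big|\,\mu_{W'}^{V'=v}\big)\,d\mu_V(v).
\]
Granting this, the $n$-variable formula follows by induction on $n$ (the case $n=1$ being trivial): apply the displayed identity with $V:=(X_1,\dots,X_{n-1})$, $W:=X_n$, noting that then $\mu_W^{V=v}=\mu_{X_n}^{X_1=x_1,\dots,X_{n-1}=x_{n-1}}$ is exactly the $k=n$ conditional appearing in the statement; this peels off the last summand, and one then applies the inductive hypothesis to $H(\mu_{X_1,\dots,X_{n-1}}\,|\,\mu_{Y_1,\dots,Y_{n-1}})$.

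For the two-variable identity, suppose first $H(\mu_{V,W}\,|\,\mu_{V',W'})<\infty$, i.e.\ $\mu_{V,W}\ll\mu_{V',W'}$. Disintegrate both joint laws along the first coordinate: $\mu_{V,W}(dv,dw)=\mu_V(dv)\,\mu_W^{V=v}(dw)$ and similarly for the primed measures. From the absolute continuity of the joint laws one obtains $\mu_V\ll\mu_{V'}$, $\mu_W^{V=v}\ll\mu_{W'}^{V'=v}$ for $\mu_V$-a.e.\ $v$, and the factorization
\[
\frac{d\mu_{V,W}}{d\mu_{V',W'}}(v,w)=\frac{d\mu_V}{d\mu_{V'}}(v)\cdot\frac{d\mu_W^{V=v}}{d\mu_{W'}^{V'=v}}(w),\qquad \mu_{V,W}\text{-a.s.}
\]
Taking logarithms and integrating against $\mu_{V,W}$, the left-hand side is $H(\mu_{V,W}|\mu_{V',W'})$, the $\log\frac{d\mu_V}{d\mu_{V'}}$ term integrates (after marginalising out $w$) to $H(\mu_V|\mu_{V'})$, and the $\log\frac{d\mu_W^{V=v}}{d\mu_{W'}^{V'=v}}$ term integrates, by Tonelli and the disintegration, to $\int H(\mu_W^{V=v}|\mu_{W'}^{V'=v})\,d\mu_V(v)$. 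Splitting the integral of the sum is legitimate because the elementary bound $x\log x\ge -1/e$ makes the negative parts of both logarithms $\mu_{V,W}$-integrable (each with integral at most $1/e$).

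It remains to handle the case $H(\mu_{V,W}|\mu_{V',W'})=+\infty$ and to check the right-hand side is then also $+\infty$; recall that both summands there are non-negative. If $\mu_V\not\ll\mu_{V'}$, then $H(\mu_V|\mu_{V'})=+\infty$ and we are done. If $\mu_V\ll\mu_{V'}$ but $\mu_{V,W}\not\ll\mu_{V',W'}$, pick $A$ with $\mu_{V',W'}(A)=0<\mu_{V,W}(A)$; disintegrating, the $v$-sections $A_v$ satisfy $\mu_{W'}^{V'=v}(A_v)=0$ for $\mu_V$-a.e.\ $v$ while $\mu_W^{V=v}(A_v)>0$ on a set of positive $\mu_V$-measure, so $H(\mu_W^{V=v}|\mu_{W'}^{V'=v})=+\infty$ there and the integral diverges. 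Finally, if $\mu_{V,W}\ll\mu_{V',W'}$ yet the entropy integral is $+\infty$, the factorization above is still valid, and the computation of the previous paragraph exhibits $H(\mu_{V,W}|\mu_{V',W'})$ as the sum of the two non-negative quantities $H(\mu_V|\mu_{V'})$ and $\int H(\mu_W^{V=v}|\mu_{W'}^{V'=v})\,d\mu_V(v)$, so at least one of them is $+\infty$.

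I expect the finite case to be essentially the chain rule for Radon–Nikodym derivatives under disintegration and therefore routine; the care is needed in the infinite cases above, together with the measurability of $v\mapsto H(\mu_W^{V=v}|\mu_{W'}^{V'=v})$, which follows from the regularity of the conditional kernels — for instance via the Donsker–Varadhan variational representation of relative entropy, which realises this map as a countable supremum of measurable functions of $v$.
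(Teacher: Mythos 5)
Your argument is correct. Note that the paper itself offers no proof of this lemma: it is stated in Appendix \ref{app:A} as a well-known fact, so there is nothing to compare against. What you give is the standard disintegration/chain-rule proof — factorize the Radon--Nikodym derivative along the first coordinate, split the logarithm using the uniform bound $x\log x\ge -1/e$ on the negative parts, and induct — and your treatment of the degenerate cases (failure of absolute continuity at the marginal or joint level, and infinite entropy with absolute continuity) together with the measurability of $v\mapsto H\big(\mu_W^{V=v}\,|\,\mu_{W'}^{V'=v}\big)$ via a countable variational representation is exactly the care this statement requires.
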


\begin{lem}
     Let $\mu, \nu$ be probability measures on $(X,\mathcal{A})$ s.t.\ $H(\mu|\nu)<\infty$. Let $\mathcal{B}\subset \mathcal{A}$ be a sub-sigma algebra and $\mu|_{\mathcal{B}},\nu|_{\mathcal{B}}$ the restrictions of $\mu$ and $\nu$ to $\mathcal{B}$.Then\\
    (i) \[\textstyle \frac{d\mu|_{\mathcal{B}}}{d\nu|_{\mathcal{B}}}=\mathbb{E}_{\nu}\Big[(\frac{d\mu}{d\nu})|\mathcal{B}\Big].\]
    (ii) \[H(\mu|_{\mathcal{B}}|\nu |_{\mathcal{B}}) \leq H(\mu |\nu ).\]
\end{lem}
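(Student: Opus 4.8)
The plan is to obtain part (i) directly from the defining property of the Radon--Nikodym derivative, and then to derive part (ii) from (i) by invoking the convexity of $\phi(x):=x\log x$ together with the conditional Jensen inequality.

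First I would record that $H(\mu|\nu)<\infty$ forces $\mu\ll\nu$; set $f:=d\mu/d\nu$ and $g:=\mathbb{E}_{\nu}[f\,|\,\mathcal{B}]$, a nonnegative $\mathcal{B}$-measurable function. Since any $\nu$-null set lying in $\mathcal{B}$ is a fortiori $\mu$-null, we have $\mu|_{\mathcal{B}}\ll\nu|_{\mathcal{B}}$. For every $B\in\mathcal{B}$ the definition of conditional expectation gives
\[
\int_B g\,d\nu=\int_B f\,d\nu=\mu(B)=\mu|_{\mathcal{B}}(B);
\]
as $g$ is $\mathcal{B}$-measurable and $B\in\mathcal{B}$ is arbitrary, the uniqueness statement in the Radon--Nikodym theorem yields $g=d\mu|_{\mathcal{B}}/d\nu|_{\mathcal{B}}$, $\nu|_{\mathcal{B}}$-a.s., which is exactly (i). A consequence I would isolate for later use is the change-of-variables identity $\int h\,d\mu=\int hg\,d\nu$ for every nonnegative (or $\mu$-integrable) $\mathcal{B}$-measurable $h$: indeed $\mu$ and $\mu|_{\mathcal{B}}$ agree on $\mathcal{B}$, so $\int h\,d\mu=\int h\,d\mu|_{\mathcal{B}}=\int hg\,d\nu|_{\mathcal{B}}=\int hg\,d\nu$ by (i).

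For (ii) I would extend $\phi$ by $\phi(0):=0$, so that $\phi$ is convex on $[0,\infty)$ and bounded below by $-1/e$. Applying the change-of-variables identity with $h=\log g$ gives $H(\mu|_{\mathcal{B}}|\nu|_{\mathcal{B}})=\int\log g\,d\mu|_{\mathcal{B}}=\int g\log g\,d\nu=\int\phi(g)\,d\nu$, whereas $H(\mu|\nu)=\int\phi(f)\,d\nu$. Conditional Jensen applied to $\phi$ yields $\phi(g)=\phi(\mathbb{E}_{\nu}[f|\mathcal{B}])\le\mathbb{E}_{\nu}[\phi(f)\,|\,\mathcal{B}]$ $\nu$-a.s.; integrating against $\nu$ and using the tower property gives $\int\phi(g)\,d\nu\le\int\phi(f)\,d\nu$, i.e.\ precisely (ii).

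The only delicate point — and hence the main (and rather minor) obstacle — is the integrability bookkeeping around $x\log x$: one must note that $\phi(f)^-\le 1/e$ is automatically $\nu$-integrable, that the hypothesis $H(\mu|\nu)<\infty$ controls $\phi(f)^+$, and that these same bounds legitimize the use of conditional Jensen for $\phi$ (a convex function whose negative part is bounded, so that $\mathbb{E}_{\nu}[\phi(f)\,|\,\mathcal{B}]$ is well defined and the inequality may be integrated). Everything else is the elementary change of variables on $\mathcal{B}$-measurable functions, which is immediate from (i).
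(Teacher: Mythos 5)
Your proof is correct. The paper states this lemma in Appendix~\ref{app:A} as a well-known fact and gives no proof of its own, so there is nothing to compare against; your argument is the standard one, and it is complete: part (i) follows from the defining property of conditional expectation plus Radon--Nikodym uniqueness, and part (ii) from conditional Jensen applied to $\phi(x)=x\log x$. You also correctly handle the only genuinely delicate point, namely that $\phi(f)^-\le 1/e$ makes $\mathbb{E}_{\nu}[\phi(f)\,|\,\mathcal{B}]$ well defined and, together with $H(\mu|\nu)<\infty$, justifies both the conditional Jensen step and the change of variables with the sign-indefinite integrand $\log g$.
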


\section{Pending Proofs and Missing Arguments}
\label{app:B}

\begin{proof}[Proof of Remark \ref{ex_extra}] {Let $B_t^l \sim \mathcal{N}_l(\mathbf{1}, t \mathcal{I})$ be the $l$-dimensional Brownian motion at time $t$ starting from $B_0=\mathbf{1}$, and let $b_t$ its density function. Let $M_t^{\Gamma}$, defined as $dM_t^{\Gamma}:=diag(M_t^{\Gamma})\Gamma dB_t$, be a $\R ^l$-valued martingale Black-Sholes model with parameter $\Gamma \in \R^{l \times l}$ such that $M_0^{\Gamma}=\mathbf{1}$ and $m_t$ indicates its density function. In particular, $M_t^{\Gamma}$ is distributed as a lognormal variable with mean $\mu $ and variance $\Sigma$ where $\mu_i= -\frac{t}{2} \sum_{k=1}^l \Gamma_{ik}^2$ and $\Sigma_{ij}=t(\Gamma \Gamma^T)_{ij}$ for each  $i,j=1, \dots, l$.  \\
We look now at the logarithms of the respective densities $\forall x \in \R^l$: 
\begin{equation*} \begin{split} 
\log(m_t(x))&=\textstyle  \log \Big( \frac{1}{\sqrt{(2 \pi)^l \det \Sigma}} \prod _{i=1}^l \frac{1}{x_i} \exp \big( -\frac{1}{2}(\log x-\mu )^T \Sigma ^{-1}(\log x-\mu )\big)\Big)
\\&=\textstyle 
-\frac{1}{2} \log (2\pi)^l \det \Sigma - \sum _{i=1}^l \log x_i -\frac{1}{2}\big((\log x-\mu)^T\Sigma^{-1} (\log x - \mu )\big).
\end{split}
\end{equation*}
The last term, as we already discussed so far, becomes:
\begin{equation*} \begin{split} 
&\textstyle -\frac{1}{2} tr\big( \Sigma^{-1}((\log x - \mu )(\log x - \mu )^T)\big)\\&\textstyle =-\frac{1}{2} tr\big( \Sigma^{-1}( \log x \log x ^T - 2\log x \mu^T + \mu \mu^T)).
\end{split}
\end{equation*}
While on the other hand, 
   \begin{equation*} \begin{split} 
\log(b(x))&\textstyle =\log \Big( \frac{1}{\sqrt{(2\pi)^l \det (t \mathcal{I})}} \exp \big(-\frac{1}{2}(x-\mathbf{1})^T (t \mathcal{I})^{-1}(x-\mathbf{1})\big)\Big)\\&\textstyle =-\frac{1}{2}\log (2\pi )^l \det (t \mathcal{I}) -\frac{1}{2}(x-\mathbf{1})^T (t \mathcal{I})^{-1}(x-\mathbf{1})\end{split}
\end{equation*} 
and, by some linear algebra already discussed, the last term becomes: \begin{equation*} \begin{split} 
\textstyle -\frac{1}{2} tr \big( (t \mathcal{I})^{-1}((x-\mathbf{1}) (x-\mathbf{1})^T ) \big)=-\frac{1}{2} tr \big( \frac{1}{t} \mathcal{I}(xx^T -2 x \mathbf{1}^T + \mathbf{1}\mathbf{1}^T)\big).
\end{split}
\end{equation*}
Before computing their expectations we look at the expectation of $M_t^{\Gamma}$: 
\begin{equation*} \begin{split} 
\big(\E[M_t^{\Gamma}]\big)_i= e ^{\mu_i+ \frac{1}{2}\Sigma_{ii}}&=e^{-\frac{t}{2} \sum_{k=1}^l \Gamma_{ik}^2+ \frac{1}{2} t(\Gamma \Gamma^T)_{ii}}\\&=e^{-\frac{t}{2} \sum_{k=1}^l \Gamma_{ik}^2+ \frac{t}{2} \sum_{k=1}^l \Gamma_{ik}^2}=1
\end{split}
\end{equation*}
as $i=1, \dots l$, and the variance is:  
\begin{equation*} \begin{split} 
\big(Var[M_t^{\Gamma}]\big)_{ij}&= \textstyle \exp \Big(\mu_i+\mu_j+ \frac{1}{2}(\Sigma_{ii}+\Sigma_{jj})\Big)(e^{\Sigma_{ij}}-1)\\&= \textstyle \exp \Big(-\frac{t}{2} \sum_{k=1}^l \Gamma_{ik}^2 -\frac{t}{2} \sum_{k=1}^l \Gamma_{jk}^2 + \frac{1}{2}(t(\Gamma \Gamma ^T)_{ii} +t(\Gamma \Gamma ^T)_{jj})\Big)(e^{\Sigma_{ij}}-1)\\&\textstyle =\exp \Big(-\frac{t}{2} \sum_{k=1}^l \Gamma_{ik}^2 -\frac{t}{2} \sum_{k=1}^l \Gamma_{jk}^2 + \frac{t}{2}(\sum_{k=1}^l \Gamma_{ik}^2 +\sum_{k=1}^l \Gamma_{jk}^2)\Big)(e^{\Sigma_{ij}}-1)\\&\textstyle = e^{\Sigma_{ij}}-1=e^{t\sum_{k=1}^l \Gamma_{ik} \Gamma_{jk}}-1
\end{split}
\end{equation*}
as $i,j=1, \dots l $, while the second moment is: 
\begin{equation*} \begin{split} 
\big(\E_{M_t^{\Gamma}}[M_t^{\Gamma} (M_t^{\Gamma})^T]\big)_{ij}&=(\E[M_t^{\Gamma}]\E[M_t^{\Gamma}]^T)_{ij}+(Var[M_t^{\Gamma}])_{ij}\\&= (\mathbf{1}\mathbf{1}^T)_{ij}+(e^{\Sigma_{ij}}-1)=e^{t\sum_{k=1}^l \Gamma_{ik}\Gamma_{jk}}.
\end{split}
\end{equation*}
Moreover, we recall that $\log M_t^{\Gamma} \sim \mathcal{N}_l (\mu, \Sigma)$. Now we can compute the expectations of the densities. 
\begin{equation*} \begin{split} 
\E_{M_t^{\Gamma}}\big[\log m(x)\big]&\textstyle = -\frac{1}{2} \log (2\pi)^l \det \Sigma - \sum_{i=1}^l \mu_i - \frac{1}{2} tr (\Sigma^{-1}(\Sigma+\mu \mu^T -2 \mu \mu ^T +\mu\mu^T))\\&\textstyle = -\frac{1}{2} \log (2\pi)^l \det \Sigma -  \sum_{i=1}^l \mu_i - \frac{1}{2}  tr (\mathcal{I})\\&\textstyle = -\frac{1}{2} \log (2\pi)^l \det \Sigma -  \sum_{i=1}^l \mu_i - \frac{1}{2} l
\end{split}
\end{equation*}
which, recalling that $\mu_i= -\frac{t}{2} \sum_{k=1}^l \Gamma_{ij}^2$ and $\Sigma_{ij}=t(\Gamma \Gamma^T)_{ij}$  $ \forall i,j=1, \dots, l$, becomes:
\begin{equation*} \begin{split}
&=\textstyle -\frac{l}{2}\log 2 \pi t -  \frac{1}{2} \log (\det (\Gamma \Gamma^T) ) - \sum_{i=1}^l \mu_i -\frac{1}{2}l 
\\&\textstyle =-\frac{l}{2}\log 2 \pi t-  \frac{1}{2} \log (\det (\Gamma \Gamma^T) ) +\frac{t}{2} \sum_{i=1}^l \sum_{k=1}^l \Gamma_{ik}^2  - \frac{1}{2} l.
\end{split}
\end{equation*}
On the other hand,
\begin{equation*} \begin{split} 
\E_{M_t^{\Gamma}}\big[ \log b(x) \big]&\textstyle =-\frac{l}{2} \log 2 \pi t - \frac{1}{2}tr(\frac{1}{t } \mathcal{I}(e^{\Sigma}  -\mathbf{1}\mathbf{1}^T))\\&\textstyle =
-\frac{l}{2} \log 2 \pi t - \frac{1}{2}tr(\frac{1}{t } \mathcal{I}e^{\Sigma} )+\frac{1}{2}(\mathbf{1}^T\frac{1}{t} \mathcal{I}\mathbf{1})\\&\textstyle =-\frac{l}{2} \log 2 \pi t - \frac{1}{2t} \sum_{i=1}^{l}e^{t\sum
_{k=1}^l\Gamma_{ik}^2}+ \frac{l}{2t}.
\end{split}
\end{equation*}
Finally, we get the desired relative entropy:
\begin{equation*} \begin{split} 
&\textstyle H(\mathcal{L}(M_t^{\Gamma})| \mathcal{L}(B_t^l))= \E_{M_t^{\Gamma}}\big[\log m(x)\big] -E_{M_t^{\Gamma}}\big[ \log b(x) \big] \\&\textstyle = -  \frac{1}{2} \log (\det (\Gamma \Gamma^T) ) +\frac{t}{2}  \sum_{i=1}^l \sum_{k=1}^l \Gamma_{ik}^2  - \frac{1}{2} l 
+ \frac{1}{2t} \sum_{i=1}^{l}e^{t\sum
_{k=1}^l\Gamma_{ik}^2}- \frac{l}{2t} \\&\textstyle =\frac{1}{2}\Big(-\log (\det (\Gamma \Gamma^T) ) + t \sum_{i=1}^l \sum_{k=1}^l \Gamma_{ik}^2 -l + \sum_{i=1}^l \frac{e^{t\sum
_{k=1}^l\Gamma_{ik}^2}-1 }{t} \Big).
\end{split}
\end{equation*}}
\end{proof}

\begin{proof}[Proof of Lemma \ref{F_l} ]
Clearly
\begin{equation*} \begin{split}
            F_l(\alpha \mathcal{I})&\textstyle =\frac{1}{2}\big(tr(\alpha \mathcal{I})-l-\log ( \det( \alpha \mathcal{I}))\big)       =\frac{1}{2}(l\alpha -l-l \log \alpha)\\
            &=\frac{l}{2}(\alpha -1-l \log \alpha)
            =lF_1(\alpha).
\end{split}\end{equation*}
For the second point:
\begin{equation*} \begin{split}
    F_l(D)&\textstyle =\frac{1}{2}\big(tr(D)-l-\log ( \det( D))\big)
    =\frac{1}{2}(\sum_{i=1}^l \sigma _i ^2-\sum_{i=1}^l 1- \sum _{i=1}^l \log \sigma _i^2)\\
    &\textstyle =\sum _{i=1}^l \frac{1}{2}(\sigma_i^2 - 1 -\log \sigma_i^2)
    = \sum _{i=1}^l F_1(\sigma _i^2 ).
\end{split}\end{equation*}
For the third point:
\begin{equation*} \begin{split}
    F_l(\Sigma)&\textstyle =\frac{1}{2}\big(tr(\Sigma)-l-\log(det \Sigma)\big)
    =\frac{1}{2}(tr(D)-l-\log(det D))\\
    &\textstyle =F_l(D)=\sum_{\sigma^2\in eigenvalues\, of \, \Sigma} F_1(\sigma^2).
\end{split}\end{equation*}  
For the fourth point, first we settle convexity: We need to prove that for $\lambda \in [0,1]$ and $X,Y \in \mathbb{R}^{l \times l}$ symmetric positive definite, we have $\lambda F_l(X)+(1-\lambda)F_l(Y)\geq F_l(\lambda X +(1-\lambda)Y)$. Clearly this boils down to showing:
    \[\log(\det (\lambda X +(1-\lambda)Y)) \geq \lambda \log (\det X) +(1-\lambda)\log (\det Y).\] This is well-known, but we give an argument here for the convenience of the reader.
    Passing to the exponentials, this becomes: \[ \det (\lambda X + (1-\lambda )Y)  \geq (\det X)^{\lambda}(\det Y)^{1-\lambda}.\]
    Since, without loss of generality, $X$ is positive definite such that $X^{1/2}X^{1/2}=X$ the left hand side can be written as: \[ \det (X^{1/2}(\lambda \mathcal{I} + (1-\lambda) X^{-1/2} YX^{-1/2})X^{1/2})=\det X \det (\lambda \mathcal{I}+(1-\lambda ) Z)\] for  $Z:= X^{-1/2}YX^{-1/2}$. While the right hand side:
    \[\det X^{1/2} (\det X^{-1/2}YX^{-1/2} )^{1-\lambda}\det X^{1/2}= \det X^{1/2} (\det Z)^{1-\lambda}\det X^{1/2} .\]
    So we need to show that \[ \det(\lambda \mathcal{I}+(1-\lambda)Z)\geq (\det Z)^{1-\lambda}.\]
    Further, we notice that $Z$ is clearly symmetric and also positive definite. Indeed, the smallest eigenvalue of $Z$ is given by \begin{equation*}
        \begin{split}
           \textstyle  \min _{|\mu|=1}\mu ^T(X^{-1/2}YX^{-1/2}) \mu &\textstyle = \min _{|\mu|=1}(X^{-1/2} \mu)^TY(X^{-1/2} \mu)\\&\textstyle =\min _{|\mu|=1} |X^{-1/2} \mu|^2\frac{(X^{-1/2} \mu)^T}{|X^{-1/2} \mu|}Y \frac{(X^{-1/2} \mu)}{|X^{-1/2} \mu|}.
            \end{split}
    \end{equation*}
 Now considering 
    the eigenvalues of $Z$, $(\gamma_i)_{i=1, \dots, l}$,  it remains to show: 
    \[\textstyle \prod_{i=1}^l(\lambda +(1-\lambda)\gamma_i) \geq \prod_{i=1}^l \gamma_i^{1-\lambda},\] 
    i.e.\
    \[\textstyle \sum_{i=1}^l \log (\lambda+(1-\lambda)\gamma_i) \geq (1-\lambda)\sum_{i=1}^l \log \gamma_i \]
    which holds by the concavity of the logarithm term by term. 

    Now we observe that $F_l(X)\geq 0$ and $F_l(X)= 0$ iff $X=\mathcal I$. Indeed, if $(\sigma_i)_i$ are the eigenvalues of $X$, then we know that $F_l(X)=\sum_i F_1(\sigma_i^2)$. The result follows since clearly $F_1(\sigma^2)\geq 0$ and $F_1(\sigma^2) =0$ iff $\sigma^2=1$.
\end{proof}

\end{document}